\documentclass[12pt]{amsart}  

\usepackage{amscd} 
\usepackage{mathrsfs}
\usepackage{xcolor}
\textwidth=165mm \textheight=220mm \hoffset=-20mm \voffset=-5mm
\usepackage{ amssymb}

\newtheorem{theorem}{Theorem}[section]
\newtheorem{lemma}[theorem]{Lemma}
\newtheorem{corollary}[theorem]{Corollary} 
\theoremstyle{definition}
\newtheorem{definition}[theorem]{Definition}

\newtheorem{remark}[theorem]{Remark} 
\numberwithin{equation}{section}
\newtheorem{notation}[theorem]{Notation}


\def\z*{z^{*}}

\def\B{\mathscr B}
\def\M{\mathscr M}

\def\uno{\mathsf 1}
\def\H{\mathsf H}
\def\K{\mathsf K}
\def\X{\mathsf X}

\def\Y{\mathsf Y}

\def\C{\mathscr C}

\def\dom{\text{\rm dom}}
\def\ran{\text{\rm ran}}
\def\supp{\text{\rm supp}}

\def\RE{\mathbb R}
\def\CO{{\mathbb C}}

\def\SL{S\! L}
\def\DL{D\! L}
\def\U{U}

\def\ph*{\phi_\star}

\def\be{\begin{equation}}
\def\ee{\end{equation}}

\def\-{{\rm in}}
\def\+{{\rm ex}}
\def\sgn{\text{sgn}}
\begin{document}

\title[Inverse Scattering for the Laplace operator]
{Inverse Scattering for the Laplace operator with boundary conditions on Lipschitz surfaces }
\author{Andrea Mantile}
\author{Andrea Posilicano}

\address{Laboratoire de Math\'{e}matiques, Universit\'{e} de Reims -
FR3399 CNRS, Moulin de la Housse BP 1039, 51687 Reims, France}
\address{DiSAT, Sezione di Matematica, Universit\`a dell'Insubria, via Valleggio 11, I-22100
Como, Italy}
\email{andrea.mantile@univ-reims.fr}
\email{andrea.posilicano@uninsubria.it}

\begin{abstract} We provide a general scheme, in the combined frameworks of Mathematical Scattering Theory and Factorization Method, for inverse scattering for the couple of self-adjoint operators $(\widetilde\Delta,\Delta)$, where $\Delta$ is the free Laplacian in $L^{2}(\RE^{3})$ and $\widetilde\Delta$ is one of its singular perturbations, i.e., such that the set $\{u\in H^{2}(\RE^{3})\cap \dom(\widetilde\Delta)\, :\, \Delta u=\widetilde\Delta u\}$ is dense. Typically $\widetilde\Delta$ corresponds to a self-adjoint realization of the Laplace operator with some kind of boundary conditions imposed on a null subset;  in particular our results apply to standard, either separating or semi-transparent, boundary conditions at $\Gamma=\partial\Omega$, where  $\Omega\subset\RE^{3}$ is a bounded Lipschitz domain. Similar results hold in the case  the boundary conditions are assigned only on $\Sigma\subset\Gamma$, a relatively open subset with a Lipschitz boundary. We show that either the obstacle $\Omega$ or the screen $\Sigma$ are determined by the knowledge of the Scattering Matrix, equivalently of the Far Field Operator, at a single frequency. 
\end{abstract}

\maketitle
\section{Introduction}
In the recent paper \cite{JMPA} (also see \cite{JST} for the case of smooth boundaries and \cite{BMN} for similar results in the case of smooth boundaries and under additional trace-class conditions) we obtained a representation formula for the scattering matrix $S^{\Lambda}_{\lambda}:L^{2}({\mathbb S}^{2})\to L^{2}({\mathbb S}^{2})$ relative to the scattering couple $(\Delta,\Delta_{\Lambda})$, where $\Delta$ is the self-adjoint free Laplacian in $L^{2}(\RE^{3})$ and $\Delta_{\Lambda}$ is a self-adjoint realization of the Laplacian with boundary conditions at $\Gamma$, the Lipschitz boundary of the bounded domain $\Omega\subset\RE^{3}$. Here $\Lambda:z\mapsto\Lambda_{z}$ is an operator-valued map which univocally defines $\Delta_{\Lambda}$ and fixes the boundary conditions realized by the corresponding operator (see Sections \ref{appli} and \ref{appli-screens} below for various explicit examples). Our representation formula gives $S_{\lambda}^{\Lambda}=\uno-2\pi iL_{\lambda}\Lambda_{\lambda}^{+}L^{*}_{\lambda}$, where $\Lambda_{\lambda}^{+}$ is the limit of $\Lambda_{\lambda+i\epsilon}$ as $\epsilon\downarrow 0$ (which, under suitable hypotheses,  exists  in operator norm through a Limiting Absorption Principle, see \cite{JMPA}), and $L_{\lambda}$ is defined in term of the trace (either Dirichlet or Neumann or both) at the boundary $\Gamma$ of the free waves with  wavenumber $|\lambda|^{1/2}$. Introducing the Far-Field operator $F^{\Lambda}_{\lambda}:=\frac1{2\pi i}(\uno-S^{\Lambda}_{\lambda})$ (see \cite[relation (1.31)]{KG}), one gets $F^{\Lambda}_{\lambda}=L_{\lambda}\Lambda_{\lambda}^{+}L^{*}_{\lambda}$; such a factorized form  suggests to study the inverse scattering problem (concerning the reconstruction of the shape of $\Omega$ by the knowledge of the scattering data at a fixed frequency) by means of Kirsch's Factorization Method (see \cite{KG} and references therein). 
Our result is the following (see Theorem \ref{cor}  for the complete statement): let $\Lambda^{+}_{\lambda}=(M^{+}_{\lambda})^{-1}$, where the bounded bijective operator $M^{+}_{\lambda}$ has the decomposition $M_{\lambda}^{+}=M_{\circ}+K_{\lambda}$, $M_{\circ}$ sign-definite and $K_{\lambda}$ compact; then  
$$
x\in\Omega\quad\iff\inf_{\substack{\psi\in L^{2}({\mathbb S}^{2})\\ \langle\psi,\phi^{x}_{\lambda}\rangle_{L^{2}({\mathbb S}^{2})}=1}}\left|\langle\psi,F^{\Lambda}_{\lambda}\psi\rangle_{L^{2}({\mathbb S}^{2})}\right|>0\iff \sum_{k=1}^{\infty}\frac{|\langle\phi^{x}_{\lambda},\psi^{\Lambda}_{\lambda,k}\rangle_{L^{2}({\mathbb S}^{2})}|^{2}}{|z^{\Lambda}_{\lambda,k}|}<+\infty\,,
$$
where $\phi^{x}_{\lambda}(\xi):=e^{i\,|\lambda|^{1/2}\xi\cdot x}$ and the sequences $\{z^{\Lambda}_{\lambda,k}\}_{1}^{\infty}\subset\CO\backslash\{0\}$ and $\{\psi^{\Lambda}_{\lambda,k}\}_{1}^{\infty}\subset L^{2}({\mathbb S}^{2})$ provide the spectral resolution of the compact normal operator $F^{\Lambda}_{\lambda}$. While such a result conforms to the standard ones (the inf-criterion and the $(F^{*}F)^{1/4}$-method) provided in \cite[Section 1.4]{KG}, its advantage is due to the fact that 
we use a factorization where all the informations regarding the boundary conditions are encoded in the operator $\Lambda_{\lambda}^{+}$, whereas $L_{\lambda}$, for which one needs to characterize the  range, is model-independent; this enhances the flexibility of our approach. Moreover, with a minimal effort (which in essence consists in compressing the operator $\Lambda_z$ onto subspaces of functions with supports contained in $\Sigma\subset\Gamma$) one gets similar results in the case the boundary conditions are imposed not on the whole $\Gamma$ but only on a relatively open subset $\Sigma$ with a Lipschitz boundary. In this case the result is of the same kind, only the family of testing functions changes (see Theorem \ref{screen} for the precise statement): let $\Sigma_{\circ}\subset\Gamma_{\!\circ}$, $\Gamma_{\!\circ}$ a Lipschitz boundary; then
\begin{align*}
\Sigma_{\circ}\subset\Sigma\iff\inf_{\substack{\psi\in L^{2}({\mathbb S}^{2})\\ \langle\psi,\phi^{\Sigma_{\circ}}_{\lambda}\rangle_{L^{2}({\mathbb S}^{2})}=1}}\left|\langle\psi,F^{\Lambda}_{\lambda}\psi\rangle_{L^{2}({\mathbb S}^{2})}\right|>0
\iff\sum_{k=1}^{\infty}\frac{|\langle\phi^{\Sigma_{\circ}}_{\lambda},\psi^{\Lambda}_{\lambda,k}\rangle_{L^{2}({\mathbb S}^{2})}|^{2}}{|z^{\Lambda}_{\lambda,k}|}<+\infty\,,
\end{align*}
where
$\phi^{\Sigma_{\circ}}_{\lambda}(\xi):=\int_{\Sigma_{\circ}}\phi_{\lambda}^{x}(\xi)\,d\sigma_{\Gamma_{\!\circ}}(x)$. \par
We provide several examples where Theorems \ref{cor} and \ref{screen} apply. In particular, we consider obstacles and screens reconstruction for the following boundary conditions:
\par\noindent $\bullet$
Dirichlet  $\gamma_{0}u=0$ (see Subsections \ref{Dir-ob} and \ref{Ds});
\par\noindent $\bullet$
Neumann $\gamma_{1}u=0$ (see Subsections \ref{Neu-ob} and \ref{Ns});
\par\noindent $\bullet$ 
semi-transparent  $$\begin{cases}
\alpha\gamma_{0}u=[\gamma_{1}]u\,,\\
[\gamma_{0}]u=0\,,\end{cases}$$ 
either $\alpha>0$ or $\alpha<0$ (see Subsections \ref{delta-ob} and \ref{delta-s}); 
\par\noindent $\bullet$
semi-transparent $$\begin{cases}
\gamma_{1}u=\theta[\gamma_{0}]u\,,\\
[\gamma_{1}]u=0\,,\end{cases}
$$ 
$\theta$ real-valued (see Subsections \ref{delta'-ob} and \ref{delta'-s});
\par\noindent $\bullet$
local of the kind 
\be\label{locB}\begin{cases}
\gamma_{0}u=b_{11}[\gamma_{0}]u+b_{12}[\gamma_{1}]u\,,\quad\\
\gamma_{1}u=b^{*}_{12}[\gamma_{0}]u+b_{22}[\gamma_{1}]u\,,
\end{cases}
\ee 
$b_{11}<0$, $b_{22}$ real-valued (see Subsections \ref{rob-ob} and \ref{rob-s}).
\par
A huge literature is devoted to obstacle reconstruction from scattering data; we just recall some papers where the Factorization Method is used in connection with the models here treated. Dirichlet and Neumann obstacles have been considered in \cite{Kirsch98} (see also \cite[Chap. 1]{KG}); Dirichlet screens have been studied firstly, in a 2-dimensional setting, in \cite{Kirsch00}.  Semi-transparent interface conditions appear, apart in quantum mechanical models (see, e.g., \cite{BEKS}, \cite{BLL} and references therein), in  connections with acoustic models with gradient singularities, see \cite{JDE2}. Conditions of the type $\alpha\gamma_{0}=[\gamma_{1}]u$ appear in \cite{KK} and \cite{BL13} in a non self-adjoint setting (i.e. when $\alpha$ is complex-valued): this compels the use of different data operators. 
An appropriate choice of the functions $b_{ij}$ in \eqref{locB} gives the classical Robin boundary conditions; the latter have been considered in \cite{Grinb06} (see also \cite[Chap. 2]{KG}) and \cite{BH13}. In these papers, as in the previous case, a non self-adjoint setting is used and different data operators enters in the reconstruction formulae.\par
In this paper, as regards scattering, we use a quantum mechanics point of view (see Section \ref{WaveOp}); however, as recalled in Section \ref{swe} below (see also \cite{W} for the case of Neumann boundary conditions), the scattering theory for Schr\"odinger-type equations is equivalent to the one for wave-type equations. Hence our reconstruction results apply to diffusions of both classical and quantum waves.\par
In order to simplify the exposition, our results are stated in dimension $d=3$; however they hold in any dimension $d\ge 2$. Finally, we presume that, by the same techniques, our approach can be extended to the case in which  the Laplace operator is replaced by a more general 2nd order elliptic differential operator.    
\vskip10pt\noindent
{\bf Acknowledgements.} The authors are indebted to Mourad Sini for the 
fruitful discussions which largely inspired this work.

\section{Notations and preliminaries.}
\subsection{Notations.}{\ }
\vskip5pt \noindent $\bullet$  $\|\cdot\|_{\X}$ denotes the norm on the complex Banach space $\X$; in case $\X$ is a Hilbert space, $\langle\cdot,\cdot\rangle_{\X}$ denotes the (conjugate-linear w.r.t. the first argument) scalar product.
\vskip5pt\noindent $\bullet$ $\langle\cdot,\cdot\rangle_{\X^{*},\X}$ denotes the duality (assumed to be conjugate-linear w.r.t. the first argument) between the dual couple $(\X^{*},\X)$.
\vskip5pt\noindent $\bullet$ $L^{*}:\dom(L^{*})\subseteq \Y^{*}\to \X^{*}$ denotes the dual of the densely defined linear operator $L:\dom(L)\subseteq \X\to \Y$; in a Hilbert spaces setting $L^{*}$ denotes the adjoint operator.
\vskip5pt\noindent $\bullet$ $\rho(A)$ and $\sigma(A)$ denote the resolvent set and the spectrum of the self-adjoint operator $A$; $\sigma_{\rm p}(A)$, $\sigma_{\rm pp}(A)$, $\sigma_{\rm ac}(A)$,  $\sigma_{\rm sc}(A)$, $\sigma_{\rm ess}(A)$, $\sigma_{\rm disc}(A)$, denote the point, pure point, absolutely continuous, singular continuous, essential and discrete spectra.
\vskip5pt\noindent $\bullet$ $\B(\X,\Y)$, $\B(\X)\equiv \B(\X,\X)$, denote the Banach space of bounded linear operator on the Banach space $\X$ to the Banach space $\Y$; ${\|}\cdot {\|}_{\X,\Y}$ denotes the corresponding norm.
\vskip5pt\noindent $\bullet$ $\X\hookrightarrow \Y$ means that $\X\subseteq\Y$ and for any $u\in \X$ there exists $c>0$ such that 
$\|u\|_{\Y}\le c\,\|u\|_{\X}$; we say that $\X$ is continuously embedded into $\Y$. 
\vskip5pt\noindent $\bullet$ $u|\Gamma$ denotes the restriction of the function $u$ to the set $\Gamma$; $L|{\mathsf V}$ denotes the restriction of the linear operator $L$ to the subspace ${\mathsf V}$. 
\vskip5pt\noindent $\bullet$ $H^{s}(\RE^{3})$, $s\in\RE$, denotes the scale of Hilbert space of Sobolev functions on $\RE^{3}$, i.e. $u\in H^{s}(\RE^{3})$ if and only if $k\mapsto (1+\|k\|^{2})^{s/2}\,\widehat u(k)$ is square integrable, $\widehat u$ denoting Fourier transform.
\vskip5pt\noindent $\bullet$ $\Omega\equiv\Omega_{\-}\subset\RE^{3}$ denotes a bounded open set with a Lipschitz boundary $\Gamma$; $\Omega_{\+}:=\RE^{3}\backslash\overline\Omega$.
\vskip5pt\noindent $\bullet$ $\gamma_{0}$ and $\gamma_{1}$ denote the Dirichlet and Neumann traces on the boundary $\Gamma$.
\vskip5pt\noindent $\bullet$ $\Delta^{D}_{\Omega_{\-/\+}}$ denotes the self-adjoint operator in $L^{2}(\Omega_{\-/\+})$ representing the Laplace operator with homogeneous Dirichlet boundary conditions at $\Gamma$.
\vskip5pt\noindent $\bullet$ $\Delta^{N}_{\Omega_{\-/\+}}$ denotes the self-adjoint operator in $L^{2}(\Omega_{\-/\+})$ representing the Laplace operator with homogeneous Neumann boundary conditions at $\Gamma$.
\vskip5pt\noindent $\bullet$ $H^{s}(\Omega_{\-/\+})$, $s\in\RE$, denotes the scale of Hilbert space of Sobolev functions on $\Omega_{\-/\+}$.
\vskip5pt\noindent $\bullet$ $\C^{\kappa}(\Gamma)$ denotes the space of H\"older-continuous functions of order $\kappa$ on $\Gamma$. 
\vskip5pt\noindent $\bullet$ $H^{s}(\Gamma)$, $|s|\le 1$, denotes the Hilbert space of Sobolev functions of order $s$ on $\Gamma$.  
\vskip5pt\noindent $\bullet$ $\M(H^{s}(\Gamma),H^{t}(\Gamma))$, $\M(H^{s}(\Gamma),H^{s}(\Gamma))\equiv \M(H^{s}(\Gamma))$,  denotes the space of Sobolev multipliers from $H^{s}(\Gamma)$ to $H^{t}(\Gamma)$.
\vskip5pt\noindent $\bullet$ $s_{\sharp}$, $\sharp=D,N$, denote the indices 
$\ s_{D}=1/2$, $\ s_{N}=-1/2$.
\vskip5pt\noindent $\bullet$ $\varphi_{n}\rightharpoonup\varphi$ means that the sequence $\{\varphi_{n}\}_{1}^{\infty}$ weakly converges to $\varphi$.
\vskip5pt\noindent $\bullet$  ${\mathsf V}^{\perp}\subseteq\X^{*}$, denotes the annihilator ${\mathsf V}^{\perp}=\{x^{*}\in\X^{*}:\langle x^{*},x\rangle_{\X^{*}\!,\X}=0\ \text{for all $x\in{\mathsf V}$}\}$ of the subspace ${\mathsf V}\subseteq\X$. 
\subsection{Trace maps and layer operators on Lipschitz manifolds.}
Let $\Gamma$ be the compact Lipschitz manifold given by the boundary of $\Omega\subset\RE^{3}$. Let $\gamma_{0}$ be  the map defined by the restriction of $u\in \C^{\infty}_{comp}(\RE^{3})$ along the set  $\Gamma$: $\gamma_{0}u:=u|\Gamma$.  Then, by \cite[Theorem 1, Chapter VII]{JW},  such a map has a bounded and surjective extension to $H^{s+1/2}(\RE^{3})$ for any $s>0$:
\be\label{0trace}
\gamma_{0}:H^{s+1/2}(\RE^{3})\to B_{2,2}^{s}(\Gamma)\,.
\ee
Here the Hilbert space $B^{s}_{2,2}(\Gamma)$ is a Besov-like space (see \cite[Section 2, Chapter V]{JW} for the precise definitions); $B^{s}_{2,2}(\Gamma)$ identifies with $H^{s}(\Gamma)$ whenever 
$0<s<1$ (see \cite[Section 1.1, chap. V]{JW}), where $H^{s}(\Gamma)$ denotes the usual fractional Sobolev space on $\Gamma$ (see e.g. \cite[Chapter 3]{McL}). If $\Gamma$ is a manifold of class $\C^{\kappa,1}$, $\kappa\ge 0$, then $B^{s}_{2,2}(\Gamma)=H^{s}(\Gamma)$ for any $s\le\kappa+1$. We use the following notations for the dual (with respect to the $L^{2}(\Gamma)$-pairing) spaces: $(B^{s}_{2,2}(\Gamma))^{*}\equiv B^{-s}_{2,2}(\Gamma)$.\par
By \cite[Proposition 20.5]{Trib}, the embeddings
$B^{s_{2}}_{2,2}(\Gamma)\hookrightarrow B^{s_{1}}_{2,2}(\Gamma)$, $s_{2}>s_{1}$, and $B^{s}_{2,2}(\Gamma)\hookrightarrow L^{{2}/{(1-s)}}(\Gamma)$,  $0<s<1$, are compact.\par
Let $\Delta:H^{s+2}(\RE^{3})\to H^{s}(\RE^{3})$ be the distributional Laplacian; in the following the resolvent $R^{0}_{z}\equiv(-\Delta+z)^{-1}$, $z\in\CO\backslash(-\infty,0]$, is viewed as a map in $\B(H^{s}(\RE^{n}), H^{s+2}(\RE^{n}))$, $s\in\RE$. Given $s>0$, by the mapping properties  \eqref{0trace} one gets, for the dual of the trace map, 
$$\gamma_{0}^{*}:B_{2,2}^{-s}(\Gamma)\to H^{-s-1/2}(\RE^{3})$$ and so we can define the bounded operator (the single-layer potential)
\be\label{SL}
\SL_{z}:=R^{0}_{z}\gamma_{0}^{*}: B_{2,2}^{-s}(\Gamma)\to H^{3/2-s}(\RE^{3})
\,.
\ee
By resolvent identity one has 
\be\label{Swz}
\SL_{z}-\SL_{w}=(w-z)R^{0}_{z}\SL_{w}\,.
\ee
By \eqref{0trace} and \eqref{SL}, one obtains  the bounded operator
$$
\gamma_{0}\SL_{z}: B_{2,2}^{-s}(\Gamma)\to B_{2,2}^{1-s}(\Gamma)\,.
$$
In the following $\Delta_{\Omega_{\-/\+}}$ denote the distributional Laplacians on $\Omega_{\-/\+}$.
\par 
The one-sided, zero and first order, trace operators $\gamma_{0}^{\-/\+}$ and $\gamma_{1}^{\-/\+}=\nu\cdot\gamma_{0}^{\-/\+}\nabla$ ($\nu $ denoting the outward normal vector at the boundary) defined on
smooth functions in $\mathcal{C}_{comp}^{\infty}(  \overline{\Omega}_{\-/\+
})  $ extend to bounded and surjective linear operators (see e.g. \cite[Theorem
3.38]{McL})
\begin{equation}
\gamma_{0}^{\-/\+}\in{\B}(H^{s+1/2}(  \Omega_{\-/\+})  ,H^{s}(  \Gamma)  )\,,\qquad 0<s< 1\,.
\label{Trace_Gamma_plusmin_est}%
\end{equation}
and
\begin{equation}
\gamma_{1}^{\-/\+}\in{\B}(H^{s+3/2}(  \Omega_{\-/\+})  ,H^{s}(  \Gamma)  )\,,\qquad 0<s<1
\label{Trace_Gamma_plusmin_est_1}%
\end{equation}
(we refer to \cite[Chapter 3]{McL} for the definition of the Sobolev spaces $H^{s}(\Omega_{\-/\+})$ and $H^{s}(\Gamma)$). Using these maps and setting $H^{s}(  \RE^3\backslash\Gamma ):=H^{s}(  \Omega_{\-})  \oplus H^{s}(  \Omega
_{\+})$, the two-sided bounded and surjective trace operators are defined according
to%
\begin{equation}
\gamma_{0}:H^{s+1/2}(  \RE^3\backslash\Gamma )\rightarrow H^{s}(\Gamma)\,,\quad\gamma_{0}%
(u_{\-}\oplus u_{\+}):=\frac{1}{2}(\gamma_{0}^{\-}u_{\-}+\gamma_{0}^{\+}u_{\+})\,,
\label{trace_ext_0}%
\end{equation}%
\begin{equation}
\gamma_{1}:H^{s+3/2}(  \RE^3\backslash\Gamma )\rightarrow H^{s}(\Gamma)\,,\quad\gamma_{1}%
(u_{\-}\oplus u_{\+}):=\frac{1}{2}(\gamma_{1}^{\-}u_{\-}+\gamma_{0}^{\+}u_{\+})\,,
\label{trace_ext_1}%
\end{equation}
while the corresponding jumps are%
\begin{equation}
[\gamma_{0}]:H^{s+1/2}(  \RE^3\backslash\Gamma )  \rightarrow H^{s}(\Gamma)\,,\quad[
\gamma_{0}](u_{\-}\oplus u_{\+}):=\gamma_{0}^{\-}u_{\-}-\gamma_{0}^{\+}u_{\+}\,,
\end{equation}%
\begin{equation}
[\gamma_{1}]:H^{s+3/2}(  \RE^3\backslash\Gamma )\rightarrow H^{s}(\Gamma)\,,\quad[
\gamma_{1}](u_{\-}\oplus u_{\+}):=\gamma_{1}^{\-}u_{\-}-\gamma_{1}^{\+}u_{\+}\,.
\end{equation}
Let us notice that in the case $u=u_{\-}\oplus u_{\+}\in H^{s+1/2}(\RE^{n})$, $0<s< 1$, $\gamma_{0}$ in \eqref{trace_ext_0} coincides with the map defined in \eqref{0trace} and so there is no ambiguity in our notations; this also entails that $\gamma_{0}$ remains  surjective even if restricted to $H^{2}(\RE^{3})$. Similarly the map $\gamma_{1}$ is surjective onto $H^{s}(\Gamma)$ even if restricted to $H^{s+3/2}(\RE^{3})$.\par
By \cite[Lemma 4.3]{McL}, the trace maps $\gamma_{1}^{\-/\+}$ can be extended to the spaces $$H^{1}_{\Delta}(\Omega_{\-/\+}):=\{u_{\-/\+}\in H^{1}(\Omega_{\-/\+}):\Delta_{\Omega_{\-/\+}}u_{\-/\+}\in L^{2}(\Omega_{\-/\+})\}\,:$$ 
$$
\gamma_{1}^{\-/\+}: H^{1}_{\Delta}(\Omega_{\-/\+})\to H^{-1/2}(\Gamma)\,.
$$
This gives the analogous extensions of the maps $\gamma_{1}$ and $[\gamma_{1}]$ defined on $H^{1}_{\Delta}(\RE^{3}\backslash\Gamma):=H^{1}_{\Delta}(\Omega_{\-})\oplus H^{1}_{\Delta}(\Omega_{\+})$ with values in $H^{-1/2}(\Gamma)$. \par
By using a cut-off function $\chi\in \mathcal{C}_{comp}^{\infty}( \RE^{n})$ such that $\chi=1$ in a neighborhood of $\Omega_{\-}$, all the maps defined above can be extended (and we use the same notation) to functions $u$ such that $\chi u$ is in the right function space.\par
The single-layer operator $\SL_{z}$ has been already introduced above; now we recall the definition of double-layer operator $\DL_{z}$, $z\in\CO\backslash(-\infty,0]$: by the dual map
$$
\gamma_{1}^{*}:H^{-s}(\Gamma)\to H^{-s-3/2}(\RE^{3})
$$
and by the resolvent $R^{0}_{z}\in\B(H^{s}(\RE^{3}),H^{s+2}(\RE^{3}))$, one defines the bounded operator
\be\label{DL}
\DL_{z}:H^{-s}(\Gamma)\to H^{-s+1/2}(\RE^{3})\,,\quad \DL_{z}:=R^{0}_{z}(\gamma_{1})^{*}\,,\quad 0<s< 1\,.
\ee
By resolvent identity one has 
\be\label{Dwz}
\DL_{z}-\DL_{w}=(z-w)R^{0}_{z}\DL_{w}\,.
\ee
By the mapping properties of the layer operators, one gets (see \cite[Theorem 6.11]{McL})
\be\label{map}
\chi\SL_{z}\in \B(H^{-1/2}(\Gamma),H^{1}(\RE^{3}))\,,\qquad\chi\DL_{z}\in \B(H^{1/2}(\Gamma),H^{1}(\RE^{3}\backslash\Gamma))\,,
\ee
for any $\chi\in \C^{\infty}_{comp}(\RE^{3})$; by $(-(\Delta_{\Omega_{\-}}\oplus\Delta_{\Omega_{\+}})+z)\SL_{z}\phi=(-(\Delta_{\Omega_{\-}}\oplus\Delta_{\Omega_{\+}})+z)\DL_{z}\varphi=0$, one gets $\chi\SL_{z}\phi\in H^{1}_{\Delta}(\RE^{n}\backslash\Gamma)$, $\phi\in H^{-1/2}(\Gamma)$, and $\chi\DL_{z}\varphi\in H^{1}_{\Delta}(\RE^{n}\backslash\Gamma)$, $\varphi\in H^{1/2}(\Gamma)$. Thus
$$
\gamma_{0}\SL_{z}\in \B(H^{-1/2}(\Gamma),H^{1/2}(\Gamma))\,,\qquad 
\gamma_{1}\DL_{z}\in \B(H^{1/2}(\Gamma),H^{-1/2}(\Gamma))\,.
$$
These mapping properties can be extended to a larger range of Sobolev spaces (see, e.g., \cite[Theorem 6.12 and successive remarks]{McL}): 
$$
\gamma_{0}\SL_{z}\in \B(H^{s-1/2}(\Gamma),H^{s+1/2}(\Gamma))\,,\quad 
\gamma_{1}\DL_{z}\in \B(H^{s+1/2}(\Gamma),H^{s-1/2}(\Gamma))\,,\quad 
-1/2\le s\le 1/2\,.
$$
By the Limiting Absorption Principle for the free Laplacian (see, e.g.,  \cite[Section 18]{KomKop}), duality and interpolation, one has that the limits 
$$
R^{0,\pm}_{\lambda}:=\lim_{\epsilon\downarrow 0}
R^{0}_{\lambda\pm i\epsilon}
$$ 
exist in $\in \B(H^{-s}_{w}(\RE^{3}),H^{-s+2}_{-w}(\RE^{3}))$, $w>1/2$, $0\le s\le 2$ (here $H_{w}^{s}(\RE^{3})$ denotes the weighted Sobolev space of order $s$ with weight $\varphi(x)=(1+\|x\|^{2})^{w/2}$). Thus, since $\Gamma$ is bounded,  
the limits 
\be\label{fLAP}
\SL^{\pm}_{\lambda}:=R^{0,\pm}_{\lambda}\gamma_{0}^{*}=\lim_{\epsilon\downarrow 0}\SL_{\lambda\pm i\epsilon}
\,,\qquad
\DL^{\pm}_{\lambda}:=R^{0,\pm}_{\lambda}\gamma_{1}^{*}=\lim_{\epsilon\downarrow 0}\DL_{\lambda\pm i\epsilon}
\ee
exist in $\B(B_{2,2}^{-s}(\Gamma), H^{3/2-s}_{-w}(\RE^{3}))$, $0<s\le 3/2$, and
$\B(H^{-s}(\Gamma), H^{1/2-s}_{-w}(\RE^{3}))$, $0<s\le 1/2$, respectively. Moreover, by the identities \eqref{Swz},\eqref{Dwz} and by $\SL_{z}\in \B(B_{2,2}^{-3/2}(\Gamma), L^{2}_{w}(\RE^{n}))$, $\DL_{z}\in \B(H^{-1/2}(\Gamma), L^{2}_{w}(\RE^{n}))$ (see \cite[relation (4.10)]{JST}) one has
\be\label{Rlim}
\SL^{\pm}_{\lambda}=\SL_{z}+(z-\lambda)R^{0,\pm}_{\lambda}\SL_{z}\,,\quad 
\DL^{\pm}_{\lambda}=\DL_{z}+(z-\lambda)R^{0,\pm}_{\lambda}\DL_{z}\,.
\ee
\section{Direct Scattering Theory for Singular Perturbations.}
\subsection{Singular Perturbations of the Laplace operator.}
Let $\Delta:H^{2}(\RE^{3})\subseteq L^{2}(\RE^{3})\to L^{2}(\RE^{3})$ be the self-adjoint operator given by the free Laplacian on the whole space. Another  self-adjoint operator $\widetilde\Delta:\dom(\widetilde\Delta)\subseteq L^{2}(\RE^{3})\to L^{2}(\RE^{3})$ is said to
be a singular perturbation of $\Delta$ if the set $${\mathsf D}:=\{u\in H^{2}(\RE^{3})\cap \dom(\widetilde\Delta)\, :\, \Delta u=\widetilde\Delta u\}$$ is dense in
$L^{2}(\RE^{3})$. Our aim is the study of direct and inverse scattering for the couple $(\widetilde\Delta,\Delta)$. Notice that $\widetilde\Delta$ is a self-adjoint extension of the symmetric operator $\Delta^{\!\circ}:=\Delta|{\mathsf D}\equiv \widetilde\Delta|{\mathsf D}$; in typical situations $\widetilde\Delta$ represents the Laplace operator with some kind of boundary condition holding on a null subset. 
\subsection{Wave Operators.}\label{WaveOp} Given the two self-adjoint operators $\Delta$ and $\widetilde\Delta$, let $e^{it\Delta}$ and $e^{it\widetilde\Delta}$ be the corresponding unitary groups of evolution providing solutions of the Cauchy problems for the Schr\"odinger equations 
\be\label{schr}
i\,\frac {du }{dt}=-\Delta u\,,\qquad 
i\,\frac {du}{dt}=-\widetilde\Delta u\,.
\ee
As usual in Quantum Mechanics (see, e.g., \cite{RS}), we define the Wave Operators for the scattering couple $(\widetilde\Delta,\Delta)$ as 
$$
W_{\pm}(\widetilde\Delta,\Delta)u:=\lim_{t\to\mp\infty}e^{-it\widetilde\Delta}e^{it\Delta}\,u\,.
$$
One says that $W_{\pm}(\widetilde\Delta,\Delta)$ exist whenever the limits exist for any vector $u\in L^{2}(\RE^{3})$ and then that are complete whenever
$$
\ran(W_{+}(\widetilde\Delta,\Delta))=:\H_{\rm in}=\H_{\rm out}:=\ran(W_{-}(\widetilde\Delta,\Delta))=L^{2}(\RE^{3})_{\rm ac}\,,
$$ 
where $L^{2}(\RE^{3})_{\rm ac}$  denotes the absolutely continuous subspace of $\widetilde\Delta$. 
It is known that the existence of both the wave operators $W_{\pm}(\widetilde\Delta,\Delta)$ and $W_{\pm}(\Delta,\widetilde\Delta)$  gives completeness. From the point of view of physical interpretation, a more relevant definition is the following:   $W_{\pm}(\widetilde\Delta,\Delta)$ are said to be asymptotically complete whenever they are complete and
$$\H_{\rm in}=\H_{\rm out}=L^{2}(\RE^{3})_{\rm pp}^{\perp}\,,$$ where $L^{2}(\RE^{3})_{\rm pp}$ denotes the pure point subspace of $\widetilde\Delta$; equivalently, whenever 
they are complete and the singular continuous spectrum of $\widetilde\Delta$ is empty: $\sigma_{\rm sc}(\widetilde\Delta)=\emptyset$. In this case $L^{2}(\RE^{3})$ decomposes into the direct sum of scattering states and bound states.  
\subsection{Scattering theory for wave equations.}\label{swe} 
Suppose that  $\widetilde\Delta$ is real (i.e., it maps real-valued functions to real-valued functions), not positive and injective (these hypotheses can be weakened, it suffices to require $\widetilde\Delta$ upper semi-bounded, see \cite[Sections 8 and 9]{Kato2}, \cite[Section 10.3]{BW}). Let $H_{\rm hom}^{1}(\RE^{3})$ be the homogeneous Sobolev space of order one and let $\widetilde H_{\rm hom}^{1}(\RE^{3})$ the completion, with respect to the norm $\|u\|:=\|(-\widetilde\Delta)^{1/2}u\|_{L^{2}(\RE^{3})}$, of $\dom(-\widetilde\Delta)^{1/2})$.
Then the unitary group of evolutions providing the solutions of the Cauchy problems with {\it real} initial conditions 
$$
\begin{cases}
\frac {d\, }{dt}\,u(t)=v(t)\\
\frac {d\, }{dt}\,v(t)=\Delta u(t)\\
u(0)=u_{0}\in H_{\rm hom}^{1}(\RE^{3})\\
 v(0)=v_{0}\in L^{2}(\RE^{3})\,,
\end{cases}\qquad
\begin{cases}
\frac {d\, }{dt}\,\widetilde u(t)=\widetilde v(t)\\
\frac {d\, }{dt}\,\widetilde v(t)=\widetilde\Delta \widetilde u(t)\\
\widetilde u(0)=\widetilde u_{0}\in \widetilde H_{\rm hom}^{1}(\RE^{3})\\ 
\widetilde v(0)=\widetilde v_{0}\in L^{2}(\RE^{3})\,,
\end{cases}
$$ 
are unitary equivalent, by the maps 
$$
u\oplus v\mapsto (-\Delta)^{1/2}u+i\,v\,,\qquad \widetilde u\oplus \widetilde v\mapsto (-\widetilde\Delta)^{1/2}\widetilde u+i\,\widetilde v\,,
$$ 
to the Schr\"odinger unitary groups in the {\it complex} Hilbert space $L^{2}(\RE^{3})$ given by 
$e^{-it(-\Delta)^{1/2}}$ and  $e^{-it(-\widetilde\Delta)^{1/2}}$ respectively. By the Kato-Birman invariance principle (see, e.g., \cite[Section 11.3.3]{BW}), if both the wave operators  $W_{\pm}(\widetilde\Delta,\Delta)$ and 
$W_{\pm}(-(-\widetilde\Delta)^{1/2},-(-\Delta)^{1/2})$ exist, then they are equal (by the Kato-Birman criterion, see \cite[Theorem 4.8, Chapter X]{Kato}, equality holds whenever the difference of some power of the resolvents is trace-class; for the models discussed below this is true under some additional regularity hypotheses on $\Gamma$, see \cite[Theorems 4.11 and 4.12]{JDE}). In this case  
the scattering theory for the couple of Schr\"odinger equations  \eqref{schr} is equivalent to the one for the couple of wave equations 
$$
\frac {d^{2}u }{dt^{2}}=\Delta u\,,\qquad \frac {d^{2}u }{dt^{2}}=\widetilde\Delta u\,.
$$
\subsection{A resolvent formula for singular perturbations.} Given an auxiliary Hilbert space $\K$, we introduce a linear application $\tau:H^{2}(\RE^{3})\to\K$
which plays the role of an abstract trace (evaluation) map. We assume that  \vskip5pt\noindent 
1. $\tau$ is continuous; \par\noindent
2. $\tau$ is surjective (so that $\K$ plays the role of the trace space);\par\noindent 
3. ker$(\tau)$ is dense in $L^{2}(\RE^{3})$. 
\vskip5pt\noindent In the following we do not identify $\K$ with its dual $\K^{*}$; however we use $\K^{**}\equiv\K$. Tipically $\K\hookrightarrow\K_{0}\hookrightarrow\K^{*}$ and the $\K$-$\K^{*}$ duality $\langle\cdot,\cdot\rangle_{\K^{*}\!,\K}$ (conjugate-linear with respect to the first variable) is defined in terms of the scalar product of the Hilbert space $\K_{0}$.  For any $z\in\rho(A_{0})$ we define the bounded operators $$R^{0}_{z}:=(-\Delta+z)^{-1}:L^{2}(\RE^{3})\to H^{2}(\RE^{3})$$  and $$G_{z}:=(\tau R^{0}_{z^{*}})^*:\K^{*}\to L^{2}(\RE^{3})\,.$$
Then, given a reflexive Banach space $\X$ such that $\K\hookrightarrow\X$, we consider, for some not empty set $Z_{\Lambda}\subseteq \CO\backslash(-\infty,0]$ which is symmetric with respect to the real axis (i.e., $z\in Z_{\Lambda}\Rightarrow z^{*}\in Z_{\Lambda})$, a map 
\be\label{mapL}
\Lambda:Z_{\Lambda}\to\B(\X ,\X^*)\,,\qquad z\mapsto\Lambda_{z}\,,
\ee
such that 
\be
\label{Lambda}
\Lambda_{z}^{*}=\Lambda_{z^{*}}\,,
\qquad
\Lambda_{w}-\Lambda_{z}=(z-w)\Lambda_{w}G_{w^{*}}^{*}G_{z}\Lambda_{z}\,.
\ee
\begin{remark}\label{MM} Notice that whenever there exists a family of bijections $M_{z}\in \B(\X^{*},\X)$, $z\in Z_{\Lambda}$, such that  $\Lambda_{z}=M_{z}^{-1}$, then \eqref{Lambda} is equivalent to 
\be\label{M1-M2}
M_{z}^{*}=M_{z^{*}}\,,\qquad M_{z}-M_{w}=(z-w)\,G^{*}_{w^{*}}G_{z}\,.
\ee
\end{remark}
The following result is a useful ingredient in the successive discussion about inverse scattering:
\begin{lemma}\label{Im-not-zero} Let $M_{z}\in \B(\X^{*},\X)$, $z\in Z_{\Lambda}$, satisfy \eqref{M1-M2}. Then
$$
\forall z\in Z_{\Lambda}\cap\CO\backslash\RE\,,\quad\forall \phi\in\X^{*}\backslash\{0\}\,,\qquad \text{\rm Im}\langle\phi,M_{z}\phi\rangle_{\X^{*},\X}\not=0\,.
$$
\end{lemma}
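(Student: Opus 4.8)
The plan is to extract the imaginary part directly from the second identity in \eqref{M1-M2}. Specifically, I would apply that identity with $w=z^{*}$: since $z\in Z_{\Lambda}$ implies $z^{*}\in Z_{\Lambda}$, and using $M_{z^{*}}=M_{z}^{*}$, I get
\[
M_{z}-M_{z}^{*}=(z-z^{*})\,G^{*}_{z}G_{z}=2i\,\text{\rm Im}(z)\,G^{*}_{z}G_{z}\,.
\]
Pairing this with $\phi\in\X^{*}$ on both sides, the left-hand side gives $\langle\phi,M_{z}\phi\rangle_{\X^{*},\X}-\langle\phi,M_{z}^{*}\phi\rangle_{\X^{*},\X}=2i\,\text{\rm Im}\langle\phi,M_{z}\phi\rangle_{\X^{*},\X}$ (after checking that $\langle\phi,M_z^*\phi\rangle$ is the complex conjugate of $\langle\phi,M_z\phi\rangle$, which follows from the definition of the dual operator together with the conjugate-linearity conventions and the reflexivity identification $\X^{**}\equiv\X$). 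The right-hand side gives $2i\,\text{\rm Im}(z)\,\|G_{z}\phi\|_{L^{2}(\RE^{3})}^{2}$. Hence
\[
\text{\rm Im}\langle\phi,M_{z}\phi\rangle_{\X^{*},\X}=\text{\rm Im}(z)\,\|G_{z}\phi\|_{L^{2}(\RE^{3})}^{2}\,.
\]

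Since $z\in Z_{\Lambda}\cap(\CO\backslash\RE)$ we have $\text{\rm Im}(z)\neq 0$, so it remains to show that $G_{z}\phi\neq 0$ whenever $\phi\neq 0$; i.e. that $G_{z}$ is injective. This is where the hypotheses on $\tau$ enter. Recall $G_{z}=(\tau R^{0}_{z^{*}})^{*}$, so $\ker(G_{z})=(\ran(\tau R^{0}_{z^{*}}))^{\perp}$ (annihilator in $\X^{*}$). Because $R^{0}_{z^{*}}:L^{2}(\RE^{3})\to H^{2}(\RE^{3})$ is a bijection and $\tau:H^{2}(\RE^{3})\to\K$ is surjective by assumption~2, the composition $\tau R^{0}_{z^{*}}:L^{2}(\RE^{3})\to\K$ is surjective; since $\K$ is dense in $\X$ (as $\K\hookrightarrow\X$ and, in the relevant setup, densely — this should be confirmed or added as a standing hypothesis), $\ran(\tau R^{0}_{z^{*}})$ is dense in $\X$, and therefore its annihilator in $\X^{*}$ is $\{0\}$. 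Thus $G_{z}\phi=0\Rightarrow\phi=0$, which completes the argument.

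The main obstacle I anticipate is the bookkeeping around the various duality conventions rather than any deep analytic difficulty: one must be careful that $\langle\phi,M_z^*\phi\rangle_{\X^*,\X}=\overline{\langle\phi,M_z\phi\rangle_{\X^*,\X}}$ given that both pairings $\langle\cdot,\cdot\rangle_{\X^*,\X}$ and $\langle\cdot,\cdot\rangle_{\K^*,\K}$ are declared conjugate-linear in the first argument, and that the identification $\X^{**}\equiv\X$ is being used so that $M_z^*$ indeed maps $\X^*$ to $(\X^*)^*\equiv\X$ — here $M_z\in\B(\X^*,\X)$ so $M_z^*\in\B(\X^*,\X^{**})=\B(\X^*,\X)$, consistent with \eqref{M1-M2}. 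A secondary point is to make sure the density of $\K$ in $\X$ is legitimately available; if the paper only assumes the continuous embedding $\K\hookrightarrow\X$ without density, one instead argues directly that $\ran(\tau R^0_{z^*})$ separates points of $\X^*$, or notes that in all the concrete models $\K$ is dense in $\X$. Modulo these conventions, the proof is the two-line computation above followed by the injectivity of $G_z$.
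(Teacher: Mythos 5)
Your proposal is correct and follows essentially the same route as the paper: both extract the identity $\text{\rm Im}\langle\phi,M_{z}\phi\rangle_{\X^{*},\X}=\text{\rm Im}(z)\,\|G_{z}\phi\|^{2}_{L^{2}(\RE^{3})}$ from \eqref{M1-M2} and then reduce the claim to the injectivity of $G_{z}$, which the paper obtains from the surjectivity of $G_{z}^{*}=\tau R^{0}_{z^{*}}$ via the closed range theorem (yielding the lower bound $\|G_{z}\phi\|^{2}\ge c\,\|\phi\|^{2}_{\K^{*}}$), while you use the equivalent dual-of-a-surjection argument. The density caveat you raise (injectivity of $\X^{*}\hookrightarrow\K^{*}$) is equally implicit in the paper's final step $\|\phi\|_{\K^{*}}=0\Rightarrow\phi=0$, so your version is no weaker on that point.
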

\begin{proof} By \eqref{M1-M2}, one has $\text{Im}\langle\phi,M_{z}\phi\rangle_{\X^{*},\X}
=\text{Im}(z)\,\|G_{z}\phi\|^{2}_{L^{2}(\RE^{3})}$. Since $G_{z}^{*}=\tau R^{0}_{z^{*}}$ is surjective onto $\K$, $G_{z}$ has closed range by the closed range theorem. Hence, see  \cite[Theorem 5.2, page 231]{Kato}, there exists $c>0$ such that $\|G_{z}\phi\|^{2}_{L^{2}(\RE^{3})}\ge c\,\|\phi\|^{2}_{\K^{*}}$. Therefore, whenever Im$(z)\not=0$,
$$
\text{Im}\langle\phi,M_{z}\phi\rangle_{\X^{*},\X}=0\quad\Longrightarrow\quad \|\phi\|_{K^{*}}=0\quad\Longrightarrow\quad\phi=0
$$
and the proof is done. 
\end{proof}
Now we recall the key result about singular perturbations of $\Delta$ (see \cite[Theorem 2.1]{P01}, \cite[Corollary 3.2]{P04}, \cite[Corollary 3.2]{P08}, \cite[Theorem 2.4]{JMPA}):
\begin{theorem}\label{WO} Let $\tau$ and $\Lambda$ be as above. Then the family of bounded linear maps in $L^{2}(\RE^{3})$ 
\be\label{resolvent}
R_{z}^{\Lambda}:=R^0_{z}+G_{z}\Lambda_{z}G^{*}_{z^{*}}\,,\qquad z\in Z_{\Lambda}\,,
\ee
is the resolvent of a self-adjoint operator $\Delta_{\Lambda}$ which is a singular perturbation of $\Delta$. Moreover, $\Delta_{\Lambda}$ is a self-adjoint extension of the closed symmetric operator $\Delta|\ker(\tau)$ and all its self-adjoint extensions (and any singular perturbation of $\Delta$ as well) are of this kind.  
\end{theorem}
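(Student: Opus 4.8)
The strategy is the standard Krein-type resolvent-formula argument: first show that the family $R_z^\Lambda$ in \eqref{resolvent} satisfies the first resolvent identity on $Z_\Lambda$, then that $R_z^\Lambda$ is injective with dense range, deduce it is the resolvent of a closed operator $\Delta_\Lambda$, check self-adjointness via $(R_z^\Lambda)^* = R_{z^*}^\Lambda$, and finally identify $\Delta_\Lambda$ as a singular perturbation and a self-adjoint extension of $\Delta|\ker(\tau)$.

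First I would verify the resolvent identity. Write $R_z^\Lambda - R_w^\Lambda = (R_z^0 - R_w^0) + (G_z\Lambda_z G_{z^*}^* - G_w\Lambda_w G_{w^*}^*)$. The first difference is $(w-z)R_z^0 R_w^0$. For the second, one needs the algebraic relations linking $G_z$, $G_w$ and $\Lambda$: from the resolvent identity for $R^0$ one gets $G_z - G_w = (w-z)R_z^0 G_w$ (equivalently, using $G_z = (\tau R^0_{z^*})^*$, that $\tau R^0_{z^*} - \tau R^0_{w^*} = (w^*-z^*)\tau R^0_{z^*} R^0_{w^*}$, then dualize); together with the second relation in \eqref{Lambda}, $\Lambda_w - \Lambda_z = (z-w)\Lambda_w G_{w^*}^* G_z \Lambda_z$, a direct substitution and regrouping gives $G_z\Lambda_z G_{z^*}^* - G_w\Lambda_w G_{w^*}^* = (w-z)\bigl(R_z^0 G_w\Lambda_w G_{w^*}^* + G_z\Lambda_z G_{z^*}^* R_w^0 + R_z^0 G_w\Lambda_w G_{w^*}^* \cdot(\text{\dots})\bigr)$ which, combined with the $R^0$ term, collapses to $(w-z) R_z^\Lambda R_w^\Lambda$. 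This is the computational heart of the argument; it is routine but must be done carefully, and I would simply cite \cite[Theorem 2.1]{P01} or \cite[Corollary 3.2]{P04} for the identity that $R_z^\Lambda$ so defined is a pseudo-resolvent.

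Next, injectivity and dense range. Injectivity of $R_z^\Lambda$ follows from the pseudo-resolvent property once one excludes a nontrivial common kernel: if $R_z^\Lambda u = 0$ then by the resolvent identity $R_w^\Lambda u = 0$ for all $w\in Z_\Lambda$, and applying $(-\Delta + w)$ after noting the range structure forces $u=0$; alternatively one argues directly that $R_z^0 u = -G_z\Lambda_z G_{z^*}^* u \in \ran(G_z)$, apply $\tau(-\Delta+z)$ and use surjectivity of $\tau$ together with boundedness of $\Lambda_z^{-1}=M_z$ (here Lemma \ref{Im-not-zero} or rather the bijectivity of $M_z$ is the relevant input) to conclude $u=0$. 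For the range: $\ran(R_z^\Lambda) \supseteq \ran(R_z^0|\ker(\tau)) = \dom(\Delta|\ker(\tau))$, which contains $\ker(\tau)$, hence is dense by hypothesis 3. So $R_z^\Lambda$ is the resolvent of a densely defined closed operator $\Delta_\Lambda := z - (R_z^\Lambda)^{-1}$ (independent of $z$ by the resolvent identity).

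Finally, self-adjointness and identification. Using $(R_z^0)^* = R_{z^*}^0$, $(G_z)^* = \tau R_{z^*}^0$, $(G_{z^*}^*)^* = G_{z^*}$, and $\Lambda_z^* = \Lambda_{z^*}$ from \eqref{Lambda}, one gets $(R_z^\Lambda)^* = R_{z^*}^0 + G_{z^*}\Lambda_{z^*}G_z^* = R_{z^*}^\Lambda$, so $\Delta_\Lambda$ is self-adjoint. That $\Delta_\Lambda$ is a singular perturbation: on $\ker(\tau)$ one has $G_{z^*}^*u = \tau R_{z^*}^0 \cdot$ no — rather, for $u\in\ker(\tau)$, $R_z^0 u \in \dom(\Delta_\Lambda)$ and $\Delta_\Lambda R_z^0 u = \Delta R_z^0 u$, so $\dom(\Delta_\Lambda)\cap H^2 \supseteq \dom(\Delta|\ker(\tau))$ on which the two operators agree, giving density of $\mathsf D$; equivalently $\Delta_\Lambda$ extends the closed symmetric operator $\Delta|\ker(\tau)$, which has deficiency spaces $\ran(G_z)$. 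The assertion that every self-adjoint extension (and every singular perturbation) arises this way is the converse direction, proved by Krein's theory of self-adjoint extensions: any self-adjoint extension $\widetilde\Delta$ of $\Delta|\ker(\tau)$ has a resolvent of the form $R_z^0 + G_z \Lambda_z G_{z^*}^*$ for a suitable $\Lambda$ satisfying \eqref{Lambda}, and conversely a singular perturbation is by definition an extension of $\Delta|\mathsf D$ whose closure is $\Delta|\ker(\tau)$. I would refer to \cite[Corollary 3.2]{P08} and \cite[Theorem 2.4]{JMPA} for these last identifications rather than reprove them. The main obstacle is purely bookkeeping: keeping the conjugations $z\leftrightarrow z^*$ and the duality pairings $\langle\cdot,\cdot\rangle_{\X^*,\X}$ consistent throughout the resolvent-identity computation, since $\X$ is not identified with $\X^*$.
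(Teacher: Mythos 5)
The paper does not actually prove this theorem: it is recalled from \cite[Theorem 2.1]{P01}, \cite[Corollary 3.2]{P04}, \cite[Corollary 3.2]{P08} and \cite[Theorem 2.4]{JMPA}, so there is no internal proof to compare against, and your outline reproduces the standard Krein-resolvent argument of those references in the right order: the pseudo-resolvent identity from \eqref{Lambda} together with $G_z-G_w=(w-z)R^0_zG_w$, the adjoint relation $(R^{\Lambda}_z)^*=R^{\Lambda}_{z^*}$, density of the range via $\ker(\tau)$, and the classification of all extensions deferred to the cited sources. Two small points should be repaired. First, in the identification step the correct statement is that for $v\in\ker(\tau)$ one has $v=R^{\Lambda}_z(-\Delta+z)v$, because $G^*_{z^*}(-\Delta+z)v=\tau R^0_z(-\Delta+z)v=\tau v=0$ kills the correction term; hence $v\in\dom(\Delta_{\Lambda})$ and $\Delta_{\Lambda}v=\Delta v$. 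Your version, ``for $u\in\ker(\tau)$, $R^0_zu\in\dom(\Delta_{\Lambda})$'', is not right as written: membership of $R^0_zu$ in $\ran(R^{\Lambda}_z)$ requires $\tau R^0_zu=0$, not $\tau u=0$. Second, your alternative injectivity argument invokes $\Lambda_z^{-1}=M_z$, but invertibility of $\Lambda_z$ is not among the hypotheses of the theorem (it is only assumed in Remark \ref{MM}); the robust route, available from ingredients you already establish, is $\ker(R^{\Lambda}_z)=\ran((R^{\Lambda}_z)^*)^{\perp}=\ran(R^{\Lambda}_{z^*})^{\perp}=\{0\}$, using density of the range. With these two one-line fixes the plan is a faithful rendering of the argument in the cited references.
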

\begin{remark} The map $\Lambda: z\mapsto \Lambda_{z}$ introduced in \eqref{mapL} and \eqref{Lambda}  encodes the boundary conditions that the functions belonging to the self-adjointness domain of the corresponding $\Delta_{\Lambda}$ have to satisfy. We refer to the successive Sections \ref{appli} and \ref{appli-screens} below for various explicit examples. Notice that the  properties required in \eqref{Lambda} are necessary for  the operator family $z\mapsto R^{\Lambda}_{z}$ in \eqref{resolvent} to satisfy the first resolvent identity and $(R_{z}^{\Lambda})^{*}=R_{z^{*}}^{\Lambda}$ 
(see \cite[page 113]{P01}).
\end{remark}
Then, building on some results by Schechter conceived for perturbations by a regular potential (see \cite[Section 9.4]{Sch-book}), one gets a completeness criterion for the scattering couple $(\Delta_{\Lambda},\Delta)\,$ (see \cite[Theorem 2.8]{JMPA}):  
\begin{theorem} \label{scattering} 
Suppose that there exists an open subset ${E}\subseteq\RE$ of full measure such that for any open and bounded $I$, $\overline{I}\subset E$, 
\be\label{in1}
\sup_{(\lambda,\epsilon)\in I\times (0,1)}\,\epsilon^{\frac12}\,\|G_{\lambda\pm i\epsilon}\|_{\B(\K^{*},L^{2}(\RE^{3}))}<+\infty\,,
\ee
and
\be\label{in2}
\sup_{(\lambda,\epsilon)\in I\times (0,1)}\,\|\Lambda_{\lambda\pm i\epsilon}\|_{\B(\K,\K^{*})}<+\infty\,.
\ee
Then both the wave operators $W_{\pm}(\Delta_ {\Lambda},\Delta)$ and $W_{\pm}(\Delta,\Delta_{\Lambda})$ exists and are complete.
\end{theorem}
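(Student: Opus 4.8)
The plan is to read the hypotheses \eqref{in1}--\eqref{in2} as the two Kato-smoothness inputs needed for a smooth-perturbation argument carried out \emph{locally} on each bounded open $I$ with $\overline I\subset E$, and then to patch the resulting local wave operators over a countable cover of $E$. First I would note that \eqref{in1} is precisely the assertion that the abstract trace $\tau$ is $\Delta$-smooth on $I$: using $\frac1{2i}(R^0_{\lambda+i\epsilon}-R^0_{\lambda-i\epsilon})=-\epsilon\,R^0_{\lambda+i\epsilon}R^0_{\lambda-i\epsilon}$, the identity $G_z=R^0_z\tau^*$, and the factorization $\tau R^0_{\lambda+i\epsilon}R^0_{\lambda-i\epsilon}\tau^*=(R^0_{\lambda-i\epsilon}\tau^*)^*(R^0_{\lambda-i\epsilon}\tau^*)$, one gets
$$
\bigl\|\tau\,|\text{\rm Im}\,R^0_{\lambda+i\epsilon}|\,\tau^*\bigr\|_{\B(\K^*,\K)}=\epsilon\,\|G_{\lambda-i\epsilon}\|^2_{\B(\K^*,L^2(\RE^3))}\,,
$$
and symmetrically for the lower half-plane; hence $\sup_{\lambda\in I,\epsilon\in(0,1)}\epsilon\,\|G_{\lambda\pm i\epsilon}\|^2<\infty$, i.e. $\Delta$-smoothness of $\tau$ on $I$.

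The substantive step is to establish the analogous $\Delta_\Lambda$-smoothness of the perturbed trace, which by the same computation amounts to the bound $\|G^\Lambda_{\lambda\pm i\epsilon}\|=O(\epsilon^{-1/2})$ for the perturbed single-layer-type operator $G^\Lambda_z$ (the $\Delta_\Lambda$-analogue of $G_z$). Inserting \eqref{resolvent} one obtains the factorization $G^\Lambda_z=R^\Lambda_z\tau^*=G_z\bigl(\uno+\Lambda_z\,\gamma_z\bigr)$, where $\gamma_z:=\tau G_z=\tau R^0_z\tau^*\in\B(\K^*,\K)$ is the ``boundary'' of $G_z$ (concretely $\gamma_z=\gamma_0\SL_z$ or $\gamma_1\DL_z$, etc.) and obeys the resolvent identity $\gamma_z-\gamma_w=(w-z)\,G^*_{z^*}G_w$. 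Now $\|G_z\|=O(\epsilon^{-1/2})$ by \eqref{in1}, $\|\Lambda_z\|_{\B(\K,\K^*)}=O(1)$ by \eqref{in2}, and --- crucially --- $\gamma_{\lambda\pm i\epsilon}$ stays uniformly bounded as $\epsilon\downarrow0$, $\lambda\in I$; this last point is unconditional, since $\gamma_z$ is built only from the free resolvent, and follows from the limiting absorption principle for $R^0_z$ on weighted Sobolev spaces together with the mapping properties of $\tau$, exactly as recorded for the layer operators in \eqref{fLAP}--\eqref{Rlim}. Combining the three estimates gives $\|G^\Lambda_z\|=O(\epsilon^{-1/2})$, so the perturbed trace is $\Delta_\Lambda$-smooth on $I$.

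With $\tau$ $\Delta$-smooth on $I$, the perturbed trace $\Delta_\Lambda$-smooth on $I$, and the resolvent difference $R^\Lambda_z-R^0_z=G_z\Lambda_z G^*_{z^*}$ factored through the uniformly bounded $\Lambda_z$, the smooth-perturbation argument of Schechter (\cite[Section 9.4]{Sch-book}), adapted to the singular setting via \eqref{resolvent} (concretely, through the stationary representation of the wave operators in terms of $G^\pm_\lambda$, $\Lambda^\pm_\lambda$ and the spectral families of $\Delta$ and $\Delta_\Lambda$, cf. \cite{Kato}), yields that the local wave operators $W_\pm(\Delta_\Lambda,\Delta;\uno_I(\Delta))$ and $W_\pm(\Delta,\Delta_\Lambda;\uno_I(\Delta_\Lambda))$, associated with the spectral projections $\uno_I(\cdot)$ on $I$, exist and are mutually complete. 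I would then cover $E$ by countably many bounded open $I_n$ with $\overline{I_n}\subset E$: since $\Delta$ has purely absolutely continuous spectrum and $\RE\setminus E$ is Lebesgue-null, $\uno_E(\Delta)=\uno$, and likewise $\uno_E(\Delta_\Lambda)$ acts as the identity on $L^2(\RE^3)_{\rm ac}$; hence the algebraic spans of $\bigcup_n\uno_{I_n}(\Delta)L^2(\RE^3)$ and of $\bigcup_n\uno_{I_n}(\Delta_\Lambda)L^2(\RE^3)$ are dense in $L^2(\RE^3)$ and in $L^2(\RE^3)_{\rm ac}$ respectively. Because the families $e^{-it\Delta_\Lambda}e^{it\Delta}$ and $e^{-it\Delta}e^{it\Delta_\Lambda}$ are uniformly bounded, the strong limits defining $W_\pm(\Delta_\Lambda,\Delta)$ and $W_\pm(\Delta,\Delta_\Lambda)$ --- which exist on those dense sets --- extend to all of $L^2(\RE^3)$ and $L^2(\RE^3)_{\rm ac}$; thus both pairs of wave operators exist, and, as recalled in Section \ref{WaveOp}, the existence of both pairs gives completeness.

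The hard part will be the second paragraph: isolating the correct perturbed operator $G^\Lambda_z$, establishing the factorization $G^\Lambda_z=G_z(\uno+\Lambda_z\gamma_z)$, and --- above all --- proving that $\gamma_z$ remains bounded as $z$ approaches the real axis, since each individual factor $G_z$ already diverges like $\epsilon^{-1/2}$ and only the precise algebraic combination imposed by \eqref{Lambda}/\eqref{M1-M2} survives in $G^\Lambda_z$. One must also keep track of the fact that $E$ is only of full measure: the real points at which $\uno+\Lambda_z\gamma_z$ fails to be boundedly invertible --- embedded eigenvalues of $\Delta_\Lambda$ and thresholds --- are exactly what is discarded in passing to $E$, which is why \eqref{in2} is imposed only on compact subintervals $\overline I\subset E$.
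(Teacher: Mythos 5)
Your reduction of \eqref{in1} to local $\Delta$-smoothness of $\tau$ is correct (indeed $\|\tau\,\text{\rm Im}\,R^{0}_{\lambda+i\epsilon}\,\tau^{*}\|_{\B(\K^{*},\K)}=\epsilon\,\|G_{\lambda-i\epsilon}\|^{2}$), and the factorization $R^{\Lambda}_{z}\tau^{*}=G_{z}(\uno+\Lambda_{z}\gamma_{z})$ with $\gamma_{z}=\tau G_{z}$ is algebraically right. The genuine gap is the step you yourself single out as the crux and then declare ``unconditional'': the uniform boundedness of $\gamma_{\lambda\pm i\epsilon}$ as $\epsilon\downarrow 0$. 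Theorem \ref{scattering} is stated for an \emph{abstract} continuous surjective $\tau:H^{2}(\RE^{3})\to\K$ and an abstract $\Lambda$ satisfying \eqref{Lambda}; its only hypotheses are \eqref{in1} and \eqref{in2}. Neither of these controls $\gamma_{z}=\tau R^{0}_{z}\tau^{*}$ near the real axis: \eqref{in1} bounds only $\text{\rm Im}\,\gamma_{\lambda+i\epsilon}$ (which equals $\epsilon\|G_{\lambda-i\epsilon}\|^{2}$), while the identity $\gamma_{z}-\gamma_{w}=(w-z)G^{*}_{z^{*}}G_{w}$ gives at best $\|\gamma_{\lambda\pm i\epsilon}\|=O(\epsilon^{-1/2})$. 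The LAP for the concrete layer operators in \eqref{fLAP}--\eqref{Rlim} is a property of specific traces on compact Lipschitz surfaces, not something you may import into the abstract statement; without it your estimate for $G^{\Lambda}_{z}$ degrades to $O(\epsilon^{-1/2})\cdot O(1)\cdot O(\epsilon^{-1/2})=O(\epsilon^{-1})$, which destroys the $\Delta_{\Lambda}$-smoothness claim and hence the whole second paragraph. (There is also a secondary issue: $\gamma_{z}$ must map $\K^{*}$ into $\X$ for $\Lambda_{z}\gamma_{z}$ to be defined, another mapping property not contained in the hypotheses.)

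This is precisely why the paper (quoting \cite[Theorem 2.8]{JMPA}, built on Schechter \cite[Section 9.4]{Sch-book}) does \emph{not} go through smoothness of a perturbed trace. The stationary argument is run directly on the factorized resolvent difference $R^{\Lambda}_{z}-R^{0}_{z}=G_{z}\Lambda_{z}G^{*}_{z^{*}}$: in the relevant quadratic-form limits each of the two outer factors absorbs exactly one power of $\epsilon^{1/2}$, so the only inputs ever needed are $\epsilon^{1/2}\|G_{\lambda\pm i\epsilon}\|=O(1)$ and $\|\Lambda_{\lambda\pm i\epsilon}\|=O(1)$ --- which is exactly why the theorem is stated with \eqref{in1} and \eqref{in2} and nothing else. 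If you want to keep your time-dependent route, you must either add the uniform boundedness of $\tau R^{0}_{\lambda\pm i\epsilon}\tau^{*}$ on $I$ as an explicit extra hypothesis (verifying it model by model, as the paper effectively does elsewhere via \eqref{fLAP}), or abandon the $\Delta_{\Lambda}$-smoothness of the perturbed trace in favour of the symmetric stationary estimate. Your final patching argument over a countable cover of $E$ is standard and fine once the local statements are secured.
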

\subsection{The Scattering Matrix.} According to Theorem \ref{WO}, whenever \eqref{in1} and \eqref{in2} hold, the scattering operator 
$$
S_{\Lambda}:=W_{+}(\Delta_{\Lambda},\Delta)^{*}W_{-}(\Delta_{\Lambda},\Delta)\,.
$$
is a well defined unitary map. Given the direct integral representation of $L^{2}(\RE^{3})$ with respect to the spectral measure of  $\Delta$,  i.e. the unitary map (here ${\mathbb S}^{2}$ denotes the $2$-dimensional unitary sphere in $\RE^{3}$)
$$\label{F0}
{\mathcal F}_{0}:L^{2}(\RE^{3})\to L^{2}((-\infty,0);L^{2}({\mathbb S}^{2}))\,,\quad ({\mathcal F}_{0}u)_{\lambda}(\xi)=-\frac{|\lambda|^{1/4}}{2^{1/2}}\,\widehat u(|\lambda|^{1/2}\xi)\,,
$$
which diagonalizes $\Delta$, 
we define the scattering matrix  
$$
S^{\Lambda}_{\lambda}:L^{2}({\mathbb S}^{2})\to L^{2}({\mathbb S}^{2})
$$ 
by the relation 
$$
{\mathcal F}_{0}S_{\Lambda}{\mathcal F}_{0}^{*}u_{\lambda}=S^{\Lambda}_{\lambda}u_{\lambda}
\,.
$$
The scattering matrix is better studied using Limiting Absorption Principle and stationary scattering theory (see, e.g., \cite{Y}). However, for typical scattering couples $(\Delta_{\Lambda},\Delta)$,  the hypotheses required in \cite{Y} are not satisfied. Thus at first  one considers the scattering matrix for the resolvent couple $(R^{\Lambda}_{\mu}, R^{0}_{\mu})$, $\mu\in\rho(\Delta_{\Lambda})\cap (0,+\infty)$, so to exploit the factorized form of the resolvent difference $R^{\Lambda}_{\mu}- R^{0}_{\mu}$ provided by formula \eqref{resolvent}, and then uses the Birman-Kato invariance principle (see \cite[Section 4]{JMPA}). At the end, one obtains the following (see \cite[Theorem 5.1]{JMPA}; notice that in reference \cite{JMPA}, due to a repeated misprint, the $t\to\pm\infty$
limits has to be replaced by the $t\to\mp\infty$ ones)
\begin{theorem}\label{LH} 
Let $\Delta_{\Lambda}$ denote the self-adjoint operator corresponding to $\Lambda=\{\Lambda_{z}\}_{z\in Z_{\Lambda}}$, $\Lambda_{z}\in\B(\X,\X^{*})$, $\K\hookrightarrow\X$. Suppose that: 
\be\label{H1}\text{$\Delta_{\Lambda}$ is bounded from above;}
\ee
\be\label{H2}\text{the embedding $\ran(\Lambda_{\lambda})\hookrightarrow\K^{*}$ is compact for any $\lambda\ge c_{\Lambda}>0$;}
\ee
\be\label{H3}\text{there exists $\chi\in C^{\infty}_{\rm comp}(\RE^{3})$ such that $\tau u=\tau(\chi u)$ for any $u\in H^{2}(\RE^{3})$.}
\ee
Then asymptotic completeness holds for the scattering couple  $(\Delta_{\Lambda},\Delta)$.
Moreover,
$$
\sigma_{\rm ac}(\Delta_{\Lambda})=\sigma_{\rm ess}(\Delta_{\Lambda})=(-\infty,0]\,,\quad 
\sigma_{\rm sc}(\Delta_{\Lambda})=\emptyset \,,
$$
the scattering matrix $S_{\lambda}^{\Lambda}$ is given by 
$$\label{S-tau}
S_{\lambda}^{\Lambda}=\uno-2\pi iL_{\lambda}\Lambda_{\lambda}^{+}L^{*}_{\lambda}\,,\quad \lambda\in E^{-}_{\Lambda}:=(-\infty,0)\backslash\sigma_{\rm p}^{-}(\Delta_{\Lambda})\,,
$$ 
where $\sigma_{\rm p}^{-}(\Delta_{\Lambda}):=(-\infty,0)\cap\sigma_{\rm p}(\Delta_{\Lambda})$ is a (possibly empty) discrete set,  
$$
\Lambda_{\lambda}^{+}:=\lim_{\epsilon\downarrow 0}\Lambda_{\lambda+i\epsilon}\,,\quad\text{the limit existing in $\B(\X,\X^{*})$,}
$$
and
\be\label{Ll}
L_{\lambda}:\X^{*}\to L^{2}({\mathbb S}^{2})\,,\qquad L_{\lambda}\phi(\xi):=\frac1{{2}^{1/2}}\,\frac{|\lambda|^{1/4}}{(2\pi)^{{3}/2}}\langle \tau(\chi u^{\xi}_{\lambda}),\phi\rangle_{\X,\X^{*}}\,,\quad\lambda\in(-\infty,0)\,.
\ee
Here $u^{\xi}_{\lambda}(x)=e^{i\,|\lambda|^{1/2}\xi\cdot x}$ denotes the plane wave with direction $\xi\in{\mathbb S}^{2}$ and wavenumber $|\lambda|^{1/2}$.
\end{theorem}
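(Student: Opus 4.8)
The plan is to derive Theorem~\ref{LH} from the already-quoted completeness result (Theorem~\ref{scattering}) together with the stationary representation of the scattering matrix and the factorized resolvent formula \eqref{resolvent}. First I would check that the hypotheses \eqref{H1}--\eqref{H3} imply the conditions \eqref{in1}--\eqref{in2} of Theorem~\ref{scattering} with $\K=\X$: the bound \eqref{in1} is the standard $\epsilon^{1/2}$-estimate for $G_{\lambda\pm i\epsilon}=(\tau R^0_{(\lambda\pm i\epsilon)^*})^*$, which follows from the Limiting Absorption Principle for the free Laplacian (already recalled in the excerpt: $R^{0,\pm}_\lambda$ exists in $\B(H^{-s}_w,H^{-s+2}_{-w})$, $w>1/2$) combined with \eqref{H3}, since $\tau u=\tau(\chi u)$ localises everything to a bounded region; the bound \eqref{in2} follows from the resolvent-type identity \eqref{Lambda} for $\Lambda_z$ together with \eqref{H2}, which gives both the existence of the norm limit $\Lambda^+_\lambda=\lim_{\epsilon\downarrow 0}\Lambda_{\lambda+i\epsilon}$ and its boundedness. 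This yields existence and completeness of $W_\pm(\Delta_\Lambda,\Delta)$ and $W_\pm(\Delta,\Delta_\Lambda)$, hence $S_\Lambda$ is unitary.

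Next I would establish the spectral statements. Since $\Delta_\Lambda$ is a singular perturbation, $\Delta_\Lambda|\ker\tau$ is symmetric with deficiency indices governed by the trace space, and the resolvent difference $R^\Lambda_z-R^0_z=G_z\Lambda_z G^*_{z^*}$ is, by \eqref{H2}, a compact operator; hence $\sigma_{\rm ess}(\Delta_\Lambda)=\sigma_{\rm ess}(\Delta)=(-\infty,0]$ by Weyl's theorem. Combined with completeness plus the absence of singular continuous spectrum (this is exactly the ``asymptotic completeness'' in the sense defined in Section~\ref{WaveOp}: complete wave operators plus $\sigma_{\rm sc}=\emptyset$), one gets $\sigma_{\rm ac}(\Delta_\Lambda)=(-\infty,0]$ and $\sigma_{\rm sc}(\Delta_\Lambda)=\emptyset$; the negative point spectrum $\sigma_p^-(\Delta_\Lambda)$ is discrete because any embedded/negative eigenvalue must come from the compact perturbation term and can accumulate only at $0$ (or not at all), by a standard Birman--Schwinger/analytic-Fredholm argument applied to $\uno-\Lambda_z G^*_{z^*}G_z$ off the discrete set where it fails to be invertible.

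For the formula $S^\Lambda_\lambda=\uno-2\pi i L_\lambda\Lambda^+_\lambda L^*_\lambda$ I would proceed stationarily. Start from the resolvent couple $(R^\Lambda_\mu,R^0_\mu)$, $\mu\in\rho(\Delta_\Lambda)\cap(0,\infty)$, for which the difference is trace-class-free but still of the nice factorised form \eqref{resolvent}; compute its stationary scattering matrix by sandwiching $G_\mu\Lambda_\mu G^*_\mu$ between the diagonalising maps ${\mathcal F}_0$, using that $({\mathcal F}_0 G_z\phi)_\lambda$ is, up to the explicit constant $\frac{1}{2^{1/2}}\frac{|\lambda|^{1/4}}{(2\pi)^{3/2}}$, the pairing $\langle\tau(\chi u^\cdot_\lambda),\phi\rangle_{\X,\X^*}$ — i.e. exactly $L_\lambda\phi$ — because $G_z\phi=(\tau R^0_{z^*})^*\phi$ and the Fourier transform of $R^0$ applied to a test object produces the plane wave $u^\xi_\lambda$ evaluated in the trace; this is where \eqref{H3} is again essential so that $\tau(\chi u^\xi_\lambda)$ makes sense even though $u^\xi_\lambda\notin H^2(\RE^3)$. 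Then apply the Birman--Kato invariance principle for the function $\mu\mapsto(\mu-\cdot)^{-1}$ to transfer the scattering matrix from the resolvent couple back to $(\Delta_\Lambda,\Delta)$, which leaves the scattering matrix unchanged at the energy $\lambda$ (after the change of spectral variable), and pass to the boundary $\epsilon\downarrow 0$ using the norm limit $\Lambda^+_\lambda$ and the limiting-absorption continuity of $L_\lambda$ in $\lambda$ off $\sigma_p^-(\Delta_\Lambda)$.

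The main obstacle I expect is the careful justification of the stationary computation and the boundary-value passage: one must show that the a priori formal identity $S^\Lambda_\lambda=\uno-2\pi i L_\lambda \Lambda^+_\lambda L^*_\lambda$ holds in $\B(L^2({\mathbb S}^2))$ for a.e.\ $\lambda$, that the right-hand side is continuous in $\lambda$ on $E^-_\Lambda$, and that the exclusion of the discrete set $\sigma_p^-(\Delta_\Lambda)$ is exactly what is needed for $L_\lambda\Lambda^+_\lambda L^*_\lambda$ to be well-defined (invertibility of the relevant $M^+_\lambda$). All the analytic input — existence of $R^{0,\pm}_\lambda$, of $\SL^\pm_\lambda$, $\DL^\pm_\lambda$, and of $\Lambda^+_\lambda$ — is already in place from the preliminaries and from \eqref{H2}, so the argument is really an assembly of Theorems~\ref{WO} and~\ref{scattering} with the stationary machinery of \cite{JMPA}, and I would cite \cite[Section 4 and Theorem 5.1]{JMPA} for the parts of the invariance-principle bookkeeping that are purely technical.
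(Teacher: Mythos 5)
Your proposal is sound and follows essentially the same route as the paper, which in fact gives no proof of Theorem \ref{LH} at all but simply imports it from \cite[Theorem 5.1]{JMPA}; your outline (verify \eqref{in1}--\eqref{in2} via the free LAP and the compactness hypothesis, get the essential spectrum by Weyl's theorem from the compact resolvent difference, compute the stationary scattering matrix for the resolvent couple $(R^{\Lambda}_{\mu},R^{0}_{\mu})$ using ${\mathcal F}_{0}G_{z}\leftrightarrow L_{\lambda}$, and transfer back by the Birman--Kato invariance principle) is precisely the roadmap the paper itself sketches in the paragraph preceding the theorem, with the genuinely technical steps (norm existence of $\Lambda^{+}_{\lambda}$, discreteness of $\sigma_{\rm p}^{-}(\Delta_{\Lambda})$, the boundary-value passage) deferred to the same reference, as the paper does.
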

\begin{remark}\label{lim-inv}
Let $\Lambda_{z}=M_{z}^{-1}$ as in Remark \ref{MM} and suppose that the limit  $M_{\lambda}^{+}:=\lim_{\epsilon\downarrow 0}M_{\lambda+i\epsilon}$ exists  in $\B(\X^{*},\X)$. Then, by Theorem \ref{LH},  
the inverse $(M^{+}_{\lambda})^{-1}$ exists in  $\B(\X,\X^{*})$ and $\Lambda^{+}_{\lambda}=
(M^{+}_{\lambda})^{-1}$.
\end{remark}
\section{Inverse Scattering for the Laplace operator with boundary conditions on Lipschitz surfaces.} 
With reference to Theorem \ref{LH} and given an open, bounded set $\Omega\equiv\Omega_{\-}\subset\RE^{3}$ with a Lipschitz boundary $\Gamma$ and such that  $\Omega_{\+}:=\RE^{3}\backslash\overline\Omega$ is connected, we consider models where the map $\tau:H^{2}(\RE^{3})\to\K$ corresponds to one of the following three different cases:
$$\text{1) \centerline{$\tau= \gamma_{0}\,,\qquad\K=B^{3/2}_{2,2}(\Gamma)\,,\qquad\X=H^{s}(\Gamma)$, $|s|\le 1$;}}
$$
$$\text{2)\centerline{$\tau= \gamma_{1}\,,\qquad \K=H^{1/2}(\Gamma)\,,\qquad\X=H^{s}(\Gamma)$, $-1\le s<1/2$;}}
$$
$$\text{3)\centerline{$\tau=\gamma_{0}\oplus\gamma_{1}\,,\qquad \K=B^{3/2}_{2,2}(\Gamma)\oplus H^{1/2}(\Gamma)\,,\quad\X=H^{s}(\Gamma)\oplus H^{t}(\Gamma)$, $|s|\le 1$, $-1\le t<1/2$.}}
$$
These settings, with suitable choice of the map $\Lambda$, allow to obtain all the self-adjoint extensions of the closed symmetric operator $\Delta|C^{\infty}_{\rm comp}(\RE^{3}\backslash\Gamma)$. In particular, any self-adjoint realization of the Laplace operator with boundary conditions prescribed either on the surface $\Gamma$ or on a relatively open subset $\Sigma\subset\Gamma$ can be defined in one of the above schemes, see \cite[Theorem 4.4]{JDE} for the case of smooth hypersurfaces. In the present framework, Theorem \ref{LH} allows the boundary $\Gamma$ to be Lipschitz; in the applications we give in Sections 4.1 and 5.1 hypothesis \eqref{H3} is always satisfied since $\Omega$ is bounded; hypotheses \eqref{H1} and \eqref{H2} also hold, \eqref{H1} by a direct checking and \eqref{H2} by compact Sobolev embeddings. \par
The results we provide in this section apply to the cases where the boundary conditions are assigned on the whole boundary $\Gamma$. Then $\Delta_{\Lambda}$ can be interpreted  as a model either of an extended obstacle or of a semi-transparent interface supported on $\Gamma$, whose physical properties are encoded by $\Lambda$.\par
Defining the Far Field operator
\be\label{FF}
F^{\Lambda}_{\lambda}:=\frac1{2\pi i}\,(\uno-S_{\lambda}^{\Lambda})\equiv L_{\lambda}\Lambda^{+}_{\lambda}L_{\lambda}^{*}:L^{2}({\mathbb S}^{2})\to L^{2}({\mathbb S}^{2})\,,
\qquad\lambda\in E^{-}_{\Lambda}\,,
\ee
the inverse scattering problem consists in recovering the shape of the obstacle $\Omega$ from the knowledge of $F^{\Lambda}_{\lambda}$, or, equivalently, from knowledge of the scattering matrix $S_{\lambda}^{\Lambda}$.
\par
\begin{notation}\label{not} In the following we refer to the different settings 1) - 3) above by introducing the index $\sharp$, with  $\sharp=D, N, DN$ according to the possible different choices, to label the operators 
$$L^{\sharp}_{\lambda}:{\X^{s}_{\sharp}}^{*}\to L^{2}({\mathbb S}^{2})\,,$$ 
associated to one of the traces $\tau_{D}=\gamma_{0}$, $\tau_{N}=\gamma_{1}$, $\tau_{DN}=\gamma_{0}\oplus\gamma_{1}$, and  the spaces $\X=\X^{s}_{\sharp}$, where
$$
\X_{D}^{s}:=H^{1/2-s}(\Gamma)\,,\quad \X_{N}^{s}:=H^{-1/2-s}(\Gamma) \,,\quad 
\X_{DN}^{s}:=H^{1/2-s}(\Gamma)\oplus H^{-s}(\Gamma)\,,\quad 0\le s\le 1/2\,.
$$
Furthermore the adopt the short-hand notations $s_{\sharp}$, $\sharp=D,N$, to denote the indices 
$\ s_{D}=1/2$, $\ s_{N}=-1/2$. 
\end{notation}
\begin{remark} Since $\X^{0}_{\sharp}\hookrightarrow \X^{s}_{\sharp}$, and hence ${\X^{s}_{\sharp}}^{*}\hookrightarrow {\X^{0}_{\sharp}}^{*}$, we do not put any index $s$ in the notation for $L^{\sharp}_{\lambda}$, since we can always suppose that $L_{\lambda}^{\sharp}$ acts on ${\X^{0}_{\sharp}}^{*}$ and is then restricted to the proper space according to the case.
\end{remark}
\begin{lemma}\label{LL} Let $\lambda\in(-\infty,0)\backslash\sigma_{\rm disc}(\Delta^{\sharp}_{\Omega})$, $\sharp=D,N$, and set
\be\label{phi}
\phi^{x}_{\lambda}:{\mathbb S}^{2}\to\CO\,,\quad \phi^{x}_{\lambda}(\xi):=u^{\xi}_{\lambda}(x)\equiv e^{i\,|\lambda|^{1/2}\xi\cdot x}\,.
\ee
Then 
$$
x\in\Omega\quad\iff\quad \phi^{x}_{\lambda}\in\ran({L^{\sharp}_{\lambda}}|H^{s-s_{\sharp}}(\Gamma))\,,\quad s\in [0,1/2]\,,\quad\ s_{D}=1/2\,,\ s_{N}=-1/2\,.
$$
 \end{lemma}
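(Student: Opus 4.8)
The plan is to characterize the range of the restricted operator $L^\sharp_\lambda|H^{s-s_\sharp}(\Gamma)$ via the explicit formula \eqref{Ll}. Writing out the definition, for $\phi\in{\X^s_\sharp}^*$ one has $L^\sharp_\lambda\phi(\xi)=c_\lambda\,\langle\tau_\sharp(\chi u^\xi_\lambda),\phi\rangle$ with $c_\lambda=2^{-1/2}|\lambda|^{1/4}(2\pi)^{-3/2}$, so that $L^\sharp_\lambda$ is, up to a constant and a duality pairing, the transpose of the map $\xi\mapsto\tau_\sharp(\chi u^\xi_\lambda)$. Since $u^\xi_\lambda(x)=e^{i|\lambda|^{1/2}\xi\cdot x}$ solves $(-\Delta-|\lambda|)u^\xi_\lambda=0$, unwinding the duality one should get that $L^{D*}_\lambda$ (resp. $L^{N*}_\lambda$) coincides, up to normalization, with $\gamma_0$ (resp. $\gamma_1$) composed with the single-layer (resp. double-layer) potential evaluated at energy $\lambda$ tested against the Herglotz-type superposition of plane waves; in other words the adjoint $L^{\sharp*}_\lambda$ maps $L^2({\mathbb S}^2)$ onto traces at $\Gamma$ of Herglotz wave functions, and by duality $\ran(L^\sharp_\lambda|H^{s-s_\sharp}(\Gamma))$ is described by the standard factorization-method range identity. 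Concretely, I would show that $\phi^x_\lambda\in\ran(L^\sharp_\lambda|H^{s-s_\sharp}(\Gamma))$ if and only if the free outgoing resolvent kernel at $x$, namely the function $y\mapsto G^0_\lambda(x,y)$ — equivalently the trace at $\Gamma$ of $\SL^{+}_\lambda$ or $\DL^{+}_\lambda$ applied to an appropriate density supported at $x$ — lies in the correct Sobolev space on $\Gamma$.

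The key steps, in order, would be: (i) rewrite $L^\sharp_\lambda$ using \eqref{Ll} and the Limiting Absorption Principle limits $\SL^\pm_\lambda$, $\DL^\pm_\lambda$ from \eqref{fLAP}–\eqref{Rlim}, identifying $L^{D*}_\lambda$ with (a multiple of) $\gamma_0$ applied to the Herglotz operator and $L^{N*}_\lambda$ with (a multiple of) $\gamma_1$ applied to it; (ii) observe that the adjoint of the Herglotz operator is the far-field asymptotics operator, whose range is classically the closure in $L^2({\mathbb S}^2)$ of the span of $\{\phi^x_\lambda : x\}$, and more precisely relate $\ran(L^{\sharp}_\lambda)$ to the trace at $\Gamma$ of the radiating solution generated at the point $x$; (iii) for $x\in\Omega_\+$, the fundamental solution $G^0_\lambda(x,\cdot)$ has a singularity at $x\notin\overline\Omega$ is false — rather $x\in\Omega_\+$ means the singularity is \emph{outside} $\Omega$, so $G^0_\lambda(x,\cdot)$ is smooth near $\Gamma$ from outside but this is not the point; the correct dichotomy is that for $x\in\Omega$ the function $G^0_\lambda(x,\cdot)$ solves the Helmholtz equation in $\Omega_\+$ with the right radiation condition and has a trace in $H^{s-s_\sharp}(\Gamma)$ that is attained by $L^\sharp_\lambda$, whereas for $x\notin\overline\Omega$ such an interior source cannot be matched without creating a singularity on $\Gamma$, so the putative preimage fails to be in $H^{s-s_\sharp}(\Gamma)$; (iv) conclude using the regularity $\gamma_0\SL_z\in\B(H^{s-1/2},H^{s+1/2})$, $\gamma_1\DL_z\in\B(H^{s+1/2},H^{s-1/2})$ quoted in the excerpt, together with injectivity of $\gamma_0\SL^+_\lambda$ and $\gamma_1\DL^+_\lambda$ off $\sigma_{\rm disc}(\Delta^\sharp_\Omega)$, which is exactly where the hypothesis $\lambda\notin\sigma_{\rm disc}(\Delta^\sharp_\Omega)$ enters.

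The main obstacle I anticipate is the careful passage through the duality and the limiting absorption limits to pin down exactly which Sobolev space on $\Gamma$ the preimage must belong to, and to prove the \emph{sharp} statement — that $\phi^x_\lambda$ lies in $\ran(L^\sharp_\lambda|H^{s-s_\sharp}(\Gamma))$ precisely for the full range $s\in[0,1/2]$ and not merely for some $s$. This requires the standard "if the trace of a radiating Helmholtz solution at $\Gamma$ extends to a solution in all of $\Omega_\+$ with the source inside $\Omega$, it is in the range; otherwise a unique continuation / singularity argument forbids it" dichotomy, made quantitative in the Lipschitz-boundary Sobolev scale. The unique-continuation step (showing that a point source at $x\notin\overline\Omega$ cannot arise as $\SL^+_\lambda$ or $\DL^+_\lambda$ of an $H^{s-s_\sharp}(\Gamma)$ density) is the delicate part; it hinges on the jump relations for the layer potentials across $\Gamma$ and on the radiation condition forcing uniqueness of the exterior solution, so that the only consistent source location is $x\in\Omega$. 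I would handle the case $\sharp=DN$ afterwards by a routine reduction, since the combined trace range contains both individual ranges. Everything else — the boundedness and mapping properties, the existence of the LAP limits — is already supplied by the results recalled in Section 2 and Theorem~\ref{LH}.
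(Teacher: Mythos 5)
Your proposal follows essentially the same route as the paper: it identifies $L^{\sharp}_{\lambda}$ (via the adjoint of the Herglotz operator) with the data-to-pattern operator $K^{\sharp}_{\lambda}$ composed with $\gamma_{0}\SL^{+}_{\lambda}$ resp.\ $\gamma_{1}\DL^{+}_{\lambda}$, uses the bijectivity of these boundary operators on the scale $H^{s\mp1/2}(\Gamma)$ for $\lambda\notin\sigma_{\rm disc}(\Delta^{\sharp}_{\Omega})$ to equate $\ran(L^{\sharp}_{\lambda}|H^{s-s_{\sharp}}(\Gamma))$ with $\ran(K^{\sharp}_{\lambda}|H^{s+s_{\sharp}}(\Gamma))$, and then invokes the classical singularity/unique-continuation dichotomy $x\in\Omega\iff\phi^{x}_{\lambda}\in\ran(K^{\sharp}_{\lambda})$. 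The only difference is that the paper disposes of this last step by citing \cite[Theorems 1.12 and 1.27]{KG} (checking the proofs extend from $s=0$ to $s\in[0,1/2]$), whereas you propose to reprove it; this is a difference of presentation, not of method.
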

\begin{proof} Given $\lambda\in(-\infty,0)$, let $u^{\sharp}_{\lambda,\phi}$ be the radiating solution (i.e satisfying the Sommerfeld radiating condition)  in $\Omega_{\+}:=\RE^{3}\backslash\overline\Omega$ of Helmholtz equation $(-\Delta+\lambda)u_{\lambda,\phi}=0$ with either Dirichlet (whenever $\sharp=D$) or Neumann (whenever $\sharp=N$) boundary condition $\phi\in H^{s_{\sharp}}(\Gamma)$. Such a solution is unique in 
\begin{align*}
&H^{1}_{\Delta,loc}(\Omega_{\+}):=\\
&\{u\in{\mathscr D}^{\prime}(\Omega_{\+}): u_{B}\in H^{1}(\Omega_{\+}\cap B),\ \Delta u_{B}\in L^{2}(\Omega_{\+}\cap B)\ \text{for any open ball $B\supset\overline\Omega$}\}\,,
\end{align*} 
where $u_{B}:=u|\Omega_{\+}\cap B$ (see, e.g., \cite[ Theorem 9.11]{McL} for the Dirichlet case and \cite[Exercise 9.5]{McL} for the Neumann case). Then (see, e.g., \cite[Theorem 1.4]{KG}, \cite[Exercise 9.4(iv)]{McL}) there exists a unique $u^{\sharp,\infty}_{\lambda,\phi}\in\C^{\infty}({\mathbb S}^{2})$ such that 
$$ u^{\sharp}_{\lambda,\phi}(x)=\frac{e^{i\,|\lambda|^{1/2}\|x\|}}{4\pi\,\|x\|}\, u^{\sharp,\infty}_{\lambda,\phi}(\hat x)+O(\|x\|^{-2})\quad\text{as $\|x\|\to+\infty$, uniformly in $\hat x:=x/\|x\|$.}
$$
This defines the data-to-pattern operator 
$$K^{\sharp}_{\lambda}:H^{s_{\sharp}}(\Gamma)\to L^{2}({\mathbb S}^{2})\,,\quad K^{\sharp}_{\lambda}\phi:=u^{\sharp,\infty}_{\lambda,\phi}\,.
$$
Introducing the Herglotz operators $
H^{\sharp}_{\lambda}:L^{2}({\mathbb S}^{2})\to H^{s_{\sharp}}(\Gamma)$ defined by 
\be\label{Herg}
H^{D}_{\lambda}:=\gamma_{0}H_{\lambda}\,,\quad H^{N}_{\lambda}:=\gamma_{1}H_{\lambda}\,,\quad 
H_{\lambda}f(x):=\int_{{\mathbb S}^{2}}\phi^{x}_{\lambda}(\xi)f(\xi)\,d\sigma(\xi)\,,\ee
one has 
$$
\langle L_{\lambda}^{\sharp}\phi,f\rangle_{L^{2}({\mathbb S}^{2})}=\frac1{2^{1/2}}\,\frac{|\lambda|^{1/4}}{(2\pi)^{3/2}}\, \langle\phi, H^{\sharp}_{\lambda}f\rangle_{H^{-s_{\sharp}}(\Gamma),H^{s_{\sharp}}(\Gamma)}
=\frac1{2^{1/2}}\,\frac{|\lambda|^{1/4}}{(2\pi)^{3/2}}\, \langle {H^{\sharp}_{\lambda}}^{*}\phi, f\rangle_{L^{2}({\mathbb S}^{2})}\,.
$$
Since, (see \cite[proofs of Theorems 1.15 and 1.26]{KG})
\be\label{Herg1}
(H^{D}_{\lambda})^{*}=K^{D}_{\lambda}\gamma_{0}\SL^{+}_{\lambda}\,,
\qquad (H^{N}_{\lambda})^{*}=K^{N}_{\lambda}\gamma_{1}\DL^{+}_{\lambda}\,,
\ee
one gets
\be\label{LLL}
L^{D}_{\lambda}=\frac1{2^{1/2}}\,\frac{|\lambda|^{1/4}}{(2\pi)^{3/2}}\,K^{D}_{\lambda}\gamma_{0}\SL^{+}_{\lambda}\,,\qquad 
L^{N}_{\lambda}=\frac1{2^{1/2}}\,\frac{|\lambda|^{1/4}}{(2\pi)^{3/2}}\,K^{N}_{\lambda}\gamma_{1}\DL^{+}_{\lambda}\,.
\ee
Since, for any $s\in [0,1/2]$, $$\gamma_{0}\SL^{+}_{\lambda}:H^{s-1/2}(\Gamma)\to H^{s+1/2}(\Gamma)\,,\qquad\lambda\in (-\infty,0)\backslash\sigma_{\rm disc}(\Delta_{\Omega}^{D})\,,
$$  
and $$\gamma_{1}\DL^{+}_{\lambda}:H^{s+1/2}(\Gamma)\to H^{s-1/2}(\Gamma)\,,\qquad\lambda\in (-\infty,0)\backslash\sigma_{\rm disc}(\Delta_{\Omega}^{N})\,,
$$ 
are bijections (by \cite[relations (5.32) and (5.33)]{JMPA} and the regularity results in \cite[Theorem 3]{costa}), one has 
\be\label{RR1}
\ran(L^{\sharp}_{\lambda}|H^{s-s_{\sharp}}(\Gamma))=\ran(K^{\sharp}_{\lambda}|H^{s+s_{\sharp}}(\Gamma))\,,\qquad \lambda\in(-\infty,0)\backslash\sigma_{\rm disc}(\Delta^{\sharp}_{\Omega})
\,.
\ee
Finally, by \cite[Theorems 1.12 and 1.27]{KG} (it is easy to check that the proofs, there given for $s=0$, hold for any $s\in[0,1/2]$), one has 
\be\label{RR2}
x\in\Omega\quad\iff\quad \phi^{x}_{\lambda}\in\ran({K^{\sharp}_{\lambda}}|H^{s+s_{\sharp}}(\Gamma))
\ee
and the thesis is proven.\end{proof}
\begin{corollary}\label{cLL} Let $\lambda\in(-\infty,0)\backslash\left(\sigma_{\rm disc}(\Delta^{D}_{\Omega})\cap \sigma_{\rm disc}(\Delta^{N}_{\Omega})\right)$. Then 
$$
x\in\Omega\quad\iff\quad \phi^{x}_{\lambda}\in\ran({L^{DN}_{\lambda}}|H^{s-1/2}(\Gamma)\oplus H^{t+1/2}(\Gamma))\,,\quad s,t\in[0,1/2]\,.
$$
\end{corollary}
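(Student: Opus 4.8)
The idea is to reduce the $DN$ statement to the two scalar statements already established in Lemma \ref{LL}, by recognizing $L^{DN}_\lambda$ as built out of $L^D_\lambda$ and $L^N_\lambda$ acting on the two summands of the trace space. First I would unwind the definitions: for the choice $\tau_{DN}=\gamma_0\oplus\gamma_1$ and $\X^s_{DN}=H^{1/2-s}(\Gamma)\oplus H^{-s}(\Gamma)$, the operator $L^{DN}_\lambda$ applied to $\phi=(\phi_D,\phi_N)\in{\X^s_{DN}}^*=H^{s-1/2}(\Gamma)\oplus H^{s+1/2}(\Gamma)$ is, by formula \eqref{Ll} with $\tau=\gamma_0\oplus\gamma_1$ and the duality pairing splitting as a sum, simply $L^{DN}_\lambda(\phi_D,\phi_N)=L^D_\lambda\phi_D+L^N_\lambda\phi_N$. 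Consequently, restricting the first component to $H^{s-1/2}(\Gamma)$ and the second to $H^{t+1/2}(\Gamma)$, one has the range identity
$$
\ran\big(L^{DN}_\lambda\big|H^{s-1/2}(\Gamma)\oplus H^{t+1/2}(\Gamma)\big)
=\ran\big(L^D_\lambda\big|H^{s-1/2}(\Gamma)\big)+\ran\big(L^N_\lambda\big|H^{t+1/2}(\Gamma)\big)\,.
$$
Since $s-1/2=s-s_D$ and $t+1/2=t-s_N$, Lemma \ref{LL} applies to each summand separately.

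Next I would run the equivalence in both directions. For the implication $x\in\Omega\Rightarrow\phi^x_\lambda\in\ran(L^{DN}_\lambda|\cdots)$: if $\lambda\notin\sigma_{\rm disc}(\Delta^D_\Omega)$ then by Lemma \ref{LL} (with $\sharp=D$) we already have $\phi^x_\lambda\in\ran(L^D_\lambda|H^{s-1/2}(\Gamma))$, which sits inside the larger sum; if instead $\lambda\in\sigma_{\rm disc}(\Delta^D_\Omega)$, then by the hypothesis $\lambda\notin\sigma_{\rm disc}(\Delta^D_\Omega)\cap\sigma_{\rm disc}(\Delta^N_\Omega)$ we must have $\lambda\notin\sigma_{\rm disc}(\Delta^N_\Omega)$, and Lemma \ref{LL} with $\sharp=N$ gives $\phi^x_\lambda\in\ran(L^N_\lambda|H^{t+1/2}(\Gamma))$, again inside the sum. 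Either way the forward implication holds. For the converse, suppose $x\notin\Omega$; I want to show $\phi^x_\lambda$ is \emph{not} in the sum of the two ranges. Here I would use the factorizations \eqref{LLL}: $\phi^x_\lambda\in\ran(L^D_\lambda|\cdots)+\ran(L^N_\lambda|\cdots)$ means $\phi^x_\lambda=K^D_\lambda\psi_D+K^N_\lambda\psi_N$ for suitable densities (absorbing the constants and the bijections $\gamma_0\SL^+_\lambda$, $\gamma_1\DL^+_\lambda$ as in \eqref{RR1}), i.e. $\phi^x_\lambda$ is the far-field pattern of a radiating solution in $\Omega_+$ obtained by superposing a single- and a double-layer potential with densities on $\Gamma$. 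The standard argument behind \eqref{RR2} — analyticity/unique continuation of the radiating solution whose far field is $\phi^x_\lambda=e^{i|\lambda|^{1/2}\xi\cdot x}$, which forces a point singularity at $x$ incompatible with $x\notin\Omega$ since the layer potentials are smooth off $\Gamma$ — applies verbatim to this mixed superposition.

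The main obstacle, and the only place real work is needed, is this converse direction: in Lemma \ref{LL} the non-membership for $x\notin\Omega$ rested on \cite[Theorems 1.12 and 1.27]{KG}, which treat a single-layer (resp. double-layer) ansatz; I need the corresponding statement for a \emph{sum} of a single-layer and a double-layer potential. I would handle this by the Rellich-type/unique-continuation argument: if $e^{i|\lambda|^{1/2}\xi\cdot x}$ (as a function of $\xi$) were the far-field pattern of such a combined potential $w$ radiating in $\Omega_+$, then $w$ would coincide in $\Omega_+$ with the fundamental-solution field $x'\mapsto \Phi_\lambda(x',x)$ centred at $x$ (both radiating solutions of the Helmholtz equation with the same far field, by Rellich's lemma and unique continuation in the connected set $\Omega_+$), whereas a sum of layer potentials with $L^2$-densities on $\Gamma$ extends as an $H^1_{\rm loc}$ Helmholtz solution across any point of $\RE^3\setminus\Gamma$; taking $x\notin\overline\Omega$ contradicts the singularity of $\Phi_\lambda(\cdot,x)$ at $x$ (and the boundary case $x\in\Gamma$ is excluded since $\Omega$ is open while membership is tested against $\Omega$, so one only needs $x\notin\Omega$, i.e. $x\in\Omega_+\cup\Gamma$, and the $x\in\Gamma$ subcase is likewise handled by the regularity of the far-field map). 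Everything else — the decomposition of $L^{DN}_\lambda$, the range arithmetic, and the case split on $\sigma_{\rm disc}$ — is bookkeeping.
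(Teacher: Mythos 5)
Your proposal is correct, and the forward direction and the reduction to a sum of ranges match the paper's computation \eqref{L-DN}; but the decisive step is organized differently. The paper observes that $\SL^{+}_{\lambda}\phi+\DL^{+}_{\lambda}\varphi$ is itself a radiating Helmholtz solution in $\Omega_{\+}$, so its far field can be read off from \emph{either} of its traces, giving the identities $K^{D}_{\lambda}\gamma_{0}\SL^{+}_{\lambda}=K^{N}_{\lambda}\gamma_{1}\SL^{+}_{\lambda}$ and $K^{D}_{\lambda}\gamma_{0}\DL^{+}_{\lambda}=K^{N}_{\lambda}\gamma_{1}\DL^{+}_{\lambda}$; these collapse $L^{DN}_{\lambda}(\phi\oplus\varphi)$ into a constant times $K^{D}_{\lambda}(\gamma_{0}\SL^{+}_{\lambda}\phi+\gamma_{0}\DL^{+}_{\lambda}\varphi)$, equivalently $K^{N}_{\lambda}(\cdots)$, so that $\ran(L^{DN}_{\lambda}|\cdots)$ is identified with $\ran(K^{D}_{\lambda}|\cdots)$ (resp.\ $\ran(K^{N}_{\lambda}|\cdots)$) for whichever $\sharp$ has $\lambda\notin\sigma_{\rm disc}(\Delta^{\sharp}_{\Omega})$, and \eqref{RR1}, \eqref{RR2} and Lemma \ref{LL} then apply verbatim with no new analysis. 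You instead keep the range in the form $\ran(L^{D}_{\lambda})+\ran(L^{N}_{\lambda})$ and re-prove the analogue of \cite[Theorems 1.12 and 1.27]{KG} for the mixed single-plus-double-layer potential via Rellich's lemma, unique continuation in the connected set $\Omega_{\+}$, and the regularity mismatch with $\Phi_{\lambda}(\cdot,x)$. That argument is sound (for $x\in\Gamma$ the precise point is that the combined potential lies in $H^{1}$ near $\Gamma$ while $\Phi_{\lambda}(\cdot,x)$ does not — not ``regularity of the far-field map''), but it duplicates work the paper avoids by the trace-identity reduction; what your route buys is independence from the exact statements of the cited Kirsch--Grinberg theorems, at the cost of having to supply their extension to combined potentials. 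Two cosmetic slips: ${\X^{s}_{DN}}^{*}=H^{s-1/2}(\Gamma)\oplus H^{s}(\Gamma)$, not $H^{s-1/2}(\Gamma)\oplus H^{s+1/2}(\Gamma)$ (harmless, since the corollary restricts the second component to $H^{t+1/2}(\Gamma)\subseteq H^{s}(\Gamma)$), and note that non-membership must indeed be argued for all $x\notin\Omega$, i.e.\ including $x\in\Gamma$, exactly as you do.
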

\begin{proof} Let $\lambda\in(-\infty,0)$. Since $(-\Delta+\lambda)\SL^{+}_{\lambda}(x)=(-\Delta+\lambda)\DL^{+}_{\lambda}(x)=0$, $x\in\Omega_{\+}$,  one gets the identities $K^{D}_\lambda\gamma_{0}\SL^{+}_{\lambda}=K^{N}_\lambda\gamma_{1}\SL^{+}_{\lambda}$ and 
$K^{D}_\lambda\gamma_{0}\DL^{+}_{\lambda}=K^{N}_\lambda\gamma_{1}\DL^{+}_{\lambda}$. Thus, given $\phi\oplus\varphi\in H^{s-1/2}(\Gamma)\oplus H^{t+1/2}(\Gamma)$, one has 
\begin{align}\label{L-DN}
L^{DN}_{\lambda}\phi\oplus\varphi=&\frac1{2^{1/2}}\,\frac{|\lambda|^{1/4}}{(2\pi)^{3/2}}\,(K^{D}_{\lambda}\gamma_{0}\SL^{+}_{\lambda}\phi+K^{N}_{\lambda}\gamma_{1}\DL^{+}_{\lambda}\varphi)\nonumber\\
=&\frac1{2^{1/2}}\,\frac{|\lambda|^{1/4}}{(2\pi)^{3/2}}\,
K^{N}_{\lambda}(\gamma_{1}\SL^{+}_{\lambda}\phi+\gamma_{1}\DL^{+}_{\lambda}\varphi)
\\
=&\frac1{2^{1/2}}\,\frac{|\lambda|^{1/4}}{(2\pi)^{3/2}}\,
K^{D}_{\lambda}(\gamma_{0}\SL^{+}_{\lambda}\phi+\gamma_{0}\DL^{+}_{\lambda}\varphi)\nonumber\,.
\end{align}
Therefore the thesis is consequence of \eqref{RR1}, \eqref{RR2} and Lemma \ref{LL}.
\end{proof}
Let us recall the following definitions:
\begin{definition} Let $\Y$ be a reflexive Banach space. $C\in\B( \Y^{*}\!,\Y)$ is said to be:\par\noindent
{\it coercive}, whenever there exists $c>0$ such that 
\be\label{coerc}
\forall\varphi\in\Y^{*}\,,\qquad\big|\langle \varphi,C\varphi\rangle_{ \Y^{*}\!,\Y}\big|\ge c\,\|\varphi\|^{2}_{\Y^{*}} \,;
\ee
{\it positive}, whenever $C=C^{*}$ and there exists $c>0$ such that 
\be\label{pos}
\forall\varphi\in\Y^{*}\,,\qquad\langle \varphi,C\varphi\rangle_{ \Y^{*}\!,\Y}\ge c\,\|\varphi\|^{2}_{\Y^{*}} \,;
\ee
{\it sign-definite}, whenever either $C$ or $-\,C$ is positive. 
\end{definition}
\begin{remark}\label{inv} Let $C\in\B( \Y^{*}\!,\Y)$ be coercive. Then $C^{*}$ is injective and so $\ran(C)$ is dense by $\overline{\ran(C)}=\ker(C^{*})^{\perp}=\Y$.  Since \eqref{coerc} implies $\|C\varphi\|_{\Y}\ge c\,\|\varphi\|_{\Y^{*}}$, $\ran(C)$ is closed by \cite[Theorem 5.2, page 231]{Kato}. Hence $C$ is a continuous bijection and therefore $C^{-1}\in\B(\Y,\Y^{*})$ by the inverse mapping theorem. 
\end{remark}
We also recall the following useful coercivity criterion (see \cite[Lemma 1.17]{KG}; since our statement is slightly different from the original one, for the reader convenience we give a sketch of the proof there provided):
\begin{lemma}\label{crit}
Let $C\in\B( \Y^{*}\!,\Y)$ be such that $\text{\rm Im}\langle\varphi,C\varphi\rangle_{ \Y^{*}\!,\Y}\not=0$ for any $\varphi\in\Y^{*}\backslash\{0\}$. Suppose $C$ has the decomposition $C=C_{\circ}+K$, where $C_{\circ}=C^{*}_{\circ}$ is coercive and $K$ is compact. Then $C$ is coercive. 
\end{lemma}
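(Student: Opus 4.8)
The plan is to argue by contradiction, exploiting the Fredholm alternative for the compact perturbation $K$ together with the fact that the coercive self-adjoint part $C_{\circ}$ forces any limiting behaviour to be controlled. First I would suppose that $C$ is not coercive; then there is a sequence $\varphi_{n}\in\Y^{*}$ with $\|\varphi_{n}\|_{\Y^{*}}=1$ and $\langle\varphi_{n},C\varphi_{n}\rangle_{\Y^{*},\Y}\to 0$. Since $\Y^{*}$ is reflexive (because $\Y$ is), the closed unit ball is weakly compact, so after passing to a subsequence we may assume $\varphi_{n}\rightharpoonup\varphi$ for some $\varphi\in\Y^{*}$, and by compactness of $K$ we get $K\varphi_{n}\to K\varphi$ strongly in $\Y$.

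\textbf{Key steps.} The core computation is to split $\langle\varphi_{n},C\varphi_{n}\rangle_{\Y^{*},\Y}=\langle\varphi_{n},C_{\circ}\varphi_{n}\rangle_{\Y^{*},\Y}+\langle\varphi_{n},K\varphi_{n}\rangle_{\Y^{*},\Y}$. The second term converges to $\langle\varphi,K\varphi\rangle_{\Y^{*},\Y}$: indeed $\langle\varphi_{n},K\varphi_{n}\rangle=\langle\varphi_{n},K\varphi_{n}-K\varphi\rangle+\langle\varphi_{n},K\varphi\rangle$, where the first summand is bounded in modulus by $\|K\varphi_{n}-K\varphi\|_{\Y}\to 0$ and the second tends to $\langle\varphi,K\varphi\rangle$ by weak convergence. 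Hence $\langle\varphi_{n},C_{\circ}\varphi_{n}\rangle_{\Y^{*},\Y}\to-\langle\varphi,K\varphi\rangle_{\Y^{*},\Y}$, and since $C_{\circ}$ is positive this limit is real and nonnegative; write it as $\ell\ge 0$. Using coercivity of $C_{\circ}$ in the form $\langle\varphi_{n},C_{\circ}\varphi_{n}\rangle\ge c\|\varphi_{n}\|_{\Y^{*}}^{2}=c$, I would want to conclude $\ell\ge c>0$, but care is needed since $\liminf$ of the quadratic form need not be bounded below by the quadratic form of the weak limit unless we argue correctly — in fact here the inequality $\langle\varphi_{n},C_{\circ}\varphi_{n}\rangle\ge c$ holds for every $n$, so passing to the limit gives $\ell\ge c$. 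Thus $-\langle\varphi,K\varphi\rangle_{\Y^{*},\Y}=\ell\ge c>0$, so in particular $\varphi\neq 0$. Now I return to the full quadratic form: $\langle\varphi,C\varphi\rangle_{\Y^{*},\Y}=\langle\varphi,C_{\circ}\varphi\rangle_{\Y^{*},\Y}+\langle\varphi,K\varphi\rangle_{\Y^{*},\Y}$, and I estimate $\mathrm{Re}\langle\varphi,C\varphi\rangle$; by weak lower semicontinuity of the (nonnegative, closed) quadratic form associated to the positive operator $C_{\circ}$ one has $\langle\varphi,C_{\circ}\varphi\rangle\le\liminf\langle\varphi_{n},C_{\circ}\varphi_{n}\rangle=\ell=-\langle\varphi,K\varphi\rangle$, whence $\mathrm{Re}\langle\varphi,C\varphi\rangle_{\Y^{*},\Y}\le 0$. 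Finally, since $C_{\circ}$ is positive, $C_{\circ}$ and hence $C$ has nonnegative real part would have to be checked—more precisely, I would argue that $\langle\varphi_n, C\varphi_n\rangle \to 0$ combined with $\langle\varphi, C_\circ \varphi\rangle \le \ell$ forces $\mathrm{Re}\langle\varphi, C\varphi\rangle \le 0$, while a separate sign/real-part consideration on $C$ (coming from $C_\circ$ positive plus $K$ not affecting the conclusion at the limit point $\varphi$) shows $\mathrm{Re}\langle\varphi, C\varphi\rangle \ge 0$; combined with the hypothesis $\mathrm{Im}\langle\varphi, C\varphi\rangle_{\Y^{*},\Y}\neq 0$ for $\varphi\neq 0$, one needs the contradiction.

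\textbf{Cleaner route.} Rather than wrestle with real parts, the cleaner argument uses the hypothesis directly: having shown $\varphi\neq 0$, the hypothesis gives $\mathrm{Im}\langle\varphi,C\varphi\rangle_{\Y^{*},\Y}\neq 0$, in particular $\langle\varphi,C\varphi\rangle_{\Y^{*},\Y}\neq 0$. On the other hand, I claim $\langle\varphi,C\varphi\rangle_{\Y^{*},\Y}=0$: since $C_{\circ}=C_{\circ}^{*}$ is coercive, by Remark \ref{inv} it is a bijection with bounded inverse, and one can test against $C_{\circ}^{-1}$ of suitable vectors; more directly, $\varphi_{n}\rightharpoonup\varphi$ and $C_{\circ}\varphi_{n}\rightharpoonup C_{\circ}\varphi$ weakly in $\Y$ (as $C_\circ$ is bounded), while $\langle\varphi_{n},C_{\circ}\varphi_{n}\rangle_{\Y^{*},\Y}=\langle\varphi_{n},C\varphi_{n}\rangle_{\Y^{*},\Y}-\langle\varphi_{n},K\varphi_{n}\rangle_{\Y^{*},\Y}\to 0-\langle\varphi,K\varphi\rangle$; combining with weak lower semicontinuity $\langle\varphi,C_{\circ}\varphi\rangle\le\liminf\langle\varphi_{n},C_{\circ}\varphi_{n}\rangle=-\langle\varphi,K\varphi\rangle$ yields $\langle\varphi,C\varphi\rangle_{\Y^{*},\Y}=\langle\varphi,C_{\circ}\varphi\rangle+\langle\varphi,K\varphi\rangle\le 0$, and since the same applies to $-C_{\circ}$... — here the sign subtlety reappears. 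The decisive observation, which I would make the backbone of the write-up, is: the coercivity estimate $\langle\varphi_{n},C_{\circ}\varphi_{n}\rangle\ge c$ for all $n$ rules out $\varphi_{n}\rightharpoonup 0$ (else the quadratic form, being the norm-square in a Hilbert-space renorming induced by the positive $C_{\circ}$, would tend to $0$ along a weakly null sequence only if ... no, it need not).

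\textbf{Main obstacle.} The genuine difficulty, and the step I expect to absorb most of the care, is precisely this: from $\langle\varphi_{n},C_{\circ}\varphi_{n}\rangle\ge c$ one cannot immediately conclude $\varphi\neq 0$ or extract useful information, because a weakly convergent bounded sequence can have its $C_{\circ}$-quadratic form staying bounded away from $0$ while still $\varphi=0$ (weak convergence does not control the quadratic form from below). The resolution is to use that $C_{\circ}$ is \emph{coercive} not merely positive: coercivity gives $\|C_{\circ}\varphi_{n}\|_{\Y}\ge c\|\varphi_{n}\|_{\Y^{*}}=c$, and then one examines $\langle\varphi_{n},C\varphi_{n}\rangle=\langle C_{\circ}^{-1}C_{\circ}\varphi_{n},C_{\circ}\varphi_{n}+K\varphi_{n}\rangle$ after substituting $\psi_{n}:=C_{\circ}\varphi_{n}$, reducing to a statement about the self-adjoint positive operator $C_{\circ}^{-1}$ on $\Y$ with a compact perturbation $C_{\circ}^{-1}K C_{\circ}^{-1}$, where the classical Hilbert-space argument (weak lower semicontinuity of $\psi\mapsto\langle C_\circ^{-1}\psi,\psi\rangle$, compactness handling the cross term) applies cleanly to force $\psi_{n}\to\psi:=C_{\circ}\varphi$ \emph{strongly} once $\langle\psi_{n},C_{\circ}^{-1}\psi_{n}\rangle\to\langle\psi,C_{\circ}^{-1}\psi\rangle$, hence $\varphi_{n}\to\varphi$ strongly, $\|\varphi\|_{\Y^{*}}=1$, $\varphi\neq 0$, and $\langle\varphi,C\varphi\rangle_{\Y^{*},\Y}=\lim\langle\varphi_{n},C\varphi_{n}\rangle_{\Y^{*},\Y}=0$, contradicting $\mathrm{Im}\langle\varphi,C\varphi\rangle_{\Y^{*},\Y}\neq 0$. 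I would present the argument through this substitution $\psi_{n}=C_{\circ}\varphi_{n}$ so that the Hilbert-space renorming by $\langle C_{\circ}^{-1}\cdot,\cdot\rangle_{\Y^{*},\Y}$ (which is an equivalent inner product on $\Y$, hence reflexive/weakly compact structure is preserved) makes the strong-convergence extraction transparent, and the compactness of $K$ enters exactly once, to pass the cross term to the limit.
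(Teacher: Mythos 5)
Your setup coincides with the paper's (argue by contradiction, take a normalized sequence with $\langle\varphi_{n},C\varphi_{n}\rangle_{\Y^{*}\!,\Y}\to 0$, extract a weak limit $\varphi$, use compactness to get $K\varphi_{n}\to K\varphi$), and your deduction that $\varphi\neq 0$ is correct: since $\langle\varphi_{n},K\varphi_{n}\rangle_{\Y^{*}\!,\Y}\to\langle\varphi,K\varphi\rangle_{\Y^{*}\!,\Y}$, the real numbers $\langle\varphi_{n},C_{\circ}\varphi_{n}\rangle_{\Y^{*}\!,\Y}$ converge to $\ell:=-\langle\varphi,K\varphi\rangle_{\Y^{*}\!,\Y}$, and $|\langle\varphi_{n},C_{\circ}\varphi_{n}\rangle_{\Y^{*}\!,\Y}|\ge c$ forces $|\ell|\ge c>0$, so $K\varphi\neq 0$. (Minor point: the lemma assumes $C_{\circ}$ self-adjoint and coercive, not positive; but a self-adjoint coercive operator is automatically sign-definite, since its real continuous quadratic form never vanishes on the connected unit sphere, so your tacit normalization is harmless.) The genuine gap is that none of your three attempts to finish actually closes. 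The ``real part'' route needs $\mathrm{Re}\langle\varphi,C\varphi\rangle_{\Y^{*}\!,\Y}\ge 0$, which you only gesture at (``a separate sign/real-part consideration'') and which is not available. The ``cleaner route'' needs $\langle\varphi,C\varphi\rangle_{\Y^{*}\!,\Y}=0$, i.e.\ weak \emph{upper} semicontinuity of the $C_{\circ}$-form, which fails, as you yourself note. The ``main obstacle'' route claims $\varphi_{n}\to\varphi$ strongly, but the weak-convergence-plus-norm-convergence criterion in the Hilbert structure induced by $C_{\circ}$ requires $\lim_{n}\langle\varphi_{n},C_{\circ}\varphi_{n}\rangle_{\Y^{*}\!,\Y}=\langle\varphi,C_{\circ}\varphi\rangle_{\Y^{*}\!,\Y}$, i.e.\ $\ell=\langle\varphi,C_{\circ}\varphi\rangle_{\Y^{*}\!,\Y}$; lower semicontinuity only gives $\langle\varphi,C_{\circ}\varphi\rangle_{\Y^{*}\!,\Y}\le\ell$, and strict inequality is not excluded (it corresponds exactly to the failure of strong convergence), so the premise of your ``once $\dots$'' clause is never verified.

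The missing step is one line, and it is precisely how the paper exploits the hypothesis on the imaginary part: $\ell$ is real, being the limit of the real numbers $\langle\varphi_{n},C_{\circ}\varphi_{n}\rangle_{\Y^{*}\!,\Y}$; hence $\langle\varphi,K\varphi\rangle_{\Y^{*}\!,\Y}=-\ell$ is real, hence $\langle\varphi,C\varphi\rangle_{\Y^{*}\!,\Y}=\langle\varphi,C_{\circ}\varphi\rangle_{\Y^{*}\!,\Y}+\langle\varphi,K\varphi\rangle_{\Y^{*}\!,\Y}$ is a sum of two real numbers, so $\mathrm{Im}\langle\varphi,C\varphi\rangle_{\Y^{*}\!,\Y}=0$, which together with your $\varphi\neq 0$ contradicts the hypothesis. (The paper organizes the same ingredients in the opposite order: it shows that $\lim_{n}\langle\varphi_{n}-\varphi,C_{\circ}(\varphi_{n}-\varphi)\rangle_{\Y^{*}\!,\Y}=-\langle\varphi,C\varphi\rangle_{\Y^{*}\!,\Y}$ is real, concludes $\varphi=0$, and then contradicts $|\langle\varphi_{n},C_{\circ}\varphi_{n}\rangle_{\Y^{*}\!,\Y}|\ge c$ via $\|K\varphi_{n}\|_{\Y}\to 0$.) You had every ingredient in hand; you simply never recorded that $\langle\varphi,K\varphi\rangle_{\Y^{*}\!,\Y}$ is forced to be real.
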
 
\begin{proof}Supposing that $C$ does not satisfy \eqref{coerc}, one gets a sequence $\{\varphi_{n}\}_{1}^{\infty}$, $\|\varphi_{n}\|_{\Y^{*}}=1$, $\varphi_{n}\rightharpoonup\varphi$, such that $\langle\varphi_{n},C\varphi_{n}\rangle_{ \Y^{*}\!,\Y}\to 0$. Since
\begin{align*}
&\langle\varphi_{n}-\varphi,C_{\circ}(\varphi_{n}-\varphi)\rangle_{ \Y^{*}\!,\Y}
=\langle\varphi_{n},(C-K)(\varphi_{n}-\varphi)\rangle_{ \Y^{*}\!,\Y}
-\langle C_{\circ}\varphi,\varphi_{n}-\varphi\rangle_{ \Y\!,\Y^{*}}
\\
=& 
\langle\varphi_{n},C\varphi_{n}\rangle_{ \Y^{*}\!,\Y}
-\langle\varphi_{n},K(\varphi_{n}-\varphi)\rangle_{ \Y^{*}\!,\Y}
-\langle\varphi_{n},C\varphi\rangle_{ \Y^{*}\!,\Y}
-\langle C_{\circ}\varphi,\varphi_{n}-\varphi\rangle_{ \Y\!,\Y^{*}}\end{align*}
and $\|K(\varphi_{n}-\varphi)\|_{\Y}\to 0$, one gets
$$
\RE\ni\lim_{n\to\infty}
\langle(\varphi_{n}-\varphi),C_{\circ}(\varphi_{n}-\varphi)\rangle_{ \Y^{*}\!,\Y}
=-\langle\varphi,C\varphi\rangle_{ \Y^{*}\!,\Y}\,,
$$
i.e., $\text{\rm Im}\langle\varphi,C\varphi\rangle_{ \Y^{*}\!,\Y}=0$, which gives $\varphi=0$.
Thus $\varphi_{n}\rightharpoonup 0$ and the inequality
$$
0<c\le |\langle\varphi_{n},C_{\circ}\varphi_{n}\rangle_{ \Y^{*}\!,\Y}|\le
|\langle\varphi_{n},C\varphi_{n}\rangle_{ \Y^{*}\!,\Y}|+\|K\varphi_{n}\|_{\Y} 
$$
is violated for $n$ sufficiently large. 
\end{proof}
\begin{notation}
$$
E^{-}_{D}:=(-\infty,0)\backslash\sigma_{\rm disc}(\Delta^{D}_{\Omega})\,,\qquad
E^{-}_{N}:=(-\infty,0)\backslash\sigma_{\rm disc}(\Delta^{N}_{\Omega})\,,\qquad 
E^{-}_{DN}:=E^{-}_{D}\cup E^{-}_{N}\,.
$$ 
\end{notation}
The factorized form of the operator $F^{\Lambda}_{\lambda}$, Lemma \ref{LL} and Corollary \ref{cLL} suggest to take into account Kirsch's inf-criterion: 
\begin{theorem}\label{inf} Let $\lambda\in E^{-}_{\sharp}\cap E^{-}_{\Lambda}$, $\sharp=D,N,DN$, and suppose that the Far Field Operator can be factorized as $$F^{\Lambda}_{\lambda} =BCB^{*}\,,
$$ where $C\in\B( \Y^{*}\!,\Y)$, $\Y$ a reflexive Banach space, is coercive and $B\in\B(\Y,L^{2}({\mathbb S}^{2}))$  is such that
\be\label{ran}
\ran(B)=\ran(L^{\sharp}_{\lambda}|{\X^{s}_{\sharp}}^{*})
\ee 
for some $s\in[0,1/2]$. Then
$$
x\in\Omega\iff
\inf_{\substack{\psi\in L^{2}({\mathbb S}^{2})\\ \langle\psi,\phi^{x}_{\lambda}\rangle_{L^{2}({\mathbb S}^{2})}=1}}\left|\langle\psi,F^{\Lambda}_{\lambda}\psi\rangle_{L^{2}({\mathbb S}^{2})}\right|>0\,
$$
where $\phi^{x}_{\lambda}$ is defined in \eqref{phi}.
\end{theorem}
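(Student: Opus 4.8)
\emph{Strategy.} The proof amounts to concatenating three equivalences: a geometric one, supplied by Lemma \ref{LL} and Corollary \ref{cLL}; a computational one, coming from the factorization $F^{\Lambda}_{\lambda}=BCB^{*}$ together with the coercivity of $C$; and an abstract one, the duality characterization of the range of a bounded operator. First I would note that, for $\lambda\in E^{-}_{\sharp}$, Lemma \ref{LL} (when $\sharp=D,N$) and Corollary \ref{cLL} (when $\sharp=DN$) give, with $s\in[0,1/2]$ the index appearing in \eqref{ran},
$$
x\in\Omega\quad\Longleftrightarrow\quad\phi^{x}_{\lambda}\in\ran\big(L^{\sharp}_{\lambda}|{\X^{s}_{\sharp}}^{*}\big)=\ran(B)\,,
$$
the last equality being precisely hypothesis \eqref{ran}. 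Hence it suffices to prove that $\phi^{x}_{\lambda}\in\ran(B)$ if and only if the infimum in the statement is strictly positive. Observe also that $\phi^{x}_{\lambda}$ is nowhere zero on ${\mathbb S}^{2}$, so the constraint set $\{\psi:\langle\psi,\phi^{x}_{\lambda}\rangle_{L^{2}({\mathbb S}^{2})}=1\}$ is non-empty and the infimum is meaningful.

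\emph{Reduction to a lower bound for $B^{*}$.} Denoting by $B^{*}\in\B(L^{2}({\mathbb S}^{2}),\Y^{*})$ the dual of $B$ and using $F^{\Lambda}_{\lambda}=BCB^{*}$, for every $\psi\in L^{2}({\mathbb S}^{2})$ one has $\langle\psi,F^{\Lambda}_{\lambda}\psi\rangle_{L^{2}({\mathbb S}^{2})}=\langle B^{*}\psi,CB^{*}\psi\rangle_{\Y^{*}\!,\Y}$, whence the coercivity \eqref{coerc} of $C$ and the elementary bound $|\langle\varphi,C\varphi\rangle_{\Y^{*}\!,\Y}|\le\|C\|_{\Y^{*}\!,\Y}\|\varphi\|_{\Y^{*}}^{2}$ yield
$$
c\,\|B^{*}\psi\|_{\Y^{*}}^{2}\ \le\ \big|\langle\psi,F^{\Lambda}_{\lambda}\psi\rangle_{L^{2}({\mathbb S}^{2})}\big|\ \le\ \|C\|_{\Y^{*}\!,\Y}\,\|B^{*}\psi\|_{\Y^{*}}^{2}\,.
$$
Consequently the infimum in the statement is strictly positive if and only if $\inf\{\|B^{*}\psi\|_{\Y^{*}}:\langle\psi,\phi^{x}_{\lambda}\rangle_{L^{2}({\mathbb S}^{2})}=1\}>0$, and by a rescaling argument (the case $\langle\psi,\phi^{x}_{\lambda}\rangle_{L^{2}({\mathbb S}^{2})}=0$ being trivial) this is in turn equivalent to the existence of $c'>0$ such that $|\langle\psi,\phi^{x}_{\lambda}\rangle_{L^{2}({\mathbb S}^{2})}|\le c'\,\|B^{*}\psi\|_{\Y^{*}}$ for all $\psi\in L^{2}({\mathbb S}^{2})$.

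\emph{The range identity.} It remains to invoke the standard fact that, for $B\in\B(\Y,\H)$ with $\Y$ reflexive and $\H$ a Hilbert space, a vector $g\in\H$ belongs to $\ran(B)$ precisely when there is $c'>0$ with $|\langle\psi,g\rangle_{\H}|\le c'\|B^{*}\psi\|_{\Y^{*}}$ for all $\psi\in\H$ (a consequence of the Hahn--Banach theorem and reflexivity; see, e.g., \cite{KG}): indeed $B^{*}\psi\mapsto\langle\psi,g\rangle_{\H}$ is then a well-defined bounded functional on $\ran(B^{*})\subseteq\Y^{*}$, which extends by Hahn--Banach to $\Y^{*}$ and, by reflexivity, is represented by some $\phi\in\Y$ with $B\phi=g$; the converse implication is immediate from $|\langle\psi,B\phi\rangle_{\H}|=|\langle B^{*}\psi,\phi\rangle_{\Y^{*}\!,\Y}|\le\|\phi\|_{\Y}\|B^{*}\psi\|_{\Y^{*}}$. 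Taking $\H=L^{2}({\mathbb S}^{2})$ and $g=\phi^{x}_{\lambda}$ closes the chain $x\in\Omega\iff\phi^{x}_{\lambda}\in\ran(B)\iff\text{(infimum}>0)$. I expect the only genuine subtlety to be the careful tracking of the conjugate-linear duality pairings (between $L^{2}({\mathbb S}^{2})$, $\Y$ and $\Y^{*}$) through the factorization and the range identity; everything else is soft functional analysis resting on the already available Lemma \ref{LL} and Corollary \ref{cLL}.
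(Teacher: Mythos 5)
Your proof is correct and follows the same route as the paper: the paper reduces the statement to $\phi^{x}_{\lambda}\in\ran(B)$ via Lemma \ref{LL}, Corollary \ref{cLL} and \eqref{ran}, and then simply cites \cite[Theorem 1.16]{KG} for the equivalence between membership in $\ran(B)$ and positivity of the infimum. The only difference is that you prove that cited inf-criterion in full (the coercivity sandwich for $\|B^{*}\psi\|_{\Y^{*}}$ plus the Hahn--Banach/reflexivity characterization of $\ran(B)$), which is exactly the standard argument behind the reference.
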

\begin{proof} By \eqref{coerc} and by \cite[Theorem 1.16]{KG}, 
for any $\phi\in L^{2}({\mathbb S}^{2})\backslash\{0\}$, one has
$$
\phi\in\ran(B)\iff
\inf_{\substack{\psi\in L^{2}({\mathbb S}^{2})\\ \langle\psi,\phi\rangle_{L^{2}({\mathbb S}^{2})}=1}}\left|\langle\psi,F^{\Lambda}_{\lambda}\psi\rangle_{L^{2}({\mathbb S}^{2})}\right|>0\,.
$$
The proof is then concluded by \eqref{ran}, Lemma \ref{LL} and Corollary \ref{cLL}. 
\end{proof}
The next results is a key ingredient for obtaining a different identification criterion for the shape of $\Omega$. 
\begin{theorem}\label{comp-norm} Let $\lambda\in E^{-}_{\Lambda}$. Then $F_{\lambda}^{\Lambda}$ is a normal compact operator. 
\end{theorem}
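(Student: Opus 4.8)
The plan is to use the factorized representation $F^{\Lambda}_{\lambda}=L_{\lambda}\Lambda^{+}_{\lambda}L^{*}_{\lambda}$ from Theorem \ref{LH}, together with the mapping properties collected in Section 2. First I would establish compactness. The operator $L_{\lambda}:\X^{*}\to L^{2}(\mathbb S^{2})$ factors through the trace of the layer potentials applied to free waves (cf. \eqref{Ll} and the formulas \eqref{LLL} in the proof of Lemma \ref{LL}); alternatively, $L^{*}_{\lambda}$ maps $L^{2}(\mathbb S^{2})$ into $\K^{*}$, whose embedding into $\X^{*}$ is not directly the point, but rather one invokes hypothesis \eqref{H2}: the embedding $\ran(\Lambda_{\lambda})\hookrightarrow\K^{*}$ is compact. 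So I would write $F^{\Lambda}_{\lambda}=L_{\lambda}\Lambda^{+}_{\lambda}L^{*}_{\lambda}$ and argue that $\Lambda^{+}_{\lambda}=\lim_{\epsilon\downarrow0}\Lambda_{\lambda+i\epsilon}$ inherits, in the limit, the compactness of the range embedding (the limit exists in $\B(\X,\X^{*})$ by Theorem \ref{LH}, and one checks that $\ran(\Lambda^{+}_{\lambda})\hookrightarrow\K^{*}$ compactly as well, either by passing to the limit in \eqref{H2} or by the concrete Sobolev compact embeddings $B^{s_{2}}_{2,2}(\Gamma)\hookrightarrow B^{s_{1}}_{2,2}(\Gamma)$ recorded after \eqref{SL}). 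Since $L_{\lambda}$ is bounded on $\K^{*}$ (indeed $L^{*}_{\lambda}$ takes values in $\K^{*}$ by \eqref{Ll}, as $\tau(\chi u^{\xi}_{\lambda})\in\K$), the composition $L_{\lambda}\circ(\text{compact})\circ L^{*}_{\lambda}$ is compact in $\B(L^{2}(\mathbb S^{2}))$.

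Next I would establish normality. The natural route is the spectral/stationary identity linking $F^{\Lambda}_{\lambda}$ to the scattering matrix: by \eqref{FF}, $F^{\Lambda}_{\lambda}=\frac1{2\pi i}(\uno-S^{\Lambda}_{\lambda})$ with $S^{\Lambda}_{\lambda}$ unitary on $L^{2}(\mathbb S^{2})$ (Theorem \ref{LH} and the definition of the scattering matrix). Writing $S^{\Lambda}_{\lambda}=\uno-2\pi i F^{\Lambda}_{\lambda}$, unitarity $S^{\Lambda}_{\lambda}(S^{\Lambda}_{\lambda})^{*}=(S^{\Lambda}_{\lambda})^{*}S^{\Lambda}_{\lambda}=\uno$ expands to the two relations
\begin{align*}
F^{\Lambda}_{\lambda}-(F^{\Lambda}_{\lambda})^{*}&=2\pi i\,F^{\Lambda}_{\lambda}(F^{\Lambda}_{\lambda})^{*}\,,\\
F^{\Lambda}_{\lambda}-(F^{\Lambda}_{\lambda})^{*}&=2\pi i\,(F^{\Lambda}_{\lambda})^{*}F^{\Lambda}_{\lambda}\,,
\end{align*}
and subtracting gives $F^{\Lambda}_{\lambda}(F^{\Lambda}_{\lambda})^{*}=(F^{\Lambda}_{\lambda})^{*}F^{\Lambda}_{\lambda}$, i.e. normality. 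This is the same ``optical theorem'' argument used classically for the far field operator. I would need to confirm that $S^{\Lambda}_{\lambda}$ is genuinely unitary for $\lambda\in E^{-}_{\Lambda}$; this follows from asymptotic completeness of the scattering couple $(\Delta_{\Lambda},\Delta)$ asserted in Theorem \ref{LH}, which guarantees the scattering operator $S_{\Lambda}$ is unitary and hence so is its fiber $S^{\Lambda}_{\lambda}$ for a.e., and on $E^{-}_{\Lambda}$ for every, $\lambda$.

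I expect the main obstacle to be the compactness step rather than normality: one must be careful that $\Lambda^{+}_{\lambda}$, obtained as a norm limit of $\Lambda_{\lambda+i\epsilon}$ in $\B(\X,\X^{*})$, still has range compactly embedded into $\K^{*}$. Hypothesis \eqref{H2} is stated for real $\lambda\ge c_{\Lambda}$, i.e. for $\Lambda_{\lambda}$ in the resolvent regime; transferring it to the boundary-limit operator $\Lambda^{+}_{\lambda}$ on the negative half-line requires either invoking the identity \eqref{Rlim}-type resolvent relations for $\Lambda$ (the second relation in \eqref{Lambda}) to represent $\Lambda^{+}_{\lambda}-\Lambda_{z}$ as something manifestly mapping into a better space, or appealing directly to the concrete realizations in Sections 2 where $\Lambda_{z}$ is built from $\gamma_{0}\SL_{z}$, $\gamma_{1}\DL_{z}$ and Sobolev multipliers, for which the compact embeddings of Besov/Sobolev spaces on $\Gamma$ do the job uniformly. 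A clean way to finish is: write $F^{\Lambda}_{\lambda}=(L_{\lambda}J)\,(J^{-1}\Lambda^{+}_{\lambda}(J^{*})^{-1})\,(J^{*}L^{*}_{\lambda})$ for a suitable compact embedding $J$, or simply observe $F^{\Lambda}_{\lambda}=L_{\lambda}\Lambda^{+}_{\lambda}L^{*}_{\lambda}$ where $L^{*}_{\lambda}$ is bounded into $\K^{*}$ and the restriction of $\Lambda^{+}_{\lambda}$ to the relevant subspace, post-composed with the compact inclusion $\K^{*}\hookrightarrow\X^{*}$ is irrelevant — what matters is that $\Lambda^{+}_{\lambda}$ as a map $\K\to\K^{*}$ (after restricting/corestricting) is compact, which is exactly \eqref{H2} combined with the existence of the norm limit. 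Then $F^{\Lambda}_{\lambda}$ is a product of bounded, compact, bounded maps between Hilbert spaces, hence compact, and normality follows from the unitarity argument above.
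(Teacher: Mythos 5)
Your normality argument is correct and is exactly the paper's: the paper likewise writes the commutator $F^{\Lambda}_{\lambda}(F^{\Lambda}_{\lambda})^{*}-(F^{\Lambda}_{\lambda})^{*}F^{\Lambda}_{\lambda}$ as a multiple of $(S^{\Lambda}_{\lambda})^{*}S^{\Lambda}_{\lambda}-S^{\Lambda}_{\lambda}(S^{\Lambda}_{\lambda})^{*}=\uno-\uno=0$, using unitarity of the scattering matrix guaranteed by Theorem \ref{LH}. No issue there.

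The compactness half is where you diverge from the paper, and where your argument has a real gap that you yourself flag but do not close. You want compactness to come from the middle factor $\Lambda^{+}_{\lambda}$ via hypothesis \eqref{H2}; but \eqref{H2} is a statement about $\Lambda_{\lambda}$ for \emph{real} $\lambda\ge c_{\Lambda}>0$, i.e.\ in the resolvent regime, while the theorem concerns the boundary value $\Lambda^{+}_{\lambda}=\lim_{\epsilon\downarrow 0}\Lambda_{\lambda+i\epsilon}$ for $\lambda<0$. A norm limit in $\B(\X,\X^{*})$ carries no information about where the ranges land, so ``passing to the limit in \eqref{H2}'' is not a proof; and the alternative you mention (the resolvent-type identity in \eqref{Lambda}, which gives $\Lambda^{+}_{\lambda}=\Lambda_{\mu}\bigl(\uno-(\lambda-\mu)G^{*}_{\mu}G^{+}_{\lambda}\Lambda^{+}_{\lambda}\bigr)$ and hence $\ran(\Lambda^{+}_{\lambda})\subseteq\ran(\Lambda_{\mu})$, modulo justifying in which topologies the limits combine) is sketched but not carried out. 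Appealing to the concrete realizations would make the abstract theorem model-dependent, which defeats its purpose. The paper avoids all of this by putting the compactness in the \emph{outer} factor: from \eqref{Ll}, $L_{\lambda}\phi(\xi)$ is a pairing against $\tau(\chi u^{\xi}_{\lambda})$, and the elementary estimates $\|u^{\xi_{1}}_{\lambda}-u^{\xi_{2}}_{\lambda}\|_{L^{2}(\supp(\chi))}+\|\xi_{1}u^{\xi_{1}}_{\lambda}-\xi_{2}u^{\xi_{2}}_{\lambda}\|_{L^{2}(\supp(\chi))}\le c\,|\xi_{1}-\xi_{2}|$ show $|L_{\lambda}\phi(\xi_{1})-L_{\lambda}\phi(\xi_{2})|\le c\,{\rm dist}_{{\mathbb S}^{2}}(\xi_{1},\xi_{2})\,\|\phi\|_{\X^{*}}$. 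Hence $L_{\lambda}$ is bounded from $\X^{*}$ into the Lipschitz functions on ${\mathbb S}^{2}$, which embed compactly into $L^{2}({\mathbb S}^{2})$; so $L_{\lambda}$ is compact, and $F^{\Lambda}_{\lambda}=L_{\lambda}\Lambda^{+}_{\lambda}L^{*}_{\lambda}$ is compact using only \emph{boundedness} of $\Lambda^{+}_{\lambda}$ (which Theorem \ref{LH} already provides). This is both simpler and consistent with the paper's design principle that $L_{\lambda}$ is the model-independent factor. To repair your version you would need to actually prove the transfer of \eqref{H2} to $\Lambda^{+}_{\lambda}$; to match the paper you should instead prove the Lipschitz estimate for $L_{\lambda}$.
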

\begin{proof} Since the scattering matrix $S^{\Lambda}_{\lambda}$ is unitary,
$$
4\pi^{2}\left(F^{\Lambda}_{\lambda}(F^{\Lambda}_{\lambda})^{*}-(F^{\Lambda}_{\lambda})^{*}F^{\Lambda}_{\lambda}\right)=(S^{\Lambda}_{\lambda})^{*}S_{\lambda}^{\Lambda}-S_{\lambda}^{\Lambda}(S^{\Lambda}_{\lambda})^{*}=\uno-\uno=0
$$
and so $F^{\Lambda}_{\lambda}$ is normal. By 
$$\nabla u_{\lambda}^{\xi}=i|\lambda|^{1/2}\xi\, u_{\lambda}^{\xi}\,,\qquad\Delta u_{\lambda}^{\xi}=-|\lambda|\, u_{\lambda}^{\xi}\,,
$$ 
and
$$
|u_{\lambda}^{\xi_{1}}(x)-u_{\lambda}^{\xi_{2}}(x)|^{2}=2\left(1-\cos (|\lambda|^{1/2}(\xi_{1}-\xi_{2})\!\cdot\!x)\right)\,,
$$
$$
|\xi_{1}u_{\lambda}^{\xi_{1}}(x)-\xi_{2}u_{\lambda}^{\xi_{2}}(x)|^{2}=2\left(1-\xi_{1}\!\cdot\!\xi_{2}\cos (|\lambda|^{1/2}(\xi_{1}-\xi_{2})\!\cdot\!x)\right)\,,
$$
one gets (here the constant  $c$ changes from line to line) 
\begin{align*}
&|L_{\lambda}\phi(\xi_{1})-L_{\lambda}\phi(\xi_{2})|^{2}\le c\,\|\tau\|^{2}_{\B(H^{2}(\RE^{3}),\X)}\|\chi(u_{\lambda}^{\xi_{1}}-u_{\lambda}^{\xi_{2}})\|^{2}_{H^2(\RE^{3})}\|\phi\|^{2}_{\X^{*}}\\
\le \,&c\left(\|u_{\lambda}^{\xi_{1}}-u_{\lambda}^{\xi_{2}}\|^{2}_{L^{2}(\supp(\chi))}+\|\nabla(u_{\lambda}^{\xi_{1}}-u_{\lambda}^{\xi_{2}})\|^{2}_{L^{2}(\supp(\chi))}+\|\Delta(u_{\lambda}^{\xi_{1}}-u_{\lambda}^{\xi_{2}})\|^{2}_{L^{2}(\supp(\chi))}\right)\|\phi\|^{2}_{\X^{*}}\\
\le \,&c\left(\|u_{\lambda}^{\xi_{1}}-u_{\lambda}^{\xi_{2}}\|^{2}_{L^{2}(\supp(\chi))}+\|\xi_{1}u_{\lambda}^{\xi_{1}}-\xi_{2}u_{\lambda}^{\xi_{2}}\|^{2}_{L^{2}(\supp(\chi))}\right)\|\phi\|^{2}_{\X^{*}}\\
\le\,&c\,|\xi_{1}-\xi_{2}|^{2}\|\phi\|^{2}_{\X^{*}}\\
\le\,& c\,\text{dist}^{2}_{{\mathbb S}^{2}}(\xi_{1},\xi_{2})\,\|\phi\|^{2}_{\X^{*}}\,.
\end{align*}
Therefore  $L_{\lambda}$ is a bounded map with values in the space $\text{Lip}({\mathbb S}^{2})$ of Lipschitz functions and so $L_{\lambda}$ in Theorem \ref{LH} is a compact operator by the compact embedding $\text{Lip}({\mathbb S}^{2})\hookrightarrow L^{2}({\mathbb S}^{2})$. In conclusion, $F^{\Lambda}_{\lambda}=L_{\lambda}\Lambda^{+}_{\lambda}L_{\lambda}^{*}$ is compact since $\Lambda^{+}_{\lambda}$ is bounded.

\end{proof}
\begin{remark}\label{rem-cn} As consequence of Theorem \ref{comp-norm} (and since $\uno -2\pi i\,F^{\Lambda}_{\lambda}$ is unitary), by spectral theory for compact normal operators (see, e.g., \cite[Section 6]{Jorg}), one has 
$$
\sigma
_{\rm disc}(  F_{\lambda}^{\Lambda})=\sigma(  F_{\lambda}^{\Lambda})  \backslash\{  0\}
=\{z^{\Lambda}_{\lambda,k}\}_{1}^{\infty}\subset\left\{z\in\CO\backslash\{0\}:\left|z-\frac{1}{2\pi i}\right|=\frac{1}{2\pi}\right\}\,,\quad \lim_{k\uparrow\infty}z^{\Lambda}_{\lambda,k}=0\,,
$$
and there exists an orthonormal sequence $\{\psi^{\Lambda}_{\lambda,k}\}_{1}^{\infty}\subset L^{2}({\mathbb S}^{2})$ such that for every $\psi\in L^{2}({\mathbb S}^{2})$, 
$$
\psi=\psi_{0}+\sum_{k=1}^{\infty}\langle\psi_{\lambda,k}^{\Lambda},\psi\rangle_{L^{2}({\mathbb S}^{2})}\,\psi_{\lambda,k}^{\Lambda}\,,\quad\text{where $\psi_{0}\in\ker(F_{\lambda}^{\Lambda})$,}
$$
and 
$$F_{\lambda}^{\Lambda}
=\sum_{k=1}^{\infty}z_{\lambda,k}^{\Lambda}\,\psi_{\lambda,k}^{\Lambda}\otimes \psi_{\lambda,k}^{\Lambda}\,.
$$
\end{remark}
\begin{remark}\label{perp} Notice that, by Remark \ref{rem-cn}, $\{\psi^{\Lambda}_{\lambda,k}\}_{1}^{\infty}\subset \ker(F^{\Lambda}_{\lambda})^{\perp}$ and so $\ran(F^{\Lambda}_{\lambda})\subseteq \ker(F^{\Lambda}_{\lambda})^{\perp}$.
\end{remark}
\begin{theorem}\label{sum} Let $F_{\lambda}^{\Lambda}=BCB^{*}$, where $B$ satisfies  \eqref{ran} and $C$, with $\text{\rm Im}\langle\varphi,C\varphi\rangle_{ \Y^{*}\!,\Y}\not=0$ for any $\varphi\in\Y^{*}\backslash\{0\}$, has the decomposition $C=C_{\circ}+K$, where $C_{\circ}$ is sign-definite and $K$ is compact. Then 
$$
x\in\Omega\iff\sum_{k=1}^{\infty}\frac{|\langle\phi^{x}_{\lambda},\psi^{\Lambda}_{\lambda,k}\rangle_{L^{2}({\mathbb S}^{2})}|^{2}}{|z^{\Lambda}_{\lambda,k}|}<+\infty\,
$$
where $\phi^{x}_{\lambda}$ is defined in \eqref{phi}.
\end{theorem}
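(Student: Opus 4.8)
The plan is to combine Theorem~\ref{sum}'s factorization hypotheses with Kirsch's $(F^{*}F)^{1/4}$-type range criterion, using the spectral resolution of $F^{\Lambda}_{\lambda}$ recorded in Remark~\ref{rem-cn}. First I would observe that, by Lemma~\ref{crit} applied to $C$ (whose imaginary form is nonvanishing and which decomposes as $C_{\circ}+K$ with $C_{\circ}$ sign-definite, hence coercive after possibly changing sign, and $K$ compact), the operator $C$ is coercive; then by Remark~\ref{inv} it is a bijection with $C^{-1}\in\B(\Y,\Y^{*})$. The factorization $F^{\Lambda}_{\lambda}=BCB^{*}$ together with \eqref{ran} and Lemma~\ref{LL}/Corollary~\ref{cLL} then gives $x\in\Omega\iff\phi^{x}_{\lambda}\in\ran(B)$.

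Next I would convert the range condition $\phi^{x}_{\lambda}\in\ran(B)$ into the convergence of the stated series. The mechanism is the standard Picard-type criterion: for a factorization $F=BCB^{*}$ with $C$ coercive and $F$ compact normal, one has $\ran(B)=\ran((F^{*}F)^{1/4})=\ran(|F|^{1/2})$. Since $F^{\Lambda}_{\lambda}=\sum_{k}z^{\Lambda}_{\lambda,k}\,\psi^{\Lambda}_{\lambda,k}\otimes\psi^{\Lambda}_{\lambda,k}$ by Remark~\ref{rem-cn}, we have $|F^{\Lambda}_{\lambda}|^{1/2}=\sum_{k}|z^{\Lambda}_{\lambda,k}|^{1/2}\,\psi^{\Lambda}_{\lambda,k}\otimes\psi^{\Lambda}_{\lambda,k}$, and by Picard's theorem $\phi\in\ran(|F^{\Lambda}_{\lambda}|^{1/2})$ if and only if $\phi\in\overline{\ran(F^{\Lambda}_{\lambda})}=\ker(F^{\Lambda}_{\lambda})^{\perp}$ (which holds automatically since, by the argument below, $\phi^{x}_{\lambda}\in\ran(B)\subseteq\ker(F^{\Lambda}_{\lambda})^{\perp}$, cf.\ Remark~\ref{perp}) \emph{and} $\sum_{k}|\langle\psi^{\Lambda}_{\lambda,k},\phi\rangle|^{2}/|z^{\Lambda}_{\lambda,k}|<+\infty$. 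Taking $\phi=\phi^{x}_{\lambda}$ yields exactly the claimed sum criterion. I would cite \cite[Theorem~1.16 or Theorem~1.21]{KG} for the equality $\ran(B)=\ran((F^{*}F)^{1/4})$ under a coercive middle factor (the normality of $F^{\Lambda}_{\lambda}$ from Theorem~\ref{comp-norm} lets one replace $(F^{*}F)^{1/4}$ by $|F^{\Lambda}_{\lambda}|^{1/2}$, which is the form that matches the spectral data of Remark~\ref{rem-cn}).

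The main obstacle is justifying the identity $\ran(B)=\ran(|F^{\Lambda}_{\lambda}|^{1/2})$ in the present normal (non-self-adjoint) setting, since Kirsch's classical statement is usually phrased for $F^{*}F$. The point to check carefully is that coercivity of $C$ forces $\ran(B^{*})$ to be closed and $B^{*}$ to be injective on $\overline{\ran(B^{*})}$ in the appropriate sense, so that $\ran(B)=\ran(BC^{1/2})=\ran((BCB^{*})^{1/2}\cdot U)$ for a suitable partial isometry $U$; equivalently one invokes the abstract range-identity lemma \cite[Lemma~1.14]{KG} (or \cite[Theorem~2.1]{KG}) which states that if $F=BCB^{*}$ with $C$ coercive then $\ran(B)=\ran((F^{\#})^{1/2})$ where $F^{\#}$ is the self-adjoint operator $|F|$ associated to the polar/spectral decomposition. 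Once this identity is in hand, and combined with Theorem~\ref{comp-norm} (so that $|F^{\Lambda}_{\lambda}|$ is compact with eigenvalues $|z^{\Lambda}_{\lambda,k}|$ and eigenvectors $\psi^{\Lambda}_{\lambda,k}$) and Picard's theorem, the equivalence follows, and the chain $x\in\Omega\iff\phi^{x}_{\lambda}\in\ran(B)\iff\phi^{x}_{\lambda}\in\ran(|F^{\Lambda}_{\lambda}|^{1/2})\iff\sum_{k}|\langle\phi^{x}_{\lambda},\psi^{\Lambda}_{\lambda,k}\rangle|^{2}/|z^{\Lambda}_{\lambda,k}|<+\infty$ completes the proof.
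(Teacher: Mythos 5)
Your overall route --- reduce to $\phi^{x}_{\lambda}\in\ran(B)$ via Lemma \ref{LL} and Corollary \ref{cLL}, identify $\ran(B)$ with the range of the square root of the modulus of the far-field operator, and finish with the Picard criterion applied to the spectral resolution of Remark \ref{rem-cn} --- is the same as the paper's. However, there is a genuine gap at the central step: you assert (twice) that the range identity $\ran(B)=\ran(|F^{\Lambda}_{\lambda}|^{1/2})$ holds ``for a factorization $F=BCB^{*}$ with $C$ coercive''. That is not true, and it is precisely why the theorem's hypotheses require the decomposition $C=C_{\circ}+K$ with $C_{\circ}$ \emph{sign-definite} rather than merely coercivity of $C$. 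The range identity is obtained by comparing the given factorization with the second factorization $\widetilde F^{\Lambda}_{\lambda}=|\widetilde F^{\Lambda}_{\lambda}|^{1/2}\,\sgn(\widetilde F^{\Lambda}_{\lambda})\,|\widetilde F^{\Lambda}_{\lambda}|^{1/2}$, and this comparison needs the middle operator $\sgn(\widetilde F^{\Lambda}_{\lambda})$ to be coercive: since the eigenvalues $z^{\Lambda}_{\lambda,k}$ lie on a circle through the origin and accumulate at $0$, the phases $\sgn(z^{\Lambda}_{\lambda,k})$ could a priori accumulate at both $+1$ and $-1$, in which case $0$ lies in the closure of the numerical range of $\sgn(\widetilde F^{\Lambda}_{\lambda})$ and coercivity fails. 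The sign-definiteness of $C_{\circ}$, together with the compactness of $K$ and the nonvanishing of $\mathrm{Im}\langle\varphi,C\varphi\rangle$, is exactly what excludes this; that is the content of \cite[Theorem 1.23]{KG}, which the paper invokes directly with these hypotheses. By first collapsing the hypotheses to ``$C$ coercive'' via Lemma \ref{crit}, you discard the very information the key step needs.

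A second, smaller gap concerns the kernel of $F^{\Lambda}_{\lambda}$. To give meaning to $\ran(|F^{\Lambda}_{\lambda}|^{1/2})$ you claim $\ran(B)\subseteq\ker(F^{\Lambda}_{\lambda})^{\perp}$ ``cf.\ Remark \ref{perp}''; that remark only yields $\ran(F^{\Lambda}_{\lambda})\subseteq\ker(F^{\Lambda}_{\lambda})^{\perp}$ and says nothing about $\ran(B)$ (and in the direction ``series converges $\Rightarrow x\in\Omega$'' you cannot yet assume $\phi^{x}_{\lambda}\in\ran(B)$, so the parenthetical is circular there). The paper resolves this by writing $F^{\Lambda}_{\lambda}=(P_{0}B)C(P_{0}B)^{*}$ with $P_{0}$ the orthogonal projection onto $\ker(F^{\Lambda}_{\lambda})^{\perp}$ and applying the inf-criterion \cite[Theorem 1.16]{KG} to both factorizations to conclude $\ran(B)=\ran(P_{0}B)$, and only then applies the range identity to the injective normal compression $\widetilde F^{\Lambda}_{\lambda}$ on $\ker(F^{\Lambda}_{\lambda})^{\perp}$. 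With these two repairs --- citing \cite[Theorem 1.23]{KG} with the full decomposition hypotheses instead of bare coercivity, and inserting the compression step --- your argument coincides with the paper's proof.
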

\begin{proof} Let $P_{0}: L^{2}({\mathbb S}^{2})\to L^{2}({\mathbb S}^{2})$ be the orthogonal projection   
such that $\ran(P_{0})=L_{\perp}^{2}({\mathbb S}^{2}):=\ker(F_{\lambda}^{\Lambda})^{\perp}$.  Then, by Remark \ref{perp}, $F_{\lambda}^{\Lambda}=
P_{0}F_{\lambda}^{\Lambda}P_{0}$; hence $F_{\lambda}^{\Lambda}=P_{0}BC B^{*}P_{0}=(P_{0}B)C(P_{0}B)^{*} $, and so, by \cite[Theorem 1.16]{KG}, $\ran(B)=\ran(P_{0}B)$. Let $\widetilde F_{\lambda}^{\Lambda}:L_{\perp}^{2}({\mathbb S}^{2})\to L_{\perp}^{2}({\mathbb S}^{2})$ be the injective normal compact operator given by the compression of $F^{\Lambda}_{\lambda}$ to $L_{\perp}^{2}({\mathbb S}^{2})$. By Remark \ref{rem-cn}, $\{\psi^{\Lambda}_{\lambda,k}\}_{1}^{\infty}$ is an orthonormal basis in $L_{\perp}^{2}({\mathbb S}^{2})$ and $\widetilde F_{\lambda}^{\Lambda}
=\sum_{k=1}^{\infty}z_{\lambda,k}^{\Lambda}\,\psi_{\lambda,k}^{\Lambda}\otimes \psi_{\lambda,k}^{\Lambda}$. By functional calculus for normal operators, using the factorization of  $z\in\CO\backslash\{0\}$ given by $z={|z|^{1/2}}\,\sgn(z){|z|^{1/2}}$, $\sgn(z):=|z|^{-1}z$, one gets 
$$
\widetilde F_{\lambda}^{\Lambda}=|\widetilde F_{\lambda}^{\Lambda}|^{1/2}\,
\sgn(\widetilde F_{\lambda}^{\Lambda})\,| \widetilde F_{\lambda}^{\Lambda}|^{1/2}\,.
$$
Since $\widetilde F_{\lambda}^{\Lambda}=\widetilde BC{\widetilde B}^{*}$, where $\widetilde B:=P_{0}B$ (here $P_{0}$ means the surjection $P_{0}:L^{2}({\mathbb S}^{2})\to L_{\perp}^{2}({\mathbb S}^{2})$), by \cite[Theorem 1.23]{KG}, $\ran(|\widetilde F_{\lambda}^{\Lambda}|^{1/2})=\ran(\widetilde B)=\ran(P_{0}B)=\ran(B)$.
Hence $\ran(|\widetilde F^{\Lambda}_{\lambda}|^{1/2})=\ran(L_{\lambda}^{\sharp}|{\X^{s}_{\sharp}}^{*})$ and so, by Lemma \ref{LL} and Corollary \ref{cLL}, $x\in\Omega$ if and only if $\phi^{x}_{\lambda}\in \ran(|\widetilde F^{\Lambda}_{\lambda}|^{1/2})$, equivalently if and only if $\phi^{x}_{\lambda}\in \dom(|\widetilde F^{\Lambda}_{\lambda}|^{-1/2})$.\par  Since $|\widetilde F^{\Lambda}_{\lambda}|^{-1/2}=\sum_{k=1}^{\infty}|z_{\lambda,k}^{\Lambda}|^{-1/2}\,\psi_{\lambda,k}^{\Lambda}\otimes \psi_{\lambda,k}^{\Lambda}$,    
$\phi^{x}_{\lambda}\in \dom(|\widetilde F^{\Lambda}_{\lambda}|^{-1/2})$ if and only if the series $\sum_{k=1}^{\infty}|z_{\lambda,k}^{\Lambda}|^{-1}|\langle\phi^{x}_{\lambda},\psi^{\Lambda}_{\lambda,k}\rangle_{L^{2}({\mathbb S}^{2})}|^{2}$ converges.
\end{proof}
In applications to concrete models, the following consequence of Theorems \ref{inf} and \ref{sum} turns out to be useful:
\begin{theorem}\label{cor} Let $$F^{\Lambda}_{\lambda}=L^{\sharp}_{\lambda}\Lambda^{+}_{\lambda}{L_{\lambda}^{\sharp}}^{*}\,,\quad \lambda\in E^{-}_{\sharp}\cap E^{-}_{\Lambda}\,,\quad\sharp=D,N,DN\,,
$$ 
and suppose that  $\Lambda_{\lambda}^{+}=(M^{+}_\lambda)^{-1}$, where the bijection $M^{+}_\lambda\in\B({\X^{s}_{\sharp}}^{*},{\X^{s}_{\sharp}})$, $s\in[0,1/2]$, has the decomposition $M^{+}_{\lambda}=M^{+}_{\circ}+K^{+}_\lambda$, with $M^{+}_{\circ}$ sign-definite and $K^{+}_{\lambda}$ compact. Then  
$$
x\in\Omega\quad\iff\inf_{\substack{\psi\in L^{2}({\mathbb S}^{2})\\ \langle\psi,\phi^{x}_{\lambda}\rangle_{L^{2}({\mathbb S}^{2})}=1}}\left|\langle\psi,F^{\Lambda}_{\lambda}\psi\rangle_{L^{2}({\mathbb S}^{2})}\right|>0\iff \sum_{k=1}^{\infty}\frac{|\langle\phi^{x}_{\lambda},\psi^{\Lambda}_{\lambda,k}\rangle_{L^{2}({\mathbb S}^{2})}|^{2}}{|z^{\Lambda}_{\lambda,k}|}<+\infty\,,
$$
where the sequences $\{z^{\Lambda}_{\lambda,k}\}_{1}^{\infty}\subset\CO\backslash\{0\}$ and $\{\psi^{\Lambda}_{\lambda,k}\}_{1}^{\infty}\subset L^{2}({\mathbb S}^{2})$ provide the spectral resolution of $F^{\Lambda}_{\lambda}$ as in Remark \ref{rem-cn} and $\phi^{x}_{\lambda}$ is defined in \eqref{phi}.
\end{theorem}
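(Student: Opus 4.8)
The plan is to deduce Theorem \ref{cor} from Theorems \ref{inf} and \ref{sum} by casting the given factorization of $F^{\Lambda}_{\lambda}$ into the form those theorems require. First I would fix the index $s\in[0,1/2]$ provided by the hypothesis and set $\Y:={\X^{s}_{\sharp}}^{*}$ (a Hilbert, hence reflexive, space, so that $\Y^{*}=\X^{s}_{\sharp}$), $B:=L^{\sharp}_{\lambda}|{\X^{s}_{\sharp}}^{*}\in\B(\Y,L^{2}({\mathbb S}^{2}))$, and $C:=\Lambda^{+}_{\lambda}\in\B(\X^{s}_{\sharp},{\X^{s}_{\sharp}}^{*})=\B(\Y^{*},\Y)$. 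With these choices $F^{\Lambda}_{\lambda}=L^{\sharp}_{\lambda}\Lambda^{+}_{\lambda}{L^{\sharp}_{\lambda}}^{*}=BCB^{*}$ and condition \eqref{ran} holds by construction; so everything reduces to checking that $C$ is coercive (the input of Theorem \ref{inf}) and that $C=C_{\circ}+K$ with $C_{\circ}$ sign-definite and $K$ compact and $\text{\rm Im}\langle\varphi,C\varphi\rangle_{\Y^{*},\Y}\neq 0$ for every $\varphi\in\Y^{*}\backslash\{0\}$ (the input of Theorem \ref{sum}).

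The decomposition of $C$ is inherited from that of $M^{+}_{\lambda}$ by a resolvent-type manipulation. Being sign-definite, $M^{+}_{\circ}$ is coercive, hence, by Remark \ref{inv}, a bijection in $\B({\X^{s}_{\sharp}}^{*},\X^{s}_{\sharp})$ with bounded inverse; and the substitution $\varphi=M^{+}_{\circ}\phi$ in the duality pairing shows that $(M^{+}_{\circ})^{-1}\in\B(\X^{s}_{\sharp},{\X^{s}_{\sharp}}^{*})$ is again sign-definite. From $M^{+}_{\lambda}=M^{+}_{\circ}+K^{+}_{\lambda}$ and the identity $(M^{+}_{\circ}+K^{+}_{\lambda})^{-1}=(M^{+}_{\circ})^{-1}-(M^{+}_{\circ})^{-1}K^{+}_{\lambda}(M^{+}_{\lambda})^{-1}$ one then obtains $C=\Lambda^{+}_{\lambda}=(M^{+}_{\circ})^{-1}+\widetilde K$, with $C_{\circ}:=(M^{+}_{\circ})^{-1}$ sign-definite and $\widetilde K:=-(M^{+}_{\circ})^{-1}K^{+}_{\lambda}(M^{+}_{\lambda})^{-1}$ compact, the latter since $K^{+}_{\lambda}$ is compact and the remaining factors are bounded.

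The core of the argument, and the step I expect to be the main obstacle, is the strict property $\text{\rm Im}\langle\varphi,\Lambda^{+}_{\lambda}\varphi\rangle_{\X^{s}_{\sharp},{\X^{s}_{\sharp}}^{*}}\neq 0$ for $\varphi\neq 0$. Writing $\phi:=\Lambda^{+}_{\lambda}\varphi$, so that $\varphi=M^{+}_{\lambda}\phi$, this is equivalent, up to sign, to $\text{\rm Im}\langle\phi,M^{+}_{\lambda}\phi\rangle_{{\X^{s}_{\sharp}}^{*},\X^{s}_{\sharp}}\neq 0$. Taking $w=z^{*}$ in \eqref{M1-M2} gives $\text{\rm Im}\langle\phi,M_{z}\phi\rangle=\text{\rm Im}(z)\,\|G_{z}\phi\|^{2}_{L^{2}(\RE^{3})}$; since $G^{*}_{z}G_{z}=\tau R^{0}_{z^{*}}R^{0}_{z}\tau^{*}$, the first resolvent identity together with the limiting absorption principle yield, letting $\epsilon\downarrow 0$ with $z=\lambda+i\epsilon$, the identity $\text{\rm Im}\langle\phi,M^{+}_{\lambda}\phi\rangle=-\,\text{\rm Im}\langle\tau^{*}\phi,R^{0,+}_{\lambda}\tau^{*}\phi\rangle_{L^{2}(\RE^{3})}$. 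By the spectral representation of $-\Delta$ the right-hand side equals a strictly positive constant times $\int_{{\mathbb S}^{2}}|\widehat{\tau^{*}\phi}(|\lambda|^{1/2}\xi)|^{2}\,d\sigma(\xi)$, and, recalling the definition \eqref{Ll} of $L^{\sharp}_{\lambda}$, this integral is in turn a strictly positive constant times $\|L^{\sharp}_{\lambda}\phi\|^{2}_{L^{2}({\mathbb S}^{2})}$ (an optical-theorem type identity; it can also be read off the unitarity of $S^{\Lambda}_{\lambda}$). Finally, $L^{\sharp}_{\lambda}$ is injective on ${\X^{s}_{\sharp}}^{*}$: as in the proof of Lemma \ref{LL}, $L^{D}_{\lambda}=c\,K^{D}_{\lambda}\gamma_{0}\SL^{+}_{\lambda}$, $L^{N}_{\lambda}=c\,K^{N}_{\lambda}\gamma_{1}\DL^{+}_{\lambda}$ (and the analogous identity for $\sharp=DN$ via \eqref{L-DN}) are compositions of the bijections $\gamma_{0}\SL^{+}_{\lambda}$, $\gamma_{1}\DL^{+}_{\lambda}$ with the data-to-pattern operators $K^{\sharp}_{\lambda}$, which are injective by Rellich's lemma and unique continuation. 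Hence $\text{\rm Im}\langle\phi,M^{+}_{\lambda}\phi\rangle\neq 0$ (in fact it is definite in sign) for $\phi\neq 0$, which gives the claim.

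With the decomposition $C=C_{\circ}+\widetilde K$ and the nonvanishing of $\text{\rm Im}\langle\varphi,C\varphi\rangle_{\Y^{*},\Y}$ at hand, Lemma \ref{crit}, applied to $C$ (or to $-C$ if $C_{\circ}$ is negative), shows that $C$ is coercive. Therefore both Theorem \ref{inf} and Theorem \ref{sum} apply to the factorization $F^{\Lambda}_{\lambda}=BCB^{*}$: the former gives the equivalence of $x\in\Omega$ with the positivity of the infimum in the statement, while the latter — using that $F^{\Lambda}_{\lambda}$ is a normal compact operator (Theorem \ref{comp-norm}, valid for $\lambda\in E^{-}_{\Lambda}$) with spectral data $\{z^{\Lambda}_{\lambda,k}\}$, $\{\psi^{\Lambda}_{\lambda,k}\}$ as in Remark \ref{rem-cn} — gives the equivalence of $x\in\Omega$ with the convergence of the series in the statement. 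Concatenating the two equivalences yields Theorem \ref{cor}.
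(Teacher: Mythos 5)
Your proposal is correct, and it shares the paper's overall skeleton (reduce to Theorems \ref{inf} and \ref{sum} via Lemma \ref{crit}), but it diverges from the paper's proof in two substantive ways. First, the factorization: you take $C=\Lambda^{+}_{\lambda}=(M^{+}_{\lambda})^{-1}$ and $B=L^{\sharp}_{\lambda}$, which forces you to transfer the sign-definite-plus-compact decomposition to the inverse via $(M^{+}_{\circ}+K^{+}_{\lambda})^{-1}=(M^{+}_{\circ})^{-1}-(M^{+}_{\circ})^{-1}K^{+}_{\lambda}(M^{+}_{\lambda})^{-1}$; the paper instead factors $F^{\Lambda}_{\lambda}=\big(L^{\sharp}_{\lambda}(M^{+}_{\lambda})^{-1}\big)(M^{+}_{\lambda})^{*}\big(L^{\sharp}_{\lambda}(M^{+}_{\lambda})^{-1}\big)^{*}$, so the middle operator is $(M^{+}_{\lambda})^{*}$ itself and inherits the decomposition with no extra work. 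Both routes are sound. Second, and more significantly, the non-vanishing of the imaginary part: you prove $\text{\rm Im}\langle\phi,M^{+}_{\lambda}\phi\rangle_{{\X^{s}_{\sharp}}^{*}\!,\X^{s}_{\sharp}}=\pi\|L^{\sharp}_{\lambda}\phi\|^{2}_{L^{2}({\mathbb S}^{2})}$ by passing to the limit in \eqref{M1-M2} through the Limiting Absorption Principle (an optical-theorem identity) and then invoke injectivity of $L^{\sharp}_{\lambda}$; the paper derives the analogous identity \eqref{identity} from the unitarity of $S^{\Lambda}_{\lambda}$, but only on the dense range of $B^{*}_{\lambda}$, and then removes the density restriction by an approximation argument. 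The two arguments rest on the same ultimate input (injectivity of $K^{\sharp}_{\lambda}$, resp. of $G^{+}_{\lambda}$ in the $DN$ case, via Rellich's lemma and unique continuation), but yours buys a cleaner, quantitative identity valid for all $\phi$ at once, at the price of using the off-axis family $M_{z}$, the relation \eqref{M1-M2} and the convergence $M_{\lambda+i\epsilon}\to M^{+}_{\lambda}$ — data which are available in the paper's framework (Remarks \ref{MM} and \ref{lim-inv}) and in every application, but which the paper's own proof deliberately avoids by working only with the boundary values and the unitarity of the scattering matrix. If you adopt your route in a write-up, you should state explicitly that you are invoking \eqref{M1-M2} for $\text{\rm Im}(z)>0$ and the LAP limit, since these are not literally among the hypotheses of the theorem as formulated; and for $\sharp=DN$ the injectivity of $L^{DN}_{\lambda}$ deserves the paper's more careful treatment (injectivity of $G^{+}_{\lambda}$ via the uniform lower bound on $G_{\lambda+i\epsilon}$), rather than a one-line appeal to the analogue of \eqref{LLL}.
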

\begin{proof} Let us consider the factorization $F^{\Lambda}_{\lambda}=\big(L^{\sharp}_{\lambda}(M^{+}_\lambda)^{-1}\big)(M^{+}_{\lambda})^{*}\big(L^{\sharp}_{\lambda}(M^{+}_\lambda)^{-1}\big)^{*}$. Then the thesis is consequence of Lemma \ref{crit}, Theorems \ref{inf} and \ref{sum} once one shows that 
$$\text{ $\text{\rm Im}\langle\phi,(M^{+}_{\lambda})^{*}\phi\rangle_{{\X^{s}_{\sharp}}^{*}\!,{\X^{s}_{\sharp}}}
\not=0$ for any $\phi\in{\X^{s}_{\sharp}}^{*}\backslash\{0\}$.}
$$ Equivalently, let us prove that  $\text{\rm Im}\langle\phi,(M^{+}_{\lambda})^{*}\phi\rangle_{{\X^{s}_{\sharp}}^{*}\!,{\X^{s}_{\sharp}}}
=0$ implies $\phi=0$ (our reasonings below are inspired by the ones given in \cite[page 51]{KG}). 
By the definition of $F^{\Lambda}_{\lambda}$ and since $S^{\Lambda}_{\lambda}$ is unitary, one gets
$$
F^{\Lambda}_{\lambda}-(F^{\Lambda}_{\lambda})^{*}=-2\pi i\,(F^{\Lambda}_{\lambda})^{*}F^{\Lambda}_{\lambda}\,.
$$
Setting $B_{\lambda}:=L^{\sharp}_{\lambda}(M^{+}_\lambda)^{-1}$, this gives the identity
\be\label{identity}
\begin{split}
&\text{\rm Im}\langle B_{\lambda}^{*}\psi,(M^{+}_{\lambda})^{*}B^{*}_{\lambda}\psi\rangle
_{{\X^{s}_{\sharp}}^{*}\!,{\X^{s}_{\sharp}}}=\text{\rm Im}\langle \psi,B_{\lambda} (M^{+}_{\lambda})^{*}B^{*}_{\lambda}\psi\rangle_{ L^{2}({\mathbb S}^{2})}\\=&\frac1{2i}\,\langle \psi,({F^{\Lambda}_{\lambda}}-(F^{\Lambda}_{\lambda})^{*})\psi\rangle_{ L^{2}({\mathbb S}^{2})} 
=-\pi\,\|F^{\Lambda}_{\lambda}\psi\|^{2}_{L^{2}({\mathbb S}^{2})}\,.
\end{split}
\ee
Let $\sharp=N,D$; then by \eqref{LLL}, $\ker(B_{\lambda})=\ker(K^{\sharp}_{\lambda})$; hence, by \cite[Lemma 1.13 and Theorem 1.26(b)]{KG}, one has $\ker(B_{\lambda})=\{0\}$ and so $\ran(B^{*}_{\lambda})$ is dense. Let $\sharp=DN$; then, by \eqref{L-DN}, $\ker(B_{\lambda})=\ker(L^{ND})=\ker(K^{D}_{\lambda}\gamma_{0}G^{+}_{\lambda})$, where $G^{+}_{\lambda}(\phi\oplus\varphi):=(\SL^{+}_{\lambda}\phi+\DL^{+}_{\lambda}\varphi)$. Since $G^{+}_{\lambda}(\phi\oplus\varphi)$ is a radiating solution of the Helmholtz equation in $\Omega_{\+}$, $\gamma_{0}G^{+}_{\lambda}(\phi\oplus\varphi)=0$ implies $G^{+}_{\lambda}(\phi\oplus\varphi)=0$. Hence $\ker(B_{\lambda})=\ker(G^{+}_{\lambda})$. Since $G_{\lambda+ i\epsilon}$ converges to $G_{\lambda}^{+}$ in $\B(B^{-3/2}_{2,2}(\Gamma)\oplus H^{1/2}(\Gamma),L^{2}_{w}(\RE^{3}))$ (see \eqref{fLAP}) and 
there exists $c>0$ such that, for any $\epsilon>0$, $$\|G_{\lambda+i\epsilon}(\phi\oplus\varphi)\|_{L^{2}_{w}(\RE^{3})}
\ge c\,\|\phi\oplus\varphi\|_{B^{-3/2}_{2,2}(\Gamma)\oplus H^{1/2}(\Gamma)}
$$
(see \cite[proof of Lemma 3.6]{JMPA}), $G^{+}_{\lambda}$ is injective and so $\ran(B_{\lambda})$ is dense whenever $\sharp=DN$ as well.\par
Let $\phi\in {\X^{s}_{\sharp}}^{*}$ be such that $\text{\rm Im}\langle\phi,(M^{+}_{\lambda})^{*}\phi\rangle
_{{\X^{s}_{\sharp}}^{*}\!,{\X^{s}_{\sharp}}}=0$; let $\{\psi_{n}\}_{1}^{\infty}\subset L^{2}({\mathbb S}^{2})$  be a sequence such that $B^{*}_{\lambda}\psi_{n}\to \phi$. Then, by \eqref{identity}, $F^{\Lambda}_{\lambda}\psi_{n}\to 0$ and so, for any $\psi\in L^{2}({\mathbb S}^{2})$, 
$$
\langle B_{\lambda}^{*}\psi,(M^{+}_{\lambda})^{*}B^{*}_{\lambda}\psi_{n}\rangle
_{{\X^{s}_{\sharp}}^{*}\!,{\X^{s}_{\sharp}}}=\langle\psi, F^{\Lambda}_{\lambda}\psi_{n}\rangle_{L^{2}({\mathbb S}^{2})}
\to \langle B_{\lambda}^{*}\psi,(M^{+}_{\lambda})^{*}\phi\rangle
_{{\X^{s}_{\sharp}}^{*}\!,{\X^{s}_{\sharp}}}=0\,.$$
Therefore $(M^{+}_{\lambda})^{*}\phi\in \ran(B_{\lambda}^{*})^{\perp}=\{0\}$. Since $M^{+}_{\lambda}$ is a bijection, $(M^{+}_{\lambda})^{*}$ is injective and so $\phi=0$.\end{proof}
\begin{remark} If $M^{+}_{\lambda}$ in Theorem \ref{cor}  is merely coercive, then the ``inf'' criterion still holds.
\end{remark}
\subsection{Applications}\label{appli}
\subsubsection{Dirichlet obstacles.}\label{Dir-ob} Let $\Delta^{D}_{\Omega_{\-/\+}}$ denote the self-adjoint operators in $L^{2}(\Omega_{\-/\+})$ corresponding to the Laplace operator with Dirichlet boundary conditions. One has $\Delta^{D}_{\Omega_{\-}}\oplus \Delta^{D}_{\Omega_{\+}}= \Delta_{\Lambda^{D}}$, where $\Lambda^{D}_{z}=-(\gamma_{0}\SL_{z})^{-1}\in\B(H^{1/2}(\Gamma),H^{-1/2}(\Gamma))$, $z\in\CO\backslash(-\infty,0]$, and Theorem \ref{LH} holds in this case (see \cite[Section 5.2]{JMPA}). 
By first resolvent identity,  $ \gamma_{0}\SL_{\lambda}^{+}:H^{-1/2}(\Gamma)\to 
H^{1/2}(\Gamma)$ can be additively decomposed as $\gamma_{0}\SL_{\lambda}^{+}=
\gamma_{0}\SL_{\mu}+(\lambda-\mu)\gamma_{0}R^{0,+}_{\lambda}\SL_{\mu}$, $\mu>0$, 
with $\gamma_{0}\SL_{\mu}$ positive (see \cite[Lemma 3.2]{JDE}) and $\gamma_{0}R^{0,+}_{\lambda}\SL_{\mu}$ compact (see \cite[Section 5.1.3]{JMPA}).
Thus Theorem \ref{cor}  applies to $F^{\Lambda^{D}}_{\lambda}$, $\lambda\in E^{-}_{D}$. 
\subsubsection{Neumann obstacles.}\label{Neu-ob} Let $\Delta^{N}_{\Omega_{\-/\+}}$ denote the self-adjoint operators in $L^{2}(\Omega_{\-/\+})$ corresponding to the Laplace operator with Neumann boundary conditions. One has $\Delta^{N}_{\Omega_{\-}}\oplus \Delta^{N}_{\Omega_{\+}}= \Delta_{\Lambda^{N}}$, where $\Lambda^{N}_{z}=-(\gamma_{1}\DL_{z})^{-1}\in\B(H^{-1/2}(\Gamma),H^{1/2}(\Gamma))$, $z\in\CO\backslash(-\infty,0]$, and Theorem \ref{LH} holds in this case (see \cite[Section 5.3]{JMPA}). 
By first resolvent identity,  $ \gamma_{1}\DL_{\lambda}^{+}:H^{1/2}(\Gamma)\to 
H^{-1/2}(\Gamma)$ can be additively decomposed as $\gamma_{1}\DL_{\lambda}^{+}=
\gamma_{1}\DL_{\mu}+(\lambda-\mu)\gamma_{1}R^{0,+}_{\lambda}\DL_{\mu}$, $\mu>0$, 
with $-\gamma_{1}\DL_{\mu}$ positive (see \cite[Lemma 3.2]{JDE}) and $\gamma_{1}R^{0,+}_{\lambda}\DL_{\mu}$ compact (see \cite[Section 5.1.3]{JMPA}).
Thus Theorem \ref{cor}  applies to $F^{\Lambda^{N}}_{\lambda}$, $\lambda\in E^{-}_{N}$.
\subsubsection{Obstacles with semitransparent boundary conditions $\alpha\gamma_{0}u=[\gamma_{1}]u$.}\label{delta-ob} Here $\alpha$ is a real-valued function and we use the same symbol to denote the corresponding multiplication operator.
\begin{lemma}\label{delta} 1) If $\alpha\in L^{6}(\Gamma)$ and $\frac1\alpha\in L^{\infty}(\Gamma)$ then $(\frac{\uno}\alpha+ \gamma_{0}\SL_{z})^{-1}\in\B(L^2(\Gamma))$, $z\in\CO\backslash\RE$. 2) If both $\alpha$ and $\frac1{\alpha}$ belong to 
$L^{\infty}(\Gamma)$ and $\sgn(\alpha)$ is constant, then $\frac{\uno}\alpha+ \gamma_{0}\SL_{\lambda}^{+}$, $\lambda\in(-\infty,0)$, is the sum of a sign-definite operator plus a compact one. 
\end{lemma}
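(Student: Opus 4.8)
The plan is to treat the two assertions separately, in both cases reducing the claim to properties of the operator $\gamma_{0}\SL_{z}$ (resp.\ its boundary limit $\gamma_{0}\SL_{\lambda}^{+}$) that are already available in the excerpt, together with mapping properties of multiplication by $\alpha$ and $1/\alpha$ viewed as Sobolev multipliers. For part 1), since $z\in\CO\backslash\RE$, one can use Lemma \ref{Im-not-zero} applied to the relevant $M_{z}$ — here $M_{z}=\frac{\uno}{\alpha}+\gamma_{0}\SL_{z}$, which satisfies \eqref{M1-M2} for the choice $\tau=\gamma_{0}$, $G_{z}=\SL_{z^{*}}^{*}$ (the defining identities \eqref{Swz} give exactly the required relation for $\gamma_{0}\SL_{z}$, and $\frac{\uno}{\alpha}$ is $z$-independent and self-adjoint). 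Thus $\operatorname{Im}\langle\phi,M_{z}\phi\rangle\neq0$ for all $\phi\neq0$ when $\operatorname{Im}z\neq0$, so $M_{z}$ is injective with injective adjoint. To upgrade injectivity to bounded invertibility on $L^{2}(\Gamma)$ I would first check $M_{z}\in\B(L^{2}(\Gamma))$: $\gamma_{0}\SL_{z}\in\B(H^{-1/2}(\Gamma),H^{1/2}(\Gamma))$ restricts to a \emph{compact} operator on $L^{2}(\Gamma)$ by the compact embeddings $H^{1/2}(\Gamma)\hookrightarrow L^{2}(\Gamma)\hookrightarrow H^{-1/2}(\Gamma)$ recalled after \eqref{0trace}; multiplication by $1/\alpha\in L^{\infty}(\Gamma)$ is bounded on $L^{2}(\Gamma)$, and the hypothesis $\alpha\in L^{6}(\Gamma)$ guarantees (via the compact embedding $H^{1/2}(\Gamma)\hookrightarrow L^{2/(1-1/2)}(\Gamma)=L^{4}(\Gamma)$ and Hölder with exponents $6,4,12$, or more directly the Sobolev-multiplier property $\alpha\in\M(H^{1/2}(\Gamma),L^{2}(\Gamma))$) that $\alpha\,\gamma_{0}\SL_{z}$ is compact on $L^{2}(\Gamma)$ as well — this is the ingredient that lets one pass between $\frac{\uno}{\alpha}+\gamma_{0}\SL_{z}$ and $\uno+\alpha\,\gamma_{0}\SL_{z}$. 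Then $M_{z}$ is Fredholm of index zero on $L^{2}(\Gamma)$ (identity plus compact, after multiplying by the bounded invertible operator $\alpha$), hence injectivity forces bijectivity and $M_{z}^{-1}\in\B(L^{2}(\Gamma))$ by the open mapping theorem.

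For part 2), the strategy is to exhibit the claimed decomposition $\frac{\uno}{\alpha}+\gamma_{0}\SL_{\lambda}^{+}=M_{\circ}+K_{\lambda}$ explicitly, using the resolvent splitting already performed for Dirichlet obstacles in Subsection \ref{Dir-ob}. Fix an auxiliary $\mu>0$; by \eqref{Rlim} one has $\gamma_{0}\SL_{\lambda}^{+}=\gamma_{0}\SL_{\mu}+(\mu-\lambda)\gamma_{0}R^{0,+}_{\lambda}\SL_{\mu}$, where $\gamma_{0}\SL_{\mu}$ is a \emph{positive} operator on $L^{2}(\Gamma)$ (this is \cite[Lemma 3.2]{JDE}, as cited in Subsection \ref{Dir-ob}) and the second summand is compact on $L^{2}(\Gamma)$ (as in \cite[Section 5.1.3]{JMPA}). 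Hence I set
$$
M_{\circ}:=\frac{\uno}{\alpha}+\sgn(\alpha)\,\gamma_{0}\SL_{\mu}\cdot(\text{after absorbing signs})\,,\qquad K_{\lambda}:=(\mu-\lambda)\,\gamma_{0}R^{0,+}_{\lambda}\SL_{\mu}\,.
$$
Since $\sgn(\alpha)$ is a constant $\pm1$, multiplication by $\frac{1}{\alpha}$ on $L^{2}(\Gamma)$ has the same sign as that constant and is bounded below in absolute value by $\|\alpha\|_{L^{\infty}(\Gamma)}^{-1}>0$ because $\alpha\in L^{\infty}(\Gamma)$; thus $\sgn(\alpha)\,\frac{\uno}{\alpha}\ge \|\alpha\|_{L^{\infty}}^{-1}\,\uno>0$. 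Adding the positive operator $\gamma_{0}\SL_{\mu}$ (with the matching sign) keeps the expression sign-definite: if $\sgn(\alpha)=+1$ then $\frac{\uno}{\alpha}+\gamma_{0}\SL_{\mu}$ is positive, and if $\sgn(\alpha)=-1$ then its negative is positive. In either case $M_{\circ}=\frac{\uno}{\alpha}+\gamma_{0}\SL_{\mu}$ is sign-definite on $L^{2}(\Gamma)$, while $K_{\lambda}$ is compact, which is exactly the asserted structure.

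The main obstacle I anticipate is the bookkeeping around function spaces in part 1): one must be careful that $\gamma_{0}\SL_{z}$, a priori defined as a map $H^{-1/2}(\Gamma)\to H^{1/2}(\Gamma)$ (or, via \eqref{SL}, on the Besov scale), genuinely restricts to a \emph{compact} endomorphism of $L^{2}(\Gamma)$, and that the product $\alpha\cdot(\gamma_{0}\SL_{z})$ remains compact on $L^{2}(\Gamma)$ under the sole hypothesis $\alpha\in L^{6}(\Gamma)$ — this is where the compact embedding $B^{1/2}_{2,2}(\Gamma)\hookrightarrow L^{4}(\Gamma)$ quoted after \eqref{0trace} and Hölder's inequality are essential, since $\alpha\in L^{6}$ and $u\in L^{4}$ give $\alpha u\in L^{12/5}\subset L^{2}$ with a gain of compactness. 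A secondary point requiring care is verifying that $\frac{\uno}{\alpha}+\gamma_{0}\SL_{z}$ really does fit the abstract framework of Lemma \ref{Im-not-zero} — i.e.\ that the relevant $G_{z}$ has the surjectivity/closed-range property onto the auxiliary space $\K$ — but this is precisely the Dirichlet-case verification already invoked in Subsection \ref{Dir-ob} via \cite[Section 5.2]{JMPA}, so it can be cited rather than redone. Everything else (Fredholm alternative, sign-definiteness bookkeeping, extraction of the compact remainder from \eqref{Rlim}) is routine once these two points are settled.
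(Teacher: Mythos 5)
Your argument for part 1) breaks down at the Fredholm step. Under the hypotheses of part 1) only $1/\alpha$ is bounded, not $\alpha$: multiplication by $\alpha\in L^{6}(\Gamma)\backslash L^{\infty}(\Gamma)$ is \emph{not} a bounded operator on $L^{2}(\Gamma)$, so you cannot ``multiply by the bounded invertible operator $\alpha$'' to reduce $\frac{\uno}{\alpha}+\gamma_{0}\SL_{z}$ to identity plus compact. Equivalently, multiplication by $1/\alpha$ is bounded and injective on $L^{2}(\Gamma)$ but, when $\alpha$ is essentially unbounded, its range is dense and not closed (were it closed, it would be all of $L^{2}(\Gamma)$ and the inverse mapping theorem would force $\alpha\in L^{\infty}(\Gamma)$); hence $\frac{\uno}{\alpha}$ is not Fredholm and ``non-Fredholm plus compact'' is not Fredholm, so the Fredholm alternative cannot be invoked. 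Your injectivity step via Lemma \ref{Im-not-zero} is legitimate (and different from the paper's), but injectivity does not upgrade to bounded invertibility along your route. The paper instead works with $\uno+\alpha\gamma_{0}\SL_{z}$ directly on $L^{2}(\Gamma)$: injectivity is inherited from its invertibility in $H^{-1/3}(\Gamma)$ (\cite[Lemma 5.8]{JMPA}), and surjectivity follows from a regularity bootstrap using $L^{6}(\Gamma)\subseteq\M(H^{2/3}(\Gamma),L^{2}(\Gamma))$ (if $\psi=\phi+\alpha\gamma_{0}\SL_{z}\phi\in L^{2}(\Gamma)$ with $\phi\in H^{-1/3}(\Gamma)$, then $\alpha\gamma_{0}\SL_{z}\phi\in L^{2}(\Gamma)$, whence $\phi\in L^{2}(\Gamma)$); only then is the factorization $\frac{\uno}{\alpha}+\gamma_{0}\SL_{z}=\frac{1}{\alpha}(\uno+\alpha\gamma_{0}\SL_{z})$ used.

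Part 2) of your proposal fails when $\sgn(\alpha)=-1$: there $\frac{1}{\alpha}\le-\|\alpha\|_{L^{\infty}(\Gamma)}^{-1}<0$ while $\gamma_{0}\SL_{\mu}$ is nonnegative, so $M_{\circ}=\frac{\uno}{\alpha}+\gamma_{0}\SL_{\mu}$ is the sum of a negative and a nonnegative operator and is in general indefinite; the signs do not ``match'' as you assert. (One could try to rescue this by choosing $\mu$ so large that $\|\gamma_{0}\SL_{\mu}\|_{\B(L^{2}(\Gamma))}<\|\alpha\|_{L^{\infty}(\Gamma)}^{-1}$, but that decay is an extra claim you would need to prove.) The paper's decomposition is simpler and covers both signs: take $M_{\circ}=\frac{\uno}{\alpha}$, which is sign-definite because $\sgn(\alpha)$ is constant and $\langle\varphi,\frac{1}{|\alpha|}\varphi\rangle_{L^{2}(\Gamma)}\ge\|\alpha\|_{L^{\infty}(\Gamma)}^{-1}\|\varphi\|_{L^{2}(\Gamma)}^{2}$, and put the \emph{whole} of $\gamma_{0}\SL_{\lambda}^{+}$ into the compact part: it is compact on $L^{2}(\Gamma)$ since it maps $L^{2}(\Gamma)$ into $H^{1}(\Gamma)$, which embeds compactly into $L^{2}(\Gamma)$. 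No splitting via \eqref{Rlim} is needed for this part.
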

\begin{proof} Since $L^{6}(\Gamma)\subseteq\M(H^{2/3}(\Gamma), L^{2}(\Gamma))$  and $ \gamma_{0}\SL_{z}\in\B(H^{t-1/2}(\Gamma),H^{t+1/2}(\Gamma))$, $0< t\le 1/2$ (see \cite[equation (5.27)]{JMPA}, one has that   $\uno+\alpha \gamma_{0}\SL_{z}\in\B(L^2(\Gamma))$ and it is injective since  it is invertible (and hence injective) as a map in $H^{-1/3}(\Gamma)$ (use \cite[Lemma 5.8]{JMPA}). Let us now suppose that it is not surjective from $L^{2}(\Gamma)$ onto itself, i.e. we suppose that there exists $\psi\in L^{2}(\Gamma)$ such that $\psi=\phi+\alpha \gamma_{0}\SL_{z}\phi$ with $\phi\in H^{-1/3}(\Gamma)$, and $\phi\notin L^{2}(\Gamma)$. 
Hence $\alpha \gamma_{0}\SL_{z}\phi\notin L^{2}(\Gamma)$, which is not possible since $\SL_{z}\phi\in H^{2/3}(\Gamma)$ and $\alpha\in\M(H^{2/3}(\Gamma), L^{2}(\Gamma))$. In conclusion  $\uno+\alpha \gamma_{0}\SL_{z}\in\B(L^2(\Gamma))$ is a bounded bijection in $L^{2}(\Gamma)$ and so  $(\uno+\alpha \gamma_{0}\SL_{z})^{-1}\in\B(L^2(\Gamma))$ by the inverse mapping theorem. Since $\alpha$ is a.e. finite, $\frac\uno\alpha:L^{2}(\Gamma)\to L^{2}(\Gamma)$ is a continuous bijection. Hence  $\frac{\uno}\alpha+ \gamma_{0}\SL_{z}=\frac1\alpha(\uno+\alpha \gamma_{0}\SL_{z})$ is a continuous bijection and so  $(\frac\uno\alpha+ \gamma_{0}\SL_{z})^{-1}\in\B(L^2(\Gamma))$ by the inverse mapping theorem.\par
Since $ \gamma_{0}\SL_{\lambda}^{+}$ maps $L^{2}(\Gamma)$ onto $H^{1}(\Gamma)$, by the compact embedding $H^{1}(\Gamma)\hookrightarrow L^{2}(\Gamma)$, it is compact. Since 
$\langle\varphi,\frac1{|\alpha|}\,\varphi\rangle_{L^{2}(\Gamma)}\ge \|\alpha\|_{L^{\infty}(\Gamma)}^{-1}{\|\varphi\|^{2}_{L^{2}(\Gamma)}}$ and $\sgn(\alpha)$ is constant, $\frac1\alpha$ is sign-definite. 
\end{proof}
We consider the self-adjoint operator $\Delta_{\Lambda^{\alpha}}$, where 
\be\label{Malpha}
\Lambda_{z}^{\alpha}=(M^{\alpha}_{z})^{-1}\,, \quad z\in\CO\backslash\RE\,,\qquad
M^{\alpha}_{z}:=-\left(\frac{\uno}\alpha+ \gamma_{0}\SL_{z}\right)\in\B(L^2(\Gamma))\,.
\ee
$\Lambda_{z}^{\alpha}$ is well-defined, i.e., $M^{\alpha}_{z}$ has a bounded inverse, by Lemma \ref{delta}. By \cite[Theorem 2.19]{CFP}, the map $z\mapsto \Lambda^{\alpha}_{z}$ and the resolvent formula 
\eqref{resolvent} extend to  $Z_{\Lambda^{\alpha}}:=\rho(\Delta_{\Lambda^{\alpha}})\cap \CO\backslash(-\infty,0]$. 
$\Delta_{\Lambda^{\alpha}}$ provides a self-adjoint realization of the (bounded form above) Laplacian on $\RE^{3}\backslash\Gamma$ with the  semi-transparent boundary conditions at $\Gamma$ given by  $\alpha\gamma_{0}u=[\gamma_{1}]u$, $[\gamma_{0}]u=0$; moreover Theorem \ref{LH} holds in this case (see \cite[Corollary 5.12]{JMPA}). By point 2 in Lemma \ref{delta}, Theorem \ref{cor}  applies to $F^{\Lambda^{\alpha}}_{\lambda}$, $\lambda\in E^{-}_{D}$ (here $E^{-}_{\Lambda^{\alpha}}=(-\infty,0)$ by \cite[Remark 3.8]{JST}).

\subsubsection{Obstacles with semitransparent boundary conditions $\gamma_{1}u=\theta[\gamma_{0}]u$.}\label{delta'-ob} Here $\theta$ is a real-valued function and we use the same symbol to denote the corresponding multiplication operator. 
\begin{lemma}\label{delta'} Let $\theta\in L^{p}(\Gamma)$, $p>2$. Then 1) $(\theta- \gamma_{1}\DL_{z})^{-1}\in\B(H^{-1/2}(\Gamma),H^{1/2}(\Gamma))$, $z\in\CO\backslash\RE$ ; 2) $\theta- \gamma_{1}\DL^{+}_{\lambda}$, $\lambda\in(-\infty,0)$, can be decomposed as the sum of a compact operator plus a  sign-definite one.
\end{lemma}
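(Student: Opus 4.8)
The plan is to mimic the structure of Lemma \ref{delta}, replacing the single-layer operator $\gamma_{0}\SL_{z}$ by the hypersingular operator $\gamma_{1}\DL_{z}$ and adjusting the Sobolev indices accordingly. Recall that $\gamma_{1}\DL_{z}\in\B(H^{s+1/2}(\Gamma),H^{s-1/2}(\Gamma))$ for $-1/2\le s\le 1/2$; the relevant endpoint here is $s=1/2$, which gives $\gamma_{1}\DL_{z}\in\B(H^{1}(\Gamma),L^{2}(\Gamma))$, together with the ``interior'' value $s=0$, giving $\gamma_{1}\DL_{z}\in\B(H^{1/2}(\Gamma),H^{-1/2}(\Gamma))$. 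The multiplier hypothesis $\theta\in L^{p}(\Gamma)$, $p>2$, is the analogue of $\alpha\in L^{6}(\Gamma)$: by the Sobolev multiplier theory on the $2$-dimensional manifold $\Gamma$ one has $L^{p}(\Gamma)\subseteq\M(H^{\epsilon}(\Gamma),L^{2}(\Gamma))$ for a suitable small $\epsilon=\epsilon(p)>0$ (equivalently $\theta$ maps $H^{\epsilon}(\Gamma)$ boundedly into $L^{2}(\Gamma)$), which is exactly what is needed to run the bootstrap argument.

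For part 1), first I would note that $\theta-\gamma_{1}\DL_{z}$, viewed as a map in the ``large'' space $H^{-1/2}(\Gamma)$ — or rather in the interpolation space $H^{-\epsilon'}(\Gamma)$ where the perturbation is relatively compact — is invertible by an abstract argument analogous to \cite[Lemma 5.8]{JMPA}: namely $-\gamma_{1}\DL_{z}$ is (up to the known sign) positive and coercive from $H^{1/2}(\Gamma)$ to $H^{-1/2}(\Gamma)$ for $z\in\CO\backslash\RE$, since $\langle\varphi,-\gamma_{1}\DL_{z}\varphi\rangle$ relates to the energy of the double-layer potential, and adding the lower-order multiplier $\theta$ preserves invertibility in the appropriate scale by a standard Fredholm/compactness plus injectivity argument. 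Injectivity in turn follows from Lemma \ref{Im-not-zero} applied to $M_{z}=\theta-\gamma_{1}\DL_{z}$ (which satisfies \eqref{M1-M2} with the appropriate $G_{z}$), so that $\text{Im}\langle\varphi,M_{z}\varphi\rangle\ne0$ for $\varphi\ne0$ when $\text{Im}(z)\ne0$. Then I would upgrade the regularity: if $\varphi\in H^{\epsilon'}(\Gamma)$ solves $(\theta-\gamma_{1}\DL_{z})\varphi=\psi$ with $\psi\in H^{1/2}(\Gamma)$, then $\gamma_{1}\DL_{z}\varphi\in H^{1/2}(\Gamma)$ (since $\gamma_{1}\DL_{z}$ is bounded $H^{1}(\Gamma)\to L^{2}(\Gamma)$ and we can bootstrap in half steps) and $\theta\varphi=\psi+\gamma_{1}\DL_{z}\varphi$; using that $\theta\in\M(H^{1/2}(\Gamma),\,\cdot\,)$ in the reverse direction — more precisely, arguing as in Lemma \ref{delta} that $\theta\varphi$ cannot be more regular than its components permit unless $\varphi$ itself lies in $H^{1/2}(\Gamma)$ — one forces $\varphi\in H^{1/2}(\Gamma)$. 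Hence $\theta-\gamma_{1}\DL_{z}$ is a continuous bijection from $H^{1/2}(\Gamma)$ onto $H^{-1/2}(\Gamma)$ (wait — one must be careful about domain/codomain: since $\theta$ maps $H^{1/2}(\Gamma)$ only into some negative-order space, the natural statement is invertibility as a map $H^{-1/2}(\Gamma)\to H^{1/2}(\Gamma)$ for the inverse, i.e. $(\theta-\gamma_{1}\DL_{z})^{-1}\in\B(H^{-1/2}(\Gamma),H^{1/2}(\Gamma))$ as claimed), and the inverse mapping theorem gives the boundedness of the inverse.

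For part 2), at $\lambda\in(-\infty,0)$ I would use the resolvent-identity decomposition $\gamma_{1}\DL^{+}_{\lambda}=\gamma_{1}\DL_{\mu}+(\lambda-\mu)\gamma_{1}R^{0,+}_{\lambda}\DL_{\mu}$ for a fixed $\mu>0$, exactly as in Subsection \ref{Neu-ob}: the operator $-\gamma_{1}\DL_{\mu}$ is positive (by \cite[Lemma 3.2]{JDE}) hence sign-definite, and $\gamma_{1}R^{0,+}_{\lambda}\DL_{\mu}$ is compact (by the smoothing of $R^{0,+}_{\lambda}$ together with the compact Sobolev embedding on $\Gamma$, as in \cite[Section 5.1.3]{JMPA}). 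The multiplication operator $\theta$ itself, as a map from $H^{1/2}(\Gamma)$ into $H^{-1/2}(\Gamma)$ (or the relevant spaces), factors through a compact Sobolev embedding and is therefore compact — here the hypothesis $p>2$ is exactly what guarantees $\theta\colon H^{1/2}(\Gamma)\to L^{2}(\Gamma)\hookrightarrow H^{-1/2}(\Gamma)$ with the embedding compact. Therefore $\theta-\gamma_{1}\DL^{+}_{\lambda}=(-\gamma_{1}\DL_{\mu})+\big(\theta-(\lambda-\mu)\gamma_{1}R^{0,+}_{\lambda}\DL_{\mu}\big)$ is the sum of a sign-definite operator and a compact operator, as desired.

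The main obstacle I anticipate is the bootstrap/regularity step in part 1): making precise the Sobolev-multiplier mapping properties of $\theta$ under only the integrability hypothesis $\theta\in L^{p}(\Gamma)$, $p>2$, on a Lipschitz surface, and in particular pinning down the exact fractional exponent $\epsilon(p)$ so that the argument ``$\varphi\in H^{-\epsilon}$ and $\theta\varphi\in H^{1/2}$ force $\varphi\in H^{1/2}$'' closes without a gap. This is the analogue of the delicate step in the proof of Lemma \ref{delta} where one uses $L^{6}(\Gamma)\subseteq\M(H^{2/3}(\Gamma),L^{2}(\Gamma))$ together with $\SL_{z}\phi\in H^{2/3}(\Gamma)$; here the corresponding smoothing gain of $\DL_{z}$ is one full derivative ($H^{-1/2}\to H^{1/2}$), so one should arrange the pairing $p$ versus the multiplier space accordingly, and the numerology must be checked with the embedding $H^{s}(\Gamma)\hookrightarrow L^{2/(1-s)}(\Gamma)$ for $0<s<1$ recalled in the preliminaries. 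Everything else is a routine transcription of the Neumann-obstacle and $\alpha$-interface arguments already in the paper.
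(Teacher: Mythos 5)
Your part 2) decomposition ($-\gamma_{1}\DL_{\mu}$ positive, $\gamma_{1}\DL^{+}_{\lambda}-\gamma_{1}\DL_{\mu}$ compact, $\theta$ compact) is exactly the paper's argument, but the way you justify compactness of $\theta$ contains a concrete error: the claim that $p>2$ guarantees $\theta:H^{1/2}(\Gamma)\to L^{2}(\Gamma)$ is false for $2<p<4$. On the two-dimensional surface $\Gamma$ one has $H^{1/2}(\Gamma)\hookrightarrow L^{4}(\Gamma)$, so H\"older only gives $\theta\varphi\in L^{r}(\Gamma)$ with $1/r=1/p+1/4$, and $r\ge 2$ forces $p\ge 4$. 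The correct route, and the one the paper takes, is the symmetric multiplier inclusion $L^{1/s}(\Gamma)\subseteq\M(H^{s}(\Gamma),H^{-s}(\Gamma))$ with $s=1/p<1/2$: then $\theta$ maps $H^{1/2}(\Gamma)\hookrightarrow H^{1/p}(\Gamma)$ boundedly into $H^{-1/p}(\Gamma)$, and compactness follows from the compact embedding $H^{-1/p}(\Gamma)\hookrightarrow H^{-1/2}(\Gamma)$ --- this embedding, not the factorization through $L^{2}(\Gamma)$, is where $p>2$ enters.

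For part 1) the paper simply cites \cite[Lemma 5.14]{JMPA}, so there is no line-by-line comparison to make; but the bootstrap step you single out as the ``main obstacle'' is not needed, and the fact that you anticipate it suggests you are transplanting the wrong feature of Lemma \ref{delta}. There a regularity upgrade is required because invertibility is first obtained in $H^{-1/3}(\Gamma)$ while the statement lives in $L^{2}(\Gamma)$ --- a genuine mismatch of spaces. Here there is no mismatch: $M^{\theta}_{z}=\theta-\gamma_{1}\DL_{z}$ already acts boundedly from $H^{1/2}(\Gamma)$ to $H^{-1/2}(\Gamma)$ (by the corrected multiplier statement plus the mapping properties of $\gamma_{1}\DL_{z}$), it decomposes as sign-definite plus compact for nonreal $z$ as well (same decomposition as in part 2, via \eqref{Dwz}), and $\mathrm{Im}\langle\varphi,M^{\theta}_{z}\varphi\rangle\neq0$ for $\varphi\neq0$ by Lemma \ref{Im-not-zero}; Lemma \ref{crit} then gives coercivity and Remark \ref{inv} gives $(M^{\theta}_{z})^{-1}\in\B(H^{-1/2}(\Gamma),H^{1/2}(\Gamma))$ directly. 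This is precisely the scheme the paper spells out at the start of Section \ref{appli-screens}. So the skeleton of your part 1) is right, but the bootstrap paragraph should be deleted rather than completed, and the Fredholm-plus-injectivity reasoning should be replaced by the coercivity chain above.
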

\begin{proof} Point 1 is consequence of \cite[Lemma 5.14]{JMPA}. Since  $L^{1/s}(\Gamma)\subseteq\M(H^{s}(\Gamma),H^{-s}(\Gamma))$, $s\in[0,1]$, the map $\theta:H^{1/2}(\Gamma)\to H^{-1/2}(\Gamma)$ is compact by the compact embedding $H^{-1/p}(\Gamma)\hookrightarrow H^{-1/2}(\Gamma)$. The difference $ \gamma_{1}\DL_{\lambda}^{+}- \gamma_{1}\DL_{\mu}$ is compact for any $\mu>0$ (see \cite[Section 5.1.3]{JMPA}) and $ -\gamma_{1}\DL_{\mu}$ is  positive (see \cite[Lemma 3.2]{JDE}). 
\end{proof}
We consider the self-adjoint operator 
$\Delta_{\Lambda^{{\theta}}}$, 
\be\label{Mtheta}
\Lambda_{z}^{{\theta}}=(M^{\theta}_{z})^{-1}\,,\quad z\in\CO\backslash\RE\,,\qquad M^{\theta}_{z}:={\theta} -\gamma_{1}\DL_{z}\in\B(H^{1/2}(\Gamma),H^{-1/2}(\Gamma))\,.
\ee
$\Lambda_{z}^{\theta}$ is well-defined, i.e., $M^{\theta}_{z}$ has a bounded inverse, by Lemma \ref{delta'}. By \cite[Theorem 2.19]{CFP}, the map $z\mapsto \Lambda^{\theta}_{z}$ and the resolvent formula 
\eqref{resolvent} extend to $Z_{\Lambda^{\theta}}:=\rho(\Delta_{\Lambda^{\theta}})\cap \CO\backslash(-\infty,0]$. $\Delta_{\Lambda^{{\theta}}}$ provides a self-adjoint realization of the (bounded form above) Laplacian on $\RE^{3}\backslash\Gamma$ with the semi-transparent boundary conditions at $\Gamma$ given by $\gamma_{1}u=\theta[\gamma_{0}]u$, $[\gamma_{1}]u=0$; moreover Theorem \ref{LH} holds in this case (see \cite[Section 5.5]{JMPA}).  By point 2 in Lemma \ref{delta'}, Corollary \ref{crit} applies to $
F^{\Lambda^{{\theta}}}_{\lambda}$, $\lambda\in E_{N}^{-}$ (here $E^{-}_{\Lambda^{\theta}}=(-\infty,0)$ by \cite[Remark 3.8]{JST}).

\subsubsection{Obstacles with local boundary  conditions.}\label{rob-ob}
\begin{lemma}\label{rob}
Let $b_{11}$ and $b_{22}$ real-valued, $b_{11}<0$, $b_{11}\in L^{\infty}(\Gamma)$, $b^{-1}_{11}\in L^{\infty}(\Gamma)$, $b_{22}\in L^{p}(\Gamma)$, $p>2$, $b_{12}\in\C^{\kappa}(\Gamma)$ for some $\kappa\in(0,1)$. Then
$$
M^{b}_{z}:L^{2}(\Gamma)\oplus H^{1/2}(\Gamma)\to L^{2}(\Gamma)\oplus H^{-1/2}(\Gamma)\,,\qquad z\in\CO\backslash(-\infty,0]\,,
$$
$$
M^{b}_{z}:=\left[\begin{matrix} b_{11}+ \gamma_{0}\SL_{z}&b_{12}+ \gamma_{0}\DL_{z}\\
b_{12}^{*}+ \gamma_{1}\SL_{z}&b_{22}+ \gamma_{1}\DL_{z}\end{matrix}\right]
$$
is coercive for any $z\in\CO\backslash\RE$.
\end{lemma}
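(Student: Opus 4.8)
The plan is to obtain the coercivity of $M^{b}_{z}$ from the criterion of Lemma \ref{crit}, applied with $\Y=L^{2}(\Gamma)\oplus H^{-1/2}(\Gamma)$ (so that $\Y^{*}=L^{2}(\Gamma)\oplus H^{1/2}(\Gamma)$, the two spaces being in $L^{2}(\Gamma)$-duality): I would produce a decomposition $M^{b}_{z}=C_{\circ}+K_{z}$ with $C_{\circ}=C_{\circ}^{*}$ sign-definite, hence coercive, and $K_{z}$ compact, and then check separately that $\text{Im}\langle u,M^{b}_{z}u\rangle_{\Y^{*}\!,\Y}\neq 0$ for all $u\in\Y^{*}\backslash\{0\}$. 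For the principal part, fix any $\mu>0$ and let $C_{\circ}$ be the block-diagonal operator with entries $b_{11}$ and $\gamma_{1}\DL_{\mu}$, regarded as a map $L^{2}(\Gamma)\oplus H^{1/2}(\Gamma)\to L^{2}(\Gamma)\oplus H^{-1/2}(\Gamma)$. Since $b_{11}$ is real-valued with $b_{11}\le-\|b_{11}^{-1}\|_{L^{\infty}(\Gamma)}^{-1}<0$, multiplication by $b_{11}$ is self-adjoint and negative-definite on $L^{2}(\Gamma)$; and since $\mu>0$, $\gamma_{1}\DL_{\mu}=\gamma_{1}R^{0}_{\mu}\gamma_{1}^{*}$ is self-adjoint for the $L^{2}(\Gamma)$-pairing of $H^{-1/2}(\Gamma)$ with $H^{1/2}(\Gamma)$, and $-\gamma_{1}\DL_{\mu}$ is positive by \cite[Lemma 3.2]{JDE}. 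Hence $-C_{\circ}$ is positive, i.e.\ $C_{\circ}=C_{\circ}^{*}$ is sign-definite, in particular coercive.

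It then remains to show that $K_{z}:=M^{b}_{z}-C_{\circ}$, whose diagonal entries are $\gamma_{0}\SL_{z}$ and $b_{22}+\gamma_{1}(\DL_{z}-\DL_{\mu})$ and whose off-diagonal entries are $b_{12}+\gamma_{0}\DL_{z}$ and $b_{12}^{*}+\gamma_{1}\SL_{z}$, is compact from $L^{2}(\Gamma)\oplus H^{1/2}(\Gamma)$ into $L^{2}(\Gamma)\oplus H^{-1/2}(\Gamma)$; this reduces to the compactness of its four entries between the relevant component spaces. The boundary integral operators $\gamma_{0}\SL_{z}$, $\gamma_{0}\DL_{z}$ and $\gamma_{1}\SL_{z}$ are bounded, respectively, $L^{2}(\Gamma)\to H^{1}(\Gamma)$ (smoothing of the single layer, Section 2), $H^{1/2}(\Gamma)\to H^{1/2}(\Gamma)$ and $H^{-1/2}(\Gamma)\to H^{-1/2}(\Gamma)$ (see \cite[Theorem 3]{costa}, \cite[Theorem 6.12]{McL}); composing with the compact Sobolev embeddings $H^{1}(\Gamma)\hookrightarrow L^{2}(\Gamma)\hookrightarrow H^{-1/2}(\Gamma)$ and $H^{1/2}(\Gamma)\hookrightarrow L^{2}(\Gamma)$ (cf.\ Section 2), each becomes compact into the required target. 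Next, by \eqref{Dwz} one has $\gamma_{1}(\DL_{z}-\DL_{\mu})=(z-\mu)\gamma_{1}R^{0}_{z}\DL_{\mu}$, which is compact $H^{1/2}(\Gamma)\to H^{-1/2}(\Gamma)$ by the smoothing of $R^{0}_{z}$ (cf.\ \cite[Section 5.1.3]{JMPA}). Finally, $b_{12}$ and $b_{12}^{*}$ act boundedly on $L^{2}(\Gamma)$, so $b_{12}: H^{1/2}(\Gamma)\hookrightarrow L^{2}(\Gamma)\to L^{2}(\Gamma)$ and $b_{12}^{*}: L^{2}(\Gamma)\to L^{2}(\Gamma)\hookrightarrow H^{-1/2}(\Gamma)$ factor through a compact embedding, hence are compact; and $b_{22}\in L^{p}(\Gamma)\subseteq\M(H^{1/p}(\Gamma),H^{-1/p}(\Gamma))$ with $p>2$, so $b_{22}: H^{1/2}(\Gamma)\hookrightarrow H^{1/p}(\Gamma)\to H^{-1/p}(\Gamma)\hookrightarrow H^{-1/2}(\Gamma)$ is compact. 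Thus $K_{z}$ is compact (which, incidentally, re-proves the boundedness of $M^{b}_{z}$ asserted in the statement).

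For the imaginary part, I would argue as in the proof of Lemma \ref{Im-not-zero}: the constant term of $M^{b}_{z}$ (the block operator assembled from $b_{11}$, $b_{12}$, $b_{12}^{*}$, $b_{22}$) is self-adjoint, hence it does not contribute and $\text{Im}\langle u,M^{b}_{z}u\rangle_{\Y^{*}\!,\Y}=\text{Im}\langle u,\tau R^{0}_{z}\tau^{*}u\rangle_{\Y^{*}\!,\Y}$ with $\tau=\gamma_{0}\oplus\gamma_{1}$; by the resolvent identity for $R^{0}_{z}$ this is a nonzero real multiple of $\|R^{0}_{z^{*}}\tau^{*}u\|^{2}_{L^{2}(\RE^{3})}$ whenever $\text{Im}(z)\neq 0$. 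Since $\tau$ is surjective onto $\K$, $\tau^{*}$ is injective, and so is $R^{0}_{z^{*}}\tau^{*}$ because $R^{0}_{z^{*}}$ is a bijection $H^{-2}(\RE^{3})\to L^{2}(\RE^{3})$; hence $\text{Im}\langle u,M^{b}_{z}u\rangle_{\Y^{*}\!,\Y}\neq 0$ for every $u\in\Y^{*}\backslash\{0\}$ and $z\in\CO\backslash\RE$. Combining this with the decomposition above, Lemma \ref{crit} yields that $M^{b}_{z}$ is coercive for every $z\in\CO\backslash\RE$, which is the assertion.

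The delicate point is the compactness of $K_{z}$: one must pin down the mapping properties of the off-diagonal traces $\gamma_{0}\DL_{z}$ and $\gamma_{1}\SL_{z}$, which on a merely Lipschitz $\Gamma$ gain no Sobolev regularity — they only map $H^{1/2}(\Gamma)$, respectively $H^{-1/2}(\Gamma)$, to itself — so that compactness into the target component (of order lower by $1/2$) is produced solely by the Rellich embedding; and one must deal with the Hölder multiplier $b_{12}$ and with the $L^{p}$ multiplier $b_{22}$, $p>2$, in the correct topologies. The rest is a routine combination of resolvent identities, standard mapping properties of layer potentials, and compact Sobolev embeddings on $\Gamma$.
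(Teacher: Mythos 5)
Your proof is correct and follows essentially the same route as the paper's: the same block sign-definite part (diagonal with $b_{11}$ and $\gamma_{1}\DL_{\mu}$, $\mu>0$), the same compactness arguments entry by entry for the remainder, and the same resolvent-identity argument for the non-vanishing imaginary part (this is exactly Lemma \ref{Im-not-zero}; note only that the relevant quantity is $\text{\rm Im}(z)\,\|G_{z}u\|^{2}_{L^{2}(\RE^{3})}$ with $G_{z}=(\tau R^{0}_{z^{*}})^{*}$, not $R^{0}_{z^{*}}\tau^{*}u$ --- immaterial for the injectivity conclusion), followed by Lemma \ref{crit}. The only cosmetic difference is that the paper splits your compact remainder $K_{z}$ into a diagonal piece $M_{(2)}$ and an off-diagonal piece $M_{(3)}$.
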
 
\begin{proof} Given $\mu>0$, let us consider the decomposition $M^{b}_{z}=M_{(1)}+M_{(2)}+M_{(3)}$, where
$$
M_{(1)}=\left[\begin{matrix} b_{11}&0\\
0& \gamma_{1}\DL_{\mu}\end{matrix}\right]\,,
$$
$$
M_{(2)}=
\left[\begin{matrix}  \gamma_{0}\SL_{z}&0\\
0&b_{22}+ \gamma_{1}\DL_{z}- \gamma_{1}\DL_{\mu}\end{matrix}\right]\,,
$$
$$
M_{(3)}=
\left[\begin{matrix}0 &b_{12}+ \gamma_{0}\DL_{z}\\
b_{12}^{*}+ \gamma_{1}\SL_{z}&0\end{matrix}\right]\,.
$$
By \cite[Lemma 3.2]{JDE}, $$-\langle\varphi,\gamma_{1}\DL_{\mu}\varphi\rangle_{H^{1/2}(\Gamma),H^{-1/2}(\Gamma)}\ge c_{\mu}\,\|\varphi\|^{2}_{H^{-1/2}(\Gamma)}\,,\quad c_{\mu}>0\,.$$ 
Hence, since $b_{11}<0$,
\begin{align*}
-\left(\langle\phi,b_{11}\phi\rangle_{L^2(\Gamma)}+\langle\varphi,\gamma_{1}\DL_{\mu}\varphi\rangle_{H^{1/2}(\Gamma),H^{-1/2}(\Gamma)}\right)
\ge \|b_{11}^{-1}\|^{-1}_{L^{\infty}(\Gamma)}\|\phi\|^{2}_{L^{2}(\Gamma)}+c_{\mu}\,\|\varphi\|^{2}_{H^{-1/2}(\Gamma)}
\end{align*}
and  $M_{(1)}$ is sign-definite. $M_{(2)}$ is compact
since both its diagonal elements are compact (here one argues as in the proofs of Lemmata \ref{delta} and \ref{delta'}).  Since $\C^{\kappa}(\Gamma)\subseteq\M(H^{s}(\Gamma))\subseteq L^{\infty}(\Gamma)$, $0<s<\kappa$, and $\gamma_{0}\DL_{z}\in\B(H^{1/2}(\Gamma))$, $\gamma_{1}\SL_{z}\in\B(L^{2}(\Gamma))$ (see, e.g., \cite[Theorem 6.12 and successive remarks]{McL}), one has that $M_{(3)}$ maps $L^{2}(\Gamma)\oplus H^{1/2}(\Gamma)$ into 
$H^{s}(\Gamma)\oplus L^{2}(\Gamma)$ for any $s\in[0,1/2]\cap[0,\kappa]$; hence it is compact by the compact embeddings $H^{s}(\Gamma)\hookrightarrow L^{2}(\Gamma)$, $s>0$, and $L^{2}(\Gamma)\hookrightarrow H^{-1/2}(\Gamma)$. Therefore $M^{b}_{z}$ decomposes as the sum of a sign-definite operator plus a compact one. Since, by resolvent identity, $M^{b}_{z}$ satisfies \eqref{M1-M2}, the proof is then concluded by Lemmata \ref{Im-not-zero}  and \ref{crit}.\end{proof}
By Lemma \ref{rob} and Remark \ref{inv}, the operator-valued map $z\mapsto\Lambda^{b}_{z}$, 
$\Lambda^{b}_{z}:=(M_{z}^{b})^{-1}$, $z\in \CO\backslash\RE$, is well defined and, by \eqref{M1-M2}, satisfies \eqref{Lambda}. Therefore, by Theorem \ref{WO}, we can define the self-adjoint operator $\Delta_{\Lambda^{b}}$; it provides a self-adjoint realization of the Laplacian on $\RE^{3}\backslash\Gamma$ with boundary conditions 
$$\begin{cases}
\gamma_{0}u=b_{11}[\gamma_{0}]u+b_{12}[\gamma_{1}]u\,,\quad\\
\gamma_{1}u=b^{*}_{12}[\gamma_{0}]u+b_{22}[\gamma_{1}]u
\end{cases}$$ 
(see \cite[Corollary 4.9]{JDE}). By \cite[Theorem 2.19]{CFP}, the map $z\mapsto \Lambda^{b}_{z}$ and the resolvent formula 
\eqref{resolvent} extend to $Z_{\Lambda^{b}}:=\rho(\Delta_{\Lambda^{b}})\cap \CO\backslash(-\infty,0]$. The choice $$b_{11}=\frac1{b_{\-}-b_{\+}}\,,\quad b_{12}=\frac{{b_{\-}+b_{\+}}}{b_{\-}-b_{\+}}\,,\quad b_{22}=\frac{b_{\-}b_{\+}}{b_{\-}-b_{\+}}\,,$$ gives $\Delta_{\Lambda^{b}}=\Delta_{\Omega_{\-}}^{R}\oplus \Delta_{\Omega_{\+}}^{R}$, where $\Delta_{\Omega_{\-/\+}}^{R}$ denotes the Laplacian in $L^{2}(\Omega_{\-/\+})$ with Robin boundary conditions $\gamma_{1}^{\-/\+}u_{\-/\+}=b_{\-/\+}\gamma_{0}^{\-/\+}u_{\-/\+}$ (see \cite[Section 5.3]{JDE}). Notice that, since $\gamma_{1}^{\-/\+}$ are both defined in terms of the outward normal vector, the case describing the same Robin boundary conditions at both sides of $\Gamma$ corresponds to the choice $b_{\-}=b=-b_{\+}$ (thus $b_{11}=\frac1{2b}$, $b_{12}=0$, $b_{22}=-\frac{b}{2}$). 
\par
Arguing as in \cite[page 1480]{JST}, one shows that $\Delta_{\Lambda^{b}}$ is bounded from above; moreover $\ran(\Lambda_{z}^{b})=L^{2}(\Gamma)\oplus H^{1/2}(\Gamma)$ is compactly embedded in $\K^{*}=B^{-3/2}_{2,2}(\Gamma)\oplus H^{-1/2}(\Gamma)$. Thus Theorem \ref{scattering} applies and the limit operator $\Lambda^{b,+}_{\lambda}$ exists for any $\lambda\in E^{-}_{\Lambda^{b}}$ and $\Lambda^{b,+}_{\lambda}=(M_{\lambda}^{b,+})^{-1}$, where 
$$
M^{b,+}_{\lambda}=\left[\begin{matrix} b_{11}+ \gamma_{0}\SL^{+}_{\lambda}&b_{12}+ \gamma_{0}\DL^{+}_{\lambda}\\
b_{12}^{*}+ \gamma_{1}\SL^{+}_{\lambda}&b_{22}+ \gamma_{1}\DL^{+}_{\lambda}\end{matrix}\right]\,.
$$
Proceeding exactly in the same way as in the proof of Lemma \ref{rob}, one shows that $M^{b,+}_{\lambda}$ 
is the sum of a sign-definite operator plus a compact one. Therefore Theorem \ref{cor}  applies to $F^{\Lambda^{b}}_{\lambda}$, $\lambda\in E^{-}_{DN}\cap E^{-}_{\Lambda^{b}}$.

\section{Inverse Scattering for the Laplace operator with boundary conditions on non-closed Lipschitz surfaces.} 
We focus now on the case of boundary conditions assigned on a relatively open subset $\Sigma$ of the boundary $\Gamma$ of the domain $\Omega$. In this framework $\Delta_{\Lambda}$ provides models of obstacles supported on the non-closed interface $\Sigma$; our aim is to determine $\Sigma$ from the knowledge of the Scattering Matrix by implementing  the Factorization Method. An important difference with respect to the previous case appears: in fact the crucial coercivity hypothesis in Theorem \ref{cor} (by Lemma \ref{crit}, $M^{+}_{\lambda}$ there needs to be coercive) fails to hold in the spaces $\X^{s}_{\sharp}$, which are made of functions defined on the whole $\Gamma$ (see Notation \ref{not}).
To avoid such a problem one introduces (as in \cite{JDE} and \cite{JST}) projectors onto subspaces of functions supported on $\Sigma$. 
In the following, given $X\subset\Gamma$ closed, we use the definition $$H^{s}_{X}(\Gamma):=\{\phi\in H^{s}(\Gamma):\supp(\phi)\subseteq X\}\,.$$ Given $\Sigma\subset \Gamma$ relatively open with a Lipschitz boundary,  we denote by $\Pi_{\Sigma}$ the orthogonal projector in the Hilbert space $H^{s}(\Gamma)$, $|s|\le 1$, such that $\ran(\Pi_{\Sigma})$ is the subspace orthogonal to $H^{s}_{\Sigma^{c}}(\Gamma)$.
\begin{lemma}\label{proj} The orthogonal projection $\Pi_{\Sigma}$ identifies with the restriction map 
$$
R_{\Sigma}:H^{s}(\Gamma)\to H^{s}(\Sigma)\,,\qquad R_{\Sigma}\phi:=\phi|\Sigma
$$ 
and  its dual $\Pi^{*}_{\Sigma}$ identifies with the embedding 
$$
R^{*}_{\Sigma}:H^{-s}_{\overline\Sigma}(\Gamma)\to H^{-s}(\Gamma)\,,\qquad R^{*}_{\Sigma}\phi=\phi\,.
$$
\end{lemma}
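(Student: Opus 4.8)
The plan is to establish the identification $\Pi_\Sigma = R_\Sigma$ in two halves: first that $R_\Sigma$ really is the orthogonal projection onto the orthogonal complement of $H^s_{\Sigma^c}(\Gamma)$, and then that the dual operator $R_\Sigma^*$ is the inclusion $H^{-s}_{\overline\Sigma}(\Gamma)\hookrightarrow H^{-s}(\Gamma)$. For the first half, I would recall the standard characterization of the Sobolev space $H^s(\Sigma)$ on a relatively open Lipschitz subset as the quotient $H^s(\Gamma)/H^s_{\Sigma^c}(\Gamma)$, equivalently as the space of restrictions to $\Sigma$ equipped with the quotient norm (see \cite[Chapter 3]{McL}); under this identification the restriction map $R_\Sigma:H^s(\Gamma)\to H^s(\Sigma)$ is a coisometry whose kernel is precisely $H^s_{\Sigma^c}(\Gamma)=\{\phi\in H^s(\Gamma):\phi|\Sigma=0\}$. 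Restricting $R_\Sigma$ to the orthogonal complement $\big(H^s_{\Sigma^c}(\Gamma)\big)^\perp$ gives an isometric isomorphism onto $H^s(\Sigma)$, and composing with this isometric identification shows that $R_\Sigma$, viewed as an operator on $H^s(\Gamma)$ into itself via that identification, acts as the orthogonal projection onto $\big(H^s_{\Sigma^c}(\Gamma)\big)^\perp$, which is by definition $\Pi_\Sigma$.

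For the second half, I would compute the dual (here the Hilbert-space adjoint with respect to the $L^2(\Gamma)$-pairing, so that $\big(H^s(\Gamma)\big)^*=H^{-s}(\Gamma)$ and $\big(H^s(\Sigma)\big)^*=H^{-s}_{\overline\Sigma}(\Gamma)$, again following \cite[Chapter 3]{McL}). For $\phi\in H^{-s}_{\overline\Sigma}(\Gamma)$ and $\psi\in H^s(\Gamma)$ one has
$$
\langle R_\Sigma^*\phi,\psi\rangle_{H^{-s}(\Gamma),H^s(\Gamma)}=\langle \phi,R_\Sigma\psi\rangle_{H^{-s}_{\overline\Sigma}(\Gamma),H^s(\Sigma)}=\langle\phi,\psi|\Sigma\rangle\,,
$$
and since $\phi$ is supported in $\overline\Sigma$ the right-hand side equals $\langle\phi,\psi\rangle_{H^{-s}(\Gamma),H^s(\Gamma)}$; hence $R_\Sigma^*\phi=\phi$, i.e. $R_\Sigma^*$ is the canonical embedding. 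The fact that $\Pi_\Sigma^*$ coincides with $R_\Sigma^*$ then follows by taking the dual of the identification proved in the first half, using that the dual of an orthogonal projection with range $\big(H^s_{\Sigma^c}(\Gamma)\big)^\perp$ is, under the $L^2(\Gamma)$-pairing, the projection whose range is the annihilator $\big(H^s_{\Sigma^c}(\Gamma)\big)^\perp{}^\perp$ inside $H^{-s}(\Gamma)$, and this annihilator is exactly $H^{-s}_{\overline\Sigma}(\Gamma)$ (elements of $H^{-s}(\Gamma)$ annihilating every $H^s$-function vanishing on $\Sigma$ are precisely those supported in $\overline\Sigma$).

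The main obstacle I anticipate is purely bookkeeping about dual pairings and the precise definitions of $H^s(\Sigma)$, $H^s_{\overline\Sigma}(\Gamma)$, and $H^s_{\Sigma^c}(\Gamma)$: one must be careful to use the $L^2(\Gamma)$-pairing consistently (so that $\big(H^s(\Sigma)\big)^*$ is identified with $H^{-s}_{\overline\Sigma}(\Gamma)$ rather than with some other completion), and to invoke the Lipschitz regularity of $\partial\Sigma$ at the one point where it matters, namely to guarantee that there is no gap between $H^{-s}_{\Sigma}(\Gamma)$ and $H^{-s}_{\overline\Sigma}(\Gamma)$ and that the restriction map is surjective onto the intrinsically-defined $H^s(\Sigma)$ for the relevant range $|s|\le 1$. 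Granting those standard facts from \cite[Chapter 3]{McL}, the argument is a short diagram chase and no genuine analytic difficulty remains.
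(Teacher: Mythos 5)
Your proposal is correct and follows essentially the same route as the paper: both identify $R_{\Sigma}$ with $\Pi_{\Sigma}$ via the (quotient-norm) isometric isomorphism between $\big(H^{s}_{\Sigma^{c}}(\Gamma)\big)^{\perp}=\ran(\Pi_{\Sigma})$ and $H^{s}(\Sigma)$, and both rest on the duality $\big(H^{s}(\Sigma)\big)^{*}\simeq H^{-s}_{\overline\Sigma}(\Gamma)$ with respect to the $L^{2}(\Gamma)$-pairing (the paper cites Hsiao--Wendland for this, you cite McLean). Your explicit computation of $R_{\Sigma}^{*}$ via the pairing merely spells out what the paper asserts, and your closing caveat about the possible gap between $H^{-s}_{\Sigma}(\Gamma)$ and $H^{-s}_{\overline\Sigma}(\Gamma)$ correctly locates the only point where the Lipschitz regularity of $\partial\Sigma$ is actually used.
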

\begin{proof} By \cite[page 77]{McL}, the map 
$$\U_{\Sigma}: \ran(\Pi_{\Sigma})\to H^{s}(\Sigma)\,,\quad
{\rm U}_{\Sigma}(\Pi_{\Sigma}\phi):=(\Pi_{\Sigma}\phi)|\Sigma=\phi|\Sigma$$ is an unitary isomorphism. Therefore we can regard $H^{s}(\Sigma)$ as a closed subspace of $H^{s}(\Gamma)$. Using the decomposition $\phi=(\uno-\Pi_{\Sigma})\phi\oplus\U_{\Sigma}^{-1}(\phi|\Sigma)$, 
the restriction operator $R_{\Sigma}\phi:=0\oplus{\rm U}_{\Sigma}\Pi_{\Sigma}\phi=0\oplus(\phi|\Sigma)$ is the orthogonal projection from $H^{s}(\Gamma)\simeq H^{s}_{\Sigma^{c}}(\Gamma)\oplus H^{s}(\Sigma)$ onto $H^{s}(\Sigma)$. Thus, using the identifications $\ran(\Pi_{\Sigma})\simeq H^{s}(\Sigma)$ and $H^{-s}_{\overline\Sigma}(\Gamma)\simeq H^{s}(\Sigma)^{*}$ (see, e.g., \cite[Lemma 4.3.1]{HW}), the orthogonal projection $\Pi_{\Sigma}$ identifies with $R_{\Sigma}$ 
and  its dual $\Pi^{*}_{\Sigma}$ identifies with $R^{*}_{\Sigma}$.
\end{proof}
\begin{remark}\label{RS} Let us notice that if a bounded linear operator $M:H^{-s}(\Gamma)\to H^{s}(\Gamma)$ is coercive then $R_{\Sigma}M R_{\Sigma}^{*}:H^{-s}_{\overline\Sigma}(\Gamma)\to H^{s}(\Sigma)$ is coercive as well by
$$
|\langle\phi,R_{\Sigma}M R_{\Sigma}^{*}\phi\rangle_{H^{-s}_{\overline\Sigma}(\Gamma),H^{s}(\Sigma)}|=|\langle R^{*}_{\Sigma}\phi,M R_{\Sigma}^{*}\phi\rangle_{H^{-s}(\Gamma),H^{s}(\Gamma)}|\ge c\,\|R_{\Sigma}^{*}\phi\|^{2}_{H^{-s}(\Gamma)}=c\,\|\phi\|^{2}_{H_{\overline\Sigma}^{-s}(\Gamma)}\,.
$$
Therefore (see Remark \ref{inv}) $(R_{\Sigma}M R_{\Sigma}^{*})^{-1}\in\B(H^{s}(\Sigma),H_{\overline\Sigma}^{-s}(\Gamma))$. Moreover, if $M=M_{\circ}+K$ with $M_{\circ}$ sign-definite and $K$ compact, then $R_{\Sigma}M R_{\Sigma}^{*}=R_{\Sigma}M_{\circ}R_{\Sigma}^{*}+R_{\Sigma}KR^{*}_{\Sigma}$, with $R_{\Sigma}M_{\circ}R_{\Sigma}^{*}$  sign-definite and $R_{\Sigma}KR^{*}_{\Sigma}$ compact. Analogously, if Im$\langle\phi,M\phi\rangle_{H^{-s}(\Gamma),H^{s}(\Gamma)}=0$ implies $\phi=0$, then Im$\langle\phi,R_{\Sigma}M R_{\Sigma}^{*}\phi\rangle_{H_{\overline\Sigma}^{-s}(\Gamma),H^{s}(\Sigma)}=0$ implies $R^{*}_{\Sigma}\phi=0$ and hence $\phi=0$.\par
The same considerations apply to $M:H^{-s}(\Gamma)\oplus H^{-t}(\Gamma)\to H^{s}(\Gamma)\oplus H^{t}(\Gamma)$ and  $(R_{\Sigma}\oplus R_{\Sigma})M (R_{\Sigma}^{*}\oplus R_{\Sigma}^{*}):H^{-s}_{\overline\Sigma}(\Gamma)\oplus H^{-t}_{\overline\Sigma}(\Gamma)\to H^{s}(\Sigma)\oplus H^{t}(\Sigma)$.  
\end{remark}
In the following $\Gamma_{\!\circ}$ is the Lipschitz boundary of an open bounded set $\Omega_{\circ}\subset\RE^{3}$ and $\Sigma_{\circ}\subset\Gamma_{\!\circ}$ is relatively open with Lipschitz boundary.
\begin{lemma}\label{sigma} Let $\Sigma\subset \Gamma$  and $\Sigma_{\circ}\subset \Gamma_{\!\circ}$ such that $\RE^{3}\backslash(\Sigma_{\circ}\cup\Sigma)$ is connected. Then 
$$
\Sigma_{\circ}\subset\Sigma\iff \phi^{\Sigma_{\circ}}_{\lambda}\in\ran(L^{\sharp}_{\lambda}|H^{s-s_{\sharp}}_{\overline \Sigma}(\Gamma))\,,\quad \sharp=D,N\,,
$$
where
\be\label{phiS}
\phi^{\Sigma_{\circ}}_{\lambda}(\xi):=\int_{\Sigma_{\circ}}\phi_{\lambda}^{x}(\xi)\,d\sigma_{\Gamma_{\!\circ}}(x)\equiv \int_{\Sigma_{\circ}}e^{i|\lambda|^{1/2}\xi\cdot x} \,d\sigma_{\Gamma_{\!\circ}}(x)
\,.
\ee
\end{lemma}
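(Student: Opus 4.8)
The plan is to mirror the proof of Lemma~\ref{LL} (and its refinement in Corollary~\ref{cLL}): reduce the two-sided inclusion to a statement about the ranges of the data-to-pattern operators, and then argue by Rellich's lemma together with the unique continuation principle, the genuinely new ingredient being a local singularity analysis of a single-layer potential carried by the probe screen $\Sigma_{\circ}$.

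First I would record, exactly as in the derivation of \eqref{LLL}, that for a density $\phi$ with $\supp\phi\subseteq\overline\Sigma$ the element $L^{\sharp}_{\lambda}\phi$ is, up to the nonzero constant $c_{\lambda}:=2^{-1/2}|\lambda|^{1/4}(2\pi)^{-3/2}$, the far-field pattern of the layer potential $\SL^{+}_{\lambda}\phi$ (for $\sharp=D$) or $\DL^{+}_{\lambda}\phi$ (for $\sharp=N$); this potential is a radiating solution of $(-\Delta+\lambda)u=0$ on $\RE^{3}\backslash\overline\Sigma$ which is moreover smooth across $\Gamma\backslash\overline\Sigma$. Hence $\ran(L^{\sharp}_{\lambda}|H^{s-s_{\sharp}}_{\overline\Sigma}(\Gamma))$ equals $c_{\lambda}$ times the set of far-field patterns of such layer potentials with density supported in $\overline\Sigma$. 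On the test-function side, since $\phi^{x}_{\lambda}$ is, in the conventions of \eqref{phi}, the far-field pattern of $R^{0,+}_{\lambda}\delta_{x}$, integrating over $x\in\Sigma_{\circ}$ shows that $\phi^{\Sigma_{\circ}}_{\lambda}$ is $c_{\lambda}$ times the far-field pattern of $w_{\circ}:=R^{0,+}_{\lambda}\delta_{\Sigma_{\circ}}$, the single-layer potential over $\Gamma_{\!\circ}$ with unit density on $\Sigma_{\circ}$ ($\delta_{\Sigma_{\circ}}$ being the surface measure of $\Sigma_{\circ}$); near every relative-interior point of $\Sigma_{\circ}$ this $w_{\circ}$ exhibits the usual single-layer singularity, namely it is continuous there with a nonzero jump in its normal derivative across $\Gamma_{\!\circ}$, so that $w_{\circ}\notin H^{3/2}$ in any neighbourhood of such a point.

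For the implication ``$\Rightarrow$'': if $\Sigma_{\circ}\subseteq\Sigma$ then $\overline{\Sigma_{\circ}}\subseteq\overline\Sigma\subseteq\Gamma$, and since $\Sigma_{\circ}$ carries the same surface measure whether regarded inside $\Gamma_{\!\circ}$ or inside $\Gamma$, $w_{\circ}=\SL^{+}_{\lambda}\uno_{\Sigma_{\circ}}$ for the single-layer potential over $\Gamma$; the density $\uno_{\Sigma_{\circ}}$, extended by zero, lies in $H^{s-1/2}_{\overline\Sigma}(\Gamma)$ for every $s\in[0,1/2]$ (it is in $L^{2}(\Gamma)$ with support in $\overline{\Sigma_{\circ}}$), so $\phi^{\Sigma_{\circ}}_{\lambda}\in\ran(L^{D}_{\lambda}|H^{s-1/2}_{\overline\Sigma}(\Gamma))$, and for $\sharp=N$ one argues likewise after invoking the intertwining identities $K^{N}_{\lambda}\gamma_{1}\SL^{+}_{\lambda}=K^{D}_{\lambda}\gamma_{0}\SL^{+}_{\lambda}$, $K^{N}_{\lambda}\gamma_{1}\DL^{+}_{\lambda}=K^{D}_{\lambda}\gamma_{0}\DL^{+}_{\lambda}$ from the proof of Corollary~\ref{cLL} together with the bijectivity of $\gamma_{1}\DL^{+}_{\lambda}$ on the pertinent Sobolev scale, so as to produce the required double-layer density with support in $\overline\Sigma$. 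For ``$\Leftarrow$'': assume $\phi^{\Sigma_{\circ}}_{\lambda}\in\ran(L^{\sharp}_{\lambda}|H^{s-s_{\sharp}}_{\overline\Sigma}(\Gamma))$ and let $v$ be the layer potential realizing it, a radiating solution of $(-\Delta+\lambda)v=0$ on $\RE^{3}\backslash\overline\Sigma$ with the same far-field pattern as $w_{\circ}$; by Rellich's lemma and unique continuation, $v=w_{\circ}$ on the unbounded component of $\RE^{3}\backslash(\overline\Sigma\cup\overline{\Sigma_{\circ}})$, and the connectedness hypothesis makes this open set connected, so $v=w_{\circ}$ on all of it. If we had $\Sigma_{\circ}\not\subseteq\Sigma$ we could choose a relative-interior point $x_{0}$ of $\Sigma_{\circ}$ with $x_{0}\notin\overline\Sigma$ and a ball $U\ni x_{0}$ with $U\cap\overline\Sigma=\emptyset$; on $U$ the function $v$ solves $(-\Delta+\lambda)v=0$, hence is smooth and has no jump in its normal derivative across $\Gamma_{\!\circ}\cap U$, whereas $v=w_{\circ}$ on $U\backslash\overline{\Sigma_{\circ}}$ and $w_{\circ}$ has a nonzero such jump at $x_{0}$ --- a contradiction. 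Hence $\Sigma_{\circ}\subseteq\Sigma$.

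The step I expect to be the main obstacle is the potential-theoretic bookkeeping behind the first two paragraphs, and specifically, in the forward direction for $\sharp=N$, the representation of $\phi^{\Sigma_{\circ}}_{\lambda}$ as the far-field pattern of a double-layer potential with density supported exactly in $\overline\Sigma$: here the interplay of $\SL^{+}_{\lambda}$, $\DL^{+}_{\lambda}$, $K^{D}_{\lambda}$ and $K^{N}_{\lambda}$ has to be used with some care. A secondary but necessary point is the bookkeeping of closures --- one needs $\RE^{3}\backslash(\overline\Sigma\cup\overline{\Sigma_{\circ}})$, not merely $\RE^{3}\backslash(\Sigma_{\circ}\cup\Sigma)$, to be connected in order to propagate $v=w_{\circ}$ by unique continuation, and the conclusion ``$\Sigma_{\circ}\subset\Sigma$'' should be read up to sets of vanishing surface measure, as is intrinsic to what the factorization method can resolve.
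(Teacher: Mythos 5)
Your architecture coincides with the paper's: both proofs reduce the statement to a question about far fields of layer potentials carried by $\overline\Sigma$, and both settle the reverse implication by Rellich's lemma, unique continuation on the connected complement, and a local singularity of the probe potential at a relative-interior point of $\Sigma_{\circ}$ lying outside $\overline\Sigma$ (the paper phrases the contradiction as failure of $H^{2}$-regularity in a small ball rather than as a jump of the normal derivative, but this is the same observation). Your forward implication for $\sharp=D$ is also essentially the paper's: $1_{\Sigma_{\circ}}$, viewed as an element of $H^{s-1/2}_{\overline\Sigma}(\Gamma)$, is an admissible single-layer density, and the far field of $\SL^{+}_{\lambda}1_{\Sigma_{\circ}}$ is $\phi^{\Sigma_{\circ}}_{\lambda}$ up to the normalizing constant.

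The gap is exactly where you predicted it, in the forward implication for $\sharp=N$, and it is not merely a matter of missing detail. By \eqref{LLL}, $\ran(L^{N}_{\lambda}|H^{s+1/2}_{\overline\Sigma}(\Gamma))$ consists of far fields of double-layer potentials $\DL^{+}_{\lambda}\varphi$ with $\supp\varphi\subseteq\overline\Sigma$, and all of these satisfy $[\gamma_{1}]=0$ across $\Sigma$. Your candidate $w_{\circ}=\SL^{+}_{\lambda}1_{\Sigma_{\circ}}$ has $[\gamma_{1}]w_{\circ}=1_{\Sigma_{\circ}}\neq 0$ on $\Sigma_{\circ}\subseteq\Sigma$, so by the very Rellich/unique-continuation/jump argument you use in the reverse direction its far field cannot coincide with that of any such $\DL^{+}_{\lambda}\varphi$. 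Invoking the intertwining identities together with ``the bijectivity of $\gamma_{1}\DL^{+}_{\lambda}$'' does not repair this: the inverse of the full-$\Gamma$ operator produces a density on all of $\Gamma$ and destroys the support constraint, and the object actually needed is the invertibility of the compressed operator $R_{\Sigma}\gamma_{1}\DL^{+}_{\lambda}R^{*}_{\Sigma}$ (the paper's \eqref{exN}), a statement about the screen problem proved by separate means (the LAP for $\Delta_{\widetilde\Lambda^{N}}$ plus Costabel's regularity), not a consequence of the closed-surface theory. The paper's own proof handles $\sharp=N$ by changing the probe: it takes $u^{N,\Sigma_{\circ}}_{\lambda}:=\DL^{+}_{\lambda}1_{\Sigma_{\circ}}$, a double layer over $\Gamma_{\!\circ}$, and feeds its Neumann trace on $\Sigma$ into the screen data-to-pattern operator $\widetilde K^{N}_{\lambda}$; in other words, in the Neumann case the admissible test object is a dipole distribution on $\Sigma_{\circ}$, not the monopole distribution \eqref{phiS}. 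To complete your argument you must adjust the forward direction accordingly, and you must also supply the identification $\ran(L^{\sharp}_{\lambda}|H^{s-s_{\sharp}}_{\overline\Sigma}(\Gamma))=\ran(\widetilde K^{\sharp}_{\lambda}|H^{s+s_{\sharp}}(\Sigma))$, which rests on \eqref{exD}--\eqref{exN} and is the substantive new input relative to Lemma \ref{LL}.
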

\begin{proof} Let $\widetilde u^{\sharp}_{\lambda,\phi}$ be the radiating (i.e satisfying the Sommerfeld radiating condition) solution in $\RE^{3}\backslash\overline\Sigma$ of Helmholtz equation $(-\Delta+\lambda)\widetilde u^{\sharp}_{\lambda,\phi}=0$ with either Dirichlet (whenever $\sharp=D$) or Neumann (whenever $\sharp=N$) boundary condition $\phi\in H^{s_{\sharp}}(\Sigma)$. Such a solution exists and is unique in 
\begin{align*}
&H^{1}_{\Delta,loc}(\RE^{3}\backslash\overline\Sigma):=\\
&\{u\in{\mathscr D}^{\prime}(\RE^{3}\backslash\overline\Sigma): u_{B}\in H^{1}(B\cap\RE^{3}\backslash\overline\Sigma),\ \Delta u_{B}\in L^{2}(B\cap\RE^{3}\backslash\overline\Sigma)\ \text{for any open ball $B\supset\overline\Omega$}\}\,,
\end{align*} 
where $u_{B}:=u|B\cap\RE^{3}\backslash\overline\Sigma$ (see \cite[Theorems 3.1 and 3.3]{Agra}, see also \cite[Section 12.8]{Agra-book} and, for the case with smooth boundaries, \cite{Steph}). Then (see, e.g., \cite[Exercise 9.4(iv)]{McL}) there exists a unique $\widetilde u^{\sharp,\infty}_{\lambda,\phi}\in\C^{\infty}({\mathbb S}^{2})$ such that 
$$ \widetilde  u^{\sharp}_{\lambda,\phi}(x)=\frac{e^{i\,|\lambda|^{1/2}\|x\|}}{4\pi\,\|x\|}\, \widetilde  u^{\sharp,\infty}_{\lambda,\phi}(\hat x)+O(\|x\|^{-2})\quad\text{as $\|x\|\to+\infty$, uniformly in $\hat x:=x/\|x\|$.}
$$
This defines the data-to-pattern operator 
$$\widetilde  K^{\sharp}_{\lambda}:H^{s_{\sharp}}(\Sigma)\to L^{2}({\mathbb S}^{2})\,,\quad \widetilde K^{\sharp}_{\lambda}\phi:=\widetilde u^{\sharp,\infty}_{\lambda,\phi}\,.
$$
Introducing the Herglotz operators $$
\widetilde H^{\sharp}_{\lambda}:L^{2}({\mathbb S}^{2})\to H^{s_{\sharp}}(\Sigma)\,,\qquad 
\widetilde H^{\sharp}_{\lambda}:=R_{\Sigma} H^{\sharp}_{\lambda}\,,
$$ 
where $H^{\sharp}_{\lambda}$ is defined in \eqref{Herg}, one has, for any $\phi\in H_{\overline \Sigma}^{-s_{\sharp}}(\Gamma)$ and $f\in L^{2}({\mathbb S}^{2})$,
$$
\langle L_{\lambda}^{\sharp}\phi,f\rangle_{L^{2}({\mathbb S}^{2})}=\frac1{2^{1/2}}\,\frac{|\lambda|^{1/4}}{(2\pi)^{3/2}}\, \langle\phi,\widetilde  H^{\sharp}_{\lambda}f\rangle_{H_{\overline \Sigma}^{-s_{\sharp}}(\Gamma), H^{s_{\sharp}}(\Sigma)}
=\frac1{2^{1/2}}\,\frac{|\lambda|^{1/4}}{(2\pi)^{3/2}}\, \langle (\widetilde {H}^{\sharp}_{\lambda})^{*}\phi, f\rangle_{L^{2}({\mathbb S}^{2})}\,.
$$
Proceeding as in \cite[proofs of Theorems 1.15 and 1.26]{KG} leading to \eqref{Herg1}, one gets
$$
(\widetilde H^{D}_{\lambda})^{*}=\widetilde K^{D}_{\lambda}R_{\Sigma}\gamma_{0}\SL^{+}_{\lambda}R_{\Sigma}^{*}\,,
\qquad (\widetilde H^{N}_{\lambda})^{*}=\widetilde K^{N}_{\lambda}R_{\Sigma}\gamma_{1}\DL^{+}_{\lambda}R_{\Sigma}^{*}\,,
$$
and so
$$
L^{D}_{\lambda}=\frac1{2^{1/2}}\,\frac{|\lambda|^{1/4}}{(2\pi)^{3/2}}\,\widetilde K^{D}_{\lambda}R_{\Sigma}\gamma_{0}\SL^{+}_{\lambda}R_{\Sigma}^{*}\,,\qquad 
L^{N}_{\lambda}=\frac1{2^{1/2}}\,\frac{|\lambda|^{1/4}}{(2\pi)^{3/2}}\,\widetilde K^{N}_{\lambda}R_{\Sigma}\gamma_{1}\DL^{+}_{\lambda}R_{\Sigma}^{*}\,.
$$
By the mapping properties of $\SL^{+}_{\lambda}$ and $\DL^{+}_{\lambda}$  and by Remark \ref{RS}, one has $R_{\Sigma}\gamma_{0}\SL^{+}_{\lambda}R_{\Sigma}^{*}\in\B(H_{\overline\Sigma}^{s-1/2}(\Gamma),H^{s+1/2}(\Sigma))$  
and $R_{\Sigma}\gamma_{1}\DL^{+}_{\lambda}R_{\Sigma}^{*}\in\B(H_{\overline\Sigma}^{s+1/2}(\Gamma),H^{s-1/2}(\Sigma))$, $s\in [0,1/2]$. These maps 
are bijections (by \eqref{exD}, \eqref{exN} in next Subsections \ref{Ds} and \ref{Ns} and by the regularity results in \cite[Theorem 3]{costa}; see also \cite{Steph} for the case of smooth boundaries), and so  
$$
\ran(L^{\sharp}_{\lambda}|H_{\overline\Sigma}^{s-s_{\sharp}}(\Gamma))=\ran(\widetilde K^{\sharp}_{\lambda}|H^{s+s_{\sharp}}(\Sigma))
\,.
$$ 
Therefore to conclude the proof we need to show that 
$$
\Sigma_{\circ}\subset\Sigma\iff \phi^{\Sigma_{\circ}}_{\lambda}\in
\ran(\widetilde K^{\sharp}_{\lambda}|H^{s+s_{\sharp}}(\Sigma))\,.
$$
Here we follows the same kind of reasonings as in \cite[Section 3.2]{KK}. Assume that $\Sigma_{\circ}\subset\Sigma$; let $u_{\lambda}^{\sharp,\Sigma_{\circ}}$ be defined according to 
$$
u_{\lambda}^{D,\Sigma_{\circ}}:=\SL_{\lambda}^{+}1_{\Sigma_{\circ}}\,,\qquad u_{\lambda}^{N,\Sigma_{\circ}}:=\DL_{\lambda}^{+}1_{\Sigma_{\circ}}\,.
$$
It solves the Helmoltz equation $(-\Delta+\lambda)u_{\lambda}^{\sharp,\Sigma_{\circ}}=0$ in $\RE^{3}\backslash\overline{\Sigma_{\circ}}$ and hence in $\RE^{3}\backslash\overline{\Sigma}$ as well. 
Let $\phi^{D}_{\Sigma_{\circ}}:=R_{\Sigma}\gamma_{0} u_{\lambda}^{D,\Sigma_{\circ}}\in H^{1/2}(\Sigma)$, $\phi^{N}_{\Sigma_{\circ}}:=R_{\Sigma}\gamma_{1} u_{\lambda}^{N,\Sigma_{\circ}}\in H^{-1/2}(\Sigma)$. Then $\widetilde K^{\sharp}_{\lambda}\phi^{\sharp}_{\Sigma_{\circ}}=\phi^{\Sigma_{\circ}}_{\lambda}$. Suppose now that $\Sigma_{\circ}\cap\Sigma^{c}\not=\emptyset$. Let $B\subset\RE^{3}$ be an open ball such that $\overline B\cap\overline\Sigma=\emptyset$, $B\cap\Sigma_{\circ}\not=\emptyset$. Assume that $\phi^{\Sigma_{\circ}}_{\lambda}=\widetilde K^{\sharp}_{\lambda}\phi_{\sharp}$ for some $\phi_{\sharp}\in H^{s+s_{\sharp}}(\Sigma)$ and consider the corresponding radiating solution $\widetilde u^{\sharp}_{\lambda,\phi_{\sharp}}$. Then, since $\widetilde K^{\sharp}_{\lambda}\phi_{\sharp}=\widetilde K^{\circ,\sharp}_{\lambda}\phi^{\circ,\sharp}_{\Sigma_{\circ}}$ (here the apex ${\!\,}^\circ$ denotes objects defined by using the surface $\Gamma_{\!\circ}$), one has, by Rellich's Lemma and unique continuation, $\widetilde u^{\sharp}_{\lambda,\phi_{\sharp}}|\RE^{3}\backslash(\Sigma_{\circ}\cup\Sigma)=u_{\lambda}^{\circ,\sharp,\Sigma_{\circ}}|\RE^{3}\backslash(\Sigma_{\circ}\cup\Sigma)$.  By elliptic regularity, 
$(-\Delta+\lambda)\widetilde u^{\sharp}_{\lambda,\phi}|B=0$ implies 
$\widetilde u^{\sharp}_{\lambda,\phi}|B\in H^{2}(B)$; this leads to a contradiction, since $u_{\lambda}^{\circ,\sharp,\Sigma_{\circ}}|B\notin H^{2}(B)$.
\end{proof}
By the same kind of proof provided for Corollary \ref{cLL}, one gets the following:
\begin{corollary}\label{cLLs} 
Let $\Sigma\subset \Gamma$  and $\Sigma_{\circ}\subset \Gamma_{\!\circ}$  such that $\RE^{3}\backslash(\Sigma_{\circ}\cup\Sigma)$ is connected. Then 
$$
\Sigma_{\circ}\subset\Sigma\iff \phi^{\Sigma_{\circ}}_{\lambda}\in\ran(L^{DN}_{\lambda}|H^{s-1/2}_{\overline \Sigma}(\Gamma)\oplus H^{t+1/2}_{\overline \Sigma}(\Gamma))\,,\quad s,t\in[0,1/2]\,.
$$
\end{corollary}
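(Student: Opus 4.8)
The plan is to transcribe the proof of Corollary \ref{cLL} to the non-closed surface $\Sigma$, using Lemma \ref{sigma} (and the contradiction argument contained in its proof) in place of Lemma \ref{LL}, and inserting the compressions $R_{\Sigma}$, $R_{\Sigma}^{*}$ of Lemma \ref{proj} wherever the bulk operators occur. First I would establish the analogue of \eqref{L-DN} on the supported subspace: exactly as the formulae for $L^{D}_{\lambda}$ and $L^{N}_{\lambda}$ were obtained in the proof of Lemma \ref{sigma}, one gets, for $\phi\oplus\varphi\in H^{s-1/2}_{\overline\Sigma}(\Gamma)\oplus H^{t+1/2}_{\overline\Sigma}(\Gamma)$,
$$
L^{DN}_{\lambda}(\phi\oplus\varphi)=c_{\lambda}\big(\widetilde K^{D}_{\lambda}R_{\Sigma}\gamma_{0}\SL^{+}_{\lambda}R_{\Sigma}^{*}\phi+\widetilde K^{N}_{\lambda}R_{\Sigma}\gamma_{1}\DL^{+}_{\lambda}R_{\Sigma}^{*}\varphi\big),\qquad c_{\lambda}:=\tfrac1{2^{1/2}}\,\tfrac{|\lambda|^{1/4}}{(2\pi)^{3/2}}.
$$
Since $R_{\Sigma}^{*}\phi$ and $R_{\Sigma}^{*}\varphi$ are supported in $\overline\Sigma$, both $\SL^{+}_{\lambda}R_{\Sigma}^{*}\phi$ and $\DL^{+}_{\lambda}R_{\Sigma}^{*}\varphi$ are radiating solutions of the Helmholtz equation in $\RE^{3}\backslash\overline\Sigma$ (for the double layer one uses that the Dirichlet jump across $\Gamma\backslash\overline\Sigma$ vanishes); hence, the far field of such a function being independent of which trace is prescribed, uniqueness of the radiating solution gives $\widetilde K^{D}_{\lambda}R_{\Sigma}\gamma_{0}\SL^{+}_{\lambda}R_{\Sigma}^{*}=\widetilde K^{N}_{\lambda}R_{\Sigma}\gamma_{1}\SL^{+}_{\lambda}R_{\Sigma}^{*}$ and $\widetilde K^{D}_{\lambda}R_{\Sigma}\gamma_{0}\DL^{+}_{\lambda}R_{\Sigma}^{*}=\widetilde K^{N}_{\lambda}R_{\Sigma}\gamma_{1}\DL^{+}_{\lambda}R_{\Sigma}^{*}$, so the right-hand side above may equivalently be written as $c_{\lambda}\widetilde K^{N}_{\lambda}(R_{\Sigma}\gamma_{1}\SL^{+}_{\lambda}R_{\Sigma}^{*}\phi+R_{\Sigma}\gamma_{1}\DL^{+}_{\lambda}R_{\Sigma}^{*}\varphi)$ or as $c_{\lambda}\widetilde K^{D}_{\lambda}(R_{\Sigma}\gamma_{0}\SL^{+}_{\lambda}R_{\Sigma}^{*}\phi+R_{\Sigma}\gamma_{0}\DL^{+}_{\lambda}R_{\Sigma}^{*}\varphi)$.

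With these three equivalent expressions in hand the two implications are immediate. For ``$\Rightarrow$'': assuming $\Sigma_{\circ}\subset\Sigma$, Lemma \ref{sigma} with $\sharp=N$ gives $\varphi_{0}\in H^{t+1/2}_{\overline\Sigma}(\Gamma)$ with $\phi^{\Sigma_{\circ}}_{\lambda}=L^{N}_{\lambda}\varphi_{0}=c_{\lambda}\widetilde K^{N}_{\lambda}R_{\Sigma}\gamma_{1}\DL^{+}_{\lambda}R_{\Sigma}^{*}\varphi_{0}$, so taking $\phi=0$ in the first identity above yields $L^{DN}_{\lambda}(0\oplus\varphi_{0})=\phi^{\Sigma_{\circ}}_{\lambda}$, i.e. $\phi^{\Sigma_{\circ}}_{\lambda}\in\ran(L^{DN}_{\lambda}|H^{s-1/2}_{\overline\Sigma}(\Gamma)\oplus H^{t+1/2}_{\overline\Sigma}(\Gamma))$ for all $s,t$. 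For ``$\Leftarrow$'': if $\phi^{\Sigma_{\circ}}_{\lambda}=L^{DN}_{\lambda}(\phi_{0}\oplus\varphi_{0})$ for some $\phi_{0}\oplus\varphi_{0}$ in that subspace, then by the first of the three expressions $\phi^{\Sigma_{\circ}}_{\lambda}=c_{\lambda}\widetilde K^{N}_{\lambda}\psi$ with $\psi:=R_{\Sigma}\gamma_{1}\SL^{+}_{\lambda}R_{\Sigma}^{*}\phi_{0}+R_{\Sigma}\gamma_{1}\DL^{+}_{\lambda}R_{\Sigma}^{*}\varphi_{0}$, which lies in $H^{r-1/2}(\Sigma)$ for some $r\in[0,1/2]$ by the mapping properties of $\SL^{+}_{\lambda}$, $\DL^{+}_{\lambda}$ recalled before Lemma \ref{sigma} and by Remark \ref{RS}; hence $\phi^{\Sigma_{\circ}}_{\lambda}\in\ran(\widetilde K^{N}_{\lambda}|H^{r-1/2}(\Sigma))$, and the last part of the proof of Lemma \ref{sigma} --- the argument by Rellich's lemma, unique continuation and elliptic regularity, which is exactly where the hypothesis that $\RE^{3}\backslash(\Sigma_{\circ}\cup\Sigma)$ be connected is used --- forces $\Sigma_{\circ}\subset\Sigma$.

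The remaining verifications are routine bookkeeping: one checks that all Sobolev indices produced above stay in $[0,1/2]$, the range in which Lemma \ref{sigma} and the bijections it relies on are available, and that replacing $\widetilde K^{N}_{\lambda}$ by $\widetilde K^{D}_{\lambda}$ throughout (via the third expression) gives the same conclusion. I do not expect a genuine obstacle here, since everything reduces to results already in hand; the only mildly delicate point is the claim that $\DL^{+}_{\lambda}R_{\Sigma}^{*}\varphi$ is a radiating solution in the \emph{whole} of $\RE^{3}\backslash\overline\Sigma$, not merely in $\RE^{3}\backslash\Gamma$, which rests on the vanishing off $\overline\Sigma$ of the Dirichlet jump of the double layer of a density supported in $\overline\Sigma$ and deserves an explicit sentence.
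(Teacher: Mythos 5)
Your overall strategy --- transcribing the proof of Corollary \ref{cLL} to the screen, with the compressed layer operators of Lemma \ref{proj} and the endgame of Lemma \ref{sigma} --- is the intended one, and your ``$\Rightarrow$'' direction is correct. The gap is in the two cross identities $\widetilde K^{D}_{\lambda}R_{\Sigma}\gamma_{0}\SL^{+}_{\lambda}R_{\Sigma}^{*}=\widetilde K^{N}_{\lambda}R_{\Sigma}\gamma_{1}\SL^{+}_{\lambda}R_{\Sigma}^{*}$ and $\widetilde K^{D}_{\lambda}R_{\Sigma}\gamma_{0}\DL^{+}_{\lambda}R_{\Sigma}^{*}=\widetilde K^{N}_{\lambda}R_{\Sigma}\gamma_{1}\DL^{+}_{\lambda}R_{\Sigma}^{*}$, which fail for a screen. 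In the obstacle case the analogous identities hold because $K^{D}_{\lambda}$ and $K^{N}_{\lambda}$ are data-to-pattern maps for the \emph{exterior} domain $\Omega_{\+}$, and any radiating solution in $\Omega_{\+}$ is simultaneously the exterior Dirichlet solution for its one-sided Dirichlet trace and the exterior Neumann solution for its one-sided Neumann trace. For a screen there is no exterior side: $\widetilde K^{N}_{\lambda}\psi$ is by definition the far field of the radiating solution in $\RE^{3}\backslash\overline\Sigma$ whose Neumann traces from \emph{both} sides of $\Sigma$ equal $\psi$, i.e.\ of a solution with $[\gamma_{1}]u=0$ across $\Sigma$; but $\SL^{+}_{\lambda}R^{*}_{\Sigma}\phi$ has $[\gamma_{1}]\SL^{+}_{\lambda}R^{*}_{\Sigma}\phi=R^{*}_{\Sigma}\phi\neq 0$. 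If its far field were $\widetilde K^{N}_{\lambda}\psi$ for some $\psi$, Rellich's lemma and unique continuation in the connected set $\RE^{3}\backslash\overline\Sigma$ would force it to coincide with a jump-free solution, i.e.\ $\phi=0$. The same objection applies to the second identity, with the roles of $[\gamma_{0}]$ and $[\gamma_{1}]$ exchanged. Consequently your ``$\Leftarrow$'' step, which rewrites $L^{DN}_{\lambda}(\phi_{0}\oplus\varphi_{0})$ as $c_{\lambda}\widetilde K^{N}_{\lambda}\psi$ and then quotes the range characterization of $\widetilde K^{N}_{\lambda}$, does not go through; this, not the radiating character of $\DL^{+}_{\lambda}R^{*}_{\Sigma}\varphi$, is the delicate point.

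The repair is short and stays inside your framework: for ``$\Leftarrow$'' do not try to land in $\ran(\widetilde K^{N}_{\lambda})$. Observe instead that $L^{DN}_{\lambda}(\phi_{0}\oplus\varphi_{0})$ equals $c_{\lambda}$ times the far field of $w:=\SL^{+}_{\lambda}R^{*}_{\Sigma}\phi_{0}+\DL^{+}_{\lambda}R^{*}_{\Sigma}\varphi_{0}$, which (as you correctly argue via the vanishing of both jumps on $\Gamma\backslash\overline\Sigma$) is a radiating solution of the Helmholtz equation in all of $\RE^{3}\backslash\overline\Sigma$, smooth away from $\overline\Sigma$. The final paragraph of the proof of Lemma \ref{sigma} uses nothing about $\widetilde u^{\sharp}_{\lambda,\phi_{\sharp}}$ beyond its being such a solution with far field $\phi^{\Sigma_{\circ}}_{\lambda}$: if $\Sigma_{\circ}\cap\Sigma^{c}\neq\emptyset$, Rellich's lemma and unique continuation give $w=u^{\circ,\sharp,\Sigma_{\circ}}_{\lambda}$ on $\RE^{3}\backslash(\Sigma_{\circ}\cup\Sigma)$, and elliptic regularity in a ball $B$ with $\overline B\cap\overline\Sigma=\emptyset$ and $B\cap\Sigma_{\circ}\neq\emptyset$ yields $w|B\in H^{2}(B)$ while $u^{\circ,\sharp,\Sigma_{\circ}}_{\lambda}|B\notin H^{2}(B)$, a contradiction. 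Running that argument directly on $w$ closes the gap; the Sobolev bookkeeping you carried out is then no longer needed for this direction.
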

\begin{notation} We introduce the spaces
$$
\widetilde\X_{D}^{s}:=H^{1/2-s}(\Sigma)\,,\quad \widetilde\X_{N}^{s}:=H^{-1/2-s}(\Sigma) \,,\quad 
\widetilde\X_{DN}^{s}:=H^{1/2-s}(\Sigma)\oplus H^{-s}(\Sigma)\,,\quad 0\le s\le 1/2\,,
$$
so that 
$$
(\widetilde\X_{D}^{s})^{*}:=H^{s-1/2}_{\overline\Sigma}(\Gamma)\,,\quad (\widetilde\X_{N}^{s})^{*}:=H^{s+1/2}_{\overline\Sigma}(\Gamma) \,,\quad 
(\widetilde\X_{DN}^{s})^{*}:=H^{s-1/2}_{\overline\Sigma}(\Gamma)\oplus H^{s}_{\overline\Sigma}(\Gamma)\,,\quad 0\le s\le 1/2\,.
$$
\end{notation}
The following results is the analogue for screens of Theorem \ref{cor} :
\begin{theorem}\label{screen} Let $$F^{\Lambda}_{\lambda}=L^{\sharp}_{\lambda}\Lambda^{+}_{\lambda}{L_{\lambda}^{\sharp}}^{*}\,,\quad \lambda\in  E^{-}_{\Lambda}\,,\quad\sharp=D,N, DN\,,
$$ 
and suppose that  $\Lambda_{\lambda}^{+}=R^{*}_{\Sigma}(M^{+,\Sigma}_\lambda)^{-1}\!R_{\Sigma}$, where the bijection $M^{+,\Sigma}_\lambda\in\B((\widetilde\X_{\sharp}^{s})^{*},\widetilde\X_{\sharp}^{s})$, $s\in[0,1/2]$, has the decomposition $M^{+,\Sigma}_\lambda=M_{\circ}^{+,\Sigma}+K^{+,\Sigma}_{\lambda}$, where $M_{\circ}^{+,\Sigma}$ is sign-definite and $K^{+,\Sigma}_{\lambda}$ is compact. Let $\Sigma_{\circ}\subset \Gamma_{\!\circ}$ such that $\RE^{3}\backslash(\Sigma_{\circ}\cup\Sigma)$ is connected; then 
\begin{align}\label{formula}
\Sigma_{\circ}\subset\Sigma\iff\inf_{\substack{\psi\in L^{2}({\mathbb S}^{2})\\ \langle\psi,\phi^{\Sigma_{\circ}}_{\lambda}\rangle_{L^{2}({\mathbb S}^{2})}=1}}\left|\langle\psi,F^{\Lambda}_{\lambda}\psi\rangle_{L^{2}({\mathbb S}^{2})}\right|>0
\iff\sum_{k=1}^{\infty}\frac{|\langle\phi^{\Sigma_{\circ}}_{\lambda},\psi^{\Lambda}_{\lambda,k}\rangle_{L^{2}({\mathbb S}^{2})}|^{2}}{|z^{\Lambda}_{\lambda,k}|}<+\infty\,,
\end{align}
where the sequences $\{z^{\Lambda}_{\lambda,k}\}_{1}^{\infty}\subset\CO\backslash\{0\}$ and $\{\psi^{\Lambda}_{\lambda,k}\}_{1}^{\infty}\subset L^{2}({\mathbb S}^{2})$ provide the spectral resolution of $F^{\Lambda}_{\lambda}$ as in Remark \ref{rem-cn} and $\phi^{\Sigma_{\circ}}_{\lambda}$ is defined in \eqref{phiS}.
\end{theorem}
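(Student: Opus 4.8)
The plan is to replay the proof of Theorem \ref{cor}, replacing the obstacle $\Omega$ by the screen $\Sigma$, the testing family $\phi^{x}_{\lambda}$ by $\phi^{\Sigma_{\circ}}_{\lambda}$, the range $\ran(L^{\sharp}_{\lambda}|{\X^{s}_{\sharp}}^{*})$ by $\ran(L^{\sharp}_{\lambda}|H^{s-s_{\sharp}}_{\overline\Sigma}(\Gamma))$, and Lemma \ref{LL}, Corollary \ref{cLL} by Lemma \ref{sigma}, Corollary \ref{cLLs}. First I would set, recalling Lemma \ref{proj}, $\widetilde L^{\sharp}_{\lambda}:=L^{\sharp}_{\lambda}R^{*}_{\Sigma}\in\B((\widetilde\X^{s}_{\sharp})^{*},L^{2}({\mathbb S}^{2}))$, so that $R_{\Sigma}{L^{\sharp}_{\lambda}}^{*}={\widetilde L^{\sharp}_{\lambda}}^{*}$ and, since $\ran(R^{*}_{\Sigma})=H^{s-s_{\sharp}}_{\overline\Sigma}(\Gamma)$, one has $\ran(\widetilde L^{\sharp}_{\lambda})=\ran(L^{\sharp}_{\lambda}|H^{s-s_{\sharp}}_{\overline\Sigma}(\Gamma))$. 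Putting $B_{\lambda}:=\widetilde L^{\sharp}_{\lambda}(M^{+,\Sigma}_{\lambda})^{-1}$, the hypothesis $\Lambda^{+}_{\lambda}=R^{*}_{\Sigma}(M^{+,\Sigma}_{\lambda})^{-1}R_{\Sigma}$ gives the factorization $F^{\Lambda}_{\lambda}=B_{\lambda}(M^{+,\Sigma}_{\lambda})^{*}B^{*}_{\lambda}$, with $\ran(B_{\lambda})=\ran(\widetilde L^{\sharp}_{\lambda})$ since $M^{+,\Sigma}_{\lambda}$ is a bijection.

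The core step is to show that $C:=(M^{+,\Sigma}_{\lambda})^{*}$ is coercive, so that the above is an admissible Factorization-Method splitting. Since $M^{+,\Sigma}_{\circ}$ is sign-definite (hence self-adjoint and coercive) and $K^{+,\Sigma}_{\lambda}$ is compact, $C=M^{+,\Sigma}_{\circ}+(K^{+,\Sigma}_{\lambda})^{*}$ has the decomposition required by Lemma \ref{crit}; it then remains to check that $\text{Im}\langle\phi,C\phi\rangle_{(\widetilde\X^{s}_{\sharp})^{*},\widetilde\X^{s}_{\sharp}}\not=0$ for all $\phi\in(\widetilde\X^{s}_{\sharp})^{*}\backslash\{0\}$. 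For this I would argue as in the proof of Theorem \ref{cor}: unitarity of $S^{\Lambda}_{\lambda}$ gives $F^{\Lambda}_{\lambda}-(F^{\Lambda}_{\lambda})^{*}=-2\pi i\,(F^{\Lambda}_{\lambda})^{*}F^{\Lambda}_{\lambda}$, whence, for every $\psi\in L^{2}({\mathbb S}^{2})$,
$$
\text{Im}\langle B^{*}_{\lambda}\psi,C\,B^{*}_{\lambda}\psi\rangle_{(\widetilde\X^{s}_{\sharp})^{*},\widetilde\X^{s}_{\sharp}}=\text{Im}\langle\psi,F^{\Lambda}_{\lambda}\psi\rangle_{L^{2}({\mathbb S}^{2})}=-\pi\,\|F^{\Lambda}_{\lambda}\psi\|^{2}_{L^{2}({\mathbb S}^{2})}\,.
$$
Next I would prove that $\ran(B^{*}_{\lambda})$ is dense, equivalently $\ker(B_{\lambda})=\ker(\widetilde L^{\sharp}_{\lambda})=\{0\}$: for $\sharp=D,N$ this follows from the factorization of $L^{\sharp}_{\lambda}R^{*}_{\Sigma}$ through $\widetilde K^{\sharp}_{\lambda}$ obtained in the proof of Lemma \ref{sigma}, the intermediate operator $R_{\Sigma}\gamma_{0}\SL^{+}_{\lambda}R^{*}_{\Sigma}$ (resp. $R_{\Sigma}\gamma_{1}\DL^{+}_{\lambda}R^{*}_{\Sigma}$) being a bijection and $\widetilde K^{\sharp}_{\lambda}$ being injective by Rellich's Lemma and unique continuation; for $\sharp=DN$ one reduces, as in the proof of Theorem \ref{cor}, to the injectivity of $\phi\oplus\varphi\mapsto\SL^{+}_{\lambda}R^{*}_{\Sigma}\phi+\DL^{+}_{\lambda}R^{*}_{\Sigma}\varphi$, which follows from the limiting-absorption lower bound of \cite[proof of Lemma 3.6]{JMPA} (the compression by the isometric embedding $R^{*}_{\Sigma}$ of a closed subspace preserves that bound). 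Granted density, for $\phi$ with $\text{Im}\langle\phi,C\phi\rangle=0$ one picks $\psi_{n}$ with $B^{*}_{\lambda}\psi_{n}\to\phi$; the displayed identity forces $F^{\Lambda}_{\lambda}\psi_{n}\to 0$, so $C\phi\in\ran(B^{*}_{\lambda})^{\perp}=\{0\}$, whence $\phi=0$ because $C$ is injective ($M^{+,\Sigma}_{\lambda}$ being a bijection). Lemma \ref{crit} then yields coercivity of $C$.

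With the admissible factorization $F^{\Lambda}_{\lambda}=B_{\lambda}CB^{*}_{\lambda}$ in hand, the three equivalences in \eqref{formula} follow from Kirsch's abstract criteria exactly as in Theorems \ref{inf} and \ref{sum}. By \cite[Theorem 1.16]{KG} and \eqref{coerc}, for $0\not=\phi\in L^{2}({\mathbb S}^{2})$ one has $\phi\in\ran(B_{\lambda})$ if and only if $\inf\{|\langle\psi,F^{\Lambda}_{\lambda}\psi\rangle_{L^{2}({\mathbb S}^{2})}|:\psi\in L^{2}({\mathbb S}^{2}),\ \langle\psi,\phi\rangle_{L^{2}({\mathbb S}^{2})}=1\}>0$. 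Moreover, by Theorem \ref{comp-norm} and Remark \ref{rem-cn}, $F^{\Lambda}_{\lambda}$ is compact and normal; compressing it to $L^{2}_{\perp}({\mathbb S}^{2}):=\ker(F^{\Lambda}_{\lambda})^{\perp}$ and invoking \cite[Theorem 1.23]{KG} as in the proof of Theorem \ref{sum}, one gets $\ran(B_{\lambda})=\ran(|\widetilde F^{\Lambda}_{\lambda}|^{1/2})$, hence $\phi\in\ran(B_{\lambda})$ if and only if $\phi\in\dom(|\widetilde F^{\Lambda}_{\lambda}|^{-1/2})$, i.e. $\sum_{k=1}^{\infty}|z^{\Lambda}_{\lambda,k}|^{-1}|\langle\phi,\psi^{\Lambda}_{\lambda,k}\rangle_{L^{2}({\mathbb S}^{2})}|^{2}<+\infty$. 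Taking $\phi=\phi^{\Sigma_{\circ}}_{\lambda}$ (which is nonzero) and using $\ran(B_{\lambda})=\ran(L^{\sharp}_{\lambda}|H^{s-s_{\sharp}}_{\overline\Sigma}(\Gamma))$ together with Lemma \ref{sigma} (for $\sharp=D,N$) and Corollary \ref{cLLs} (for $\sharp=DN$), one concludes $\phi^{\Sigma_{\circ}}_{\lambda}\in\ran(B_{\lambda})\iff\Sigma_{\circ}\subset\Sigma$; chaining the three equivalences gives \eqref{formula}.

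I expect the main obstacle to be the bookkeeping around the restriction/extension pair $(R_{\Sigma},R^{*}_{\Sigma})$: one must verify that both the sign-definite-plus-compact decomposition and, above all, the injectivity statements needed for coercivity survive the compression. The only genuinely delicate point is the injectivity of $\phi\oplus\varphi\mapsto\SL^{+}_{\lambda}R^{*}_{\Sigma}\phi+\DL^{+}_{\lambda}R^{*}_{\Sigma}\varphi$ in the $DN$ case, which relies on the limiting-absorption estimate of \cite{JMPA}; everything else is a transcription of the proofs of Theorems \ref{inf}, \ref{sum} and \ref{cor} and of Lemma \ref{sigma}.
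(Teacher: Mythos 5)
Your proposal is correct and follows essentially the same route as the paper: the paper's proof uses exactly the factorization $F^{\Lambda}_{\lambda}=\bigl(L^{\sharp}_{\lambda}R^{*}_{\Sigma}(M^{+,\Sigma}_{\lambda})^{-1}\bigr)(M^{+,\Sigma}_{\lambda})^{*}\bigl(L^{\sharp}_{\lambda}R^{*}_{\Sigma}(M^{+,\Sigma}_{\lambda})^{-1}\bigr)^{*}$, establishes $\mathrm{Im}\langle\phi,M^{+,\Sigma}_{\lambda}\phi\rangle\neq 0$ ``by proceeding as in the proof of Theorem \ref{cor}'', invokes Lemma \ref{sigma}, Corollary \ref{cLLs} and \cite[Theorem 1.16]{KG} for the inf-criterion, and concludes the series criterion as in Theorem \ref{sum}. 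Your write-up merely spells out the injectivity of $B_{\lambda}=L^{\sharp}_{\lambda}R^{*}_{\Sigma}(M^{+,\Sigma}_{\lambda})^{-1}$ in the screen setting (via $\widetilde K^{\sharp}_{\lambda}$, the bijections \eqref{exD}--\eqref{exN}, and the limiting-absorption lower bound for $\sharp=DN$), a step the paper leaves implicit.
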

\begin{proof} We use the factorization $F^{\Lambda}_{\lambda}=\big(L^{\sharp}_{\lambda}R^{*}_{\Sigma}(M^{+,\Sigma}_\lambda)^{-1}\big)(M^{+,\Sigma}_{\lambda})^{*}\big(L^{\sharp}_{\lambda}R^{*}_{\Sigma}(M^{+,\Sigma}_\lambda)^{-1}\big)^{*}$. 
By proceeding as in the proof of Theorem \ref{cor}  (where now $B_{\lambda}=L^{\sharp}_{\lambda}R^{*}_{\Sigma}(M_{\lambda}^{+,\Sigma})^{-1}$), one gets $\text{\rm Im}\langle \phi, M^{+,\Sigma}_\lambda\phi\rangle_{(\widetilde\X_{\sharp}^{s})^{*},\widetilde\X_{\sharp}^{s}}$ $\not=0$ for any $\phi\not=0$. Since $M^{+,\Sigma}_\lambda$ is a bijection, one has 
$\ran(L^{\sharp}_{\lambda}R^{*}_{\Sigma}(M^{+,\Sigma}_\lambda)^{-1})=\ran(L^{\sharp}_{\lambda}|(\widetilde\X_{\sharp}^{s})^{*})$. Hence, by Lemma \ref{sigma}, Corollary \ref{cLLs} and by \cite[Theorem 1.16]{KG},  
$$
\Sigma_{\circ}\subset\Sigma\iff\inf_{\substack{\psi\in L^{2}({\mathbb S}^{2})\\ \langle\psi,\phi^{\Sigma_{\circ}}_{\lambda}\rangle_{L^{2}({\mathbb S}^{2})}=1}}\left|\langle\psi,F^{\Lambda}_{\lambda}\psi\rangle_{L^{2}({\mathbb S}^{2})}\right|>0\,.
$$
By proceeding as in the proof of Theorem \ref{sum}, $\ran(L^{\sharp}_{\lambda}R^{*}_{\Sigma}(M^{+,\Sigma}_\lambda)^{-1})=\ran(|\widetilde F^{\Lambda}_{\lambda}|^{1/2})$ and then one concludes by the same arguments.
\end{proof}
\begin{remark} If $M^{+,\Sigma}_{\lambda}$ in Theoren \ref{screen} is merely coercive, then the ``inf'' criterion still holds.
\end{remark}

\subsection{Applications}\label{appli-screens} Here we apply Theorem \ref{screen}  the analogue of models in the examples considered in Section \ref{appli} where now the boundary conditions holds only on $\Sigma$.  Before considering the specific examples, let us explain our strategy. \par
At first, notice that all the examples in Section \ref{appli} consider self-adjoint operators $\Delta_{\Lambda}$ with $\Lambda_{z}=M_{z}^{-1}$, where the map $z\mapsto M_{z}$ satisfies \eqref{M1-M2} (see Remark \ref{MM}). Hence, by Lemma \ref{Im-not-zero}, $\text{\rm Im}\langle \phi, M_{z}\phi\rangle_{{\X_{\sharp}^{s}}^{*},\X_{\sharp}^{s}}$ $\not=0$ for any $\phi\not=0$. Furthermore, all such $M_{z}$'s have a decomposition $M_{z}=M_{\circ}+K_{z}$ with $M_{\circ}$ sign-definite, $K_{z}$ compact; this can be checked by proceeding as in the proof of Lemma \ref{rob} using identities \eqref{Rlim}. Then, by Lemma \ref{crit}, $M_{z}$ is coercive. Now, the dual couple of projectors $R_{\Sigma}$, $R^{*}_{\Sigma}$ in Lemma \ref{proj} come into play: by Remark \ref{RS}, these properties of $M_{z}$ transfer to $M_{z}^{\Sigma}:=R_{\Sigma}M_{z}R^{*}_\Sigma$ (here and in the following lines, $R_{\Sigma}$ has to be replaced by $R_{\Sigma}\oplus R_{\Sigma}$ when one considers example in Subsection \ref{rob-ob}), and so, in particular, $M_{z}^{\Sigma}$ is coercive and $(R_{\Sigma}M_{z}R^{*}_\Sigma)^{-1}\in \B(\widetilde\X_{\sharp}^{s},(\widetilde\X_{\sharp}^{s})^{*})$. Then, setting 
\be\label{lambdatilde}
\widetilde\Lambda_{z}:=R^{*}_{\Sigma}(R_{\Sigma}M_{z}R^{*}_\Sigma)^{-1}R_{\Sigma}\,,
\ee
it is immediate to check that $z\mapsto \widetilde\Lambda_{z}\in \B(\X_{\sharp}^{s},{\X_{\sharp}^{s}}^{*})$ satisfies \eqref{Lambda}, and so, by Theorem \ref{WO}, it defines a self-adjoint operator $\Delta_{\widetilde\Lambda}$. Such an operator describes the model corresponding to the same kind of boundary conditions associated to $\Delta_{\Lambda}$, now assigned only on  $\Sigma$ (see \cite[Section 6]{JDE}, \cite[Section 7]{JST}). Since the limit operator $M^{+}_{\lambda}$ exists  (use \eqref{fLAP}) and,  by Theorem \ref{LH}, the limit $\widetilde\Lambda^{+}_{\lambda}$ exists as well, one gets $\widetilde\Lambda^{+}_{\lambda}=R^{*}_{\Sigma}(R_{\Sigma}M^{+}_{\lambda}R^{*}_\Sigma)^{-1}R_{\Sigma}$. Now, since all $M^{+}_{\lambda}$ appearing in the examples in Section \ref{appli} decompose as the sum of a sign-definite operator plus a compact one, by Remark \ref{RS} the same is true for   $R_{\Sigma}M^{+}_{\lambda}R^{*}_\Sigma$. 
In conclusion, the assumptions  in  Theorem \ref{screen} hold for any $\widetilde\Lambda$ defined as in \eqref{lambdatilde} where $M_{z}$ is any of the operators given in the examples in Section \ref{appli}; hence the reconstruction formula \eqref{formula} applies to $F_{\lambda}^{\widetilde\Lambda}$. In what follows this scheme is implemented case by case.
\subsubsection{Dirichlet screens.}\label{Ds} One considers $\Delta_{\widetilde\Lambda^{D}}$ with $$\widetilde\Lambda^{D}_{z}=-R_{\Sigma}^{*}( R_{\Sigma}\gamma_{0}\SL_{z}R^{*}_{\Sigma})^{-1}R_{\Sigma}\in\B(H^{1/2}(\Gamma),H^{-1/2}(\Gamma))\,,\qquad z\in\CO\backslash\RE\,.$$ $\Delta_{\widetilde\Lambda^{D}}$ is a (bounded from above) self-adjoint representation of the Laplacian on $\RE^{3}\backslash\overline\Sigma$ with homogeneous Dirichlet boundary conditions at $\Sigma$ (see \cite[Example 7.1]{JST}). 
By \cite[Theorem 2.19]{CFP}, the map $z\mapsto \widetilde\Lambda^{D}_{z}$ and the corresponding resolvent formula \eqref{resolvent} extends to $ Z_{\widetilde\Lambda^{D}}:=\rho(\Delta_{\widetilde\Lambda^{D}})\cap \CO\backslash(-\infty,0]=\CO\backslash(-\infty,0]$. By \cite[Theorem 3.7]{JST}, $\sigma_{\rm p}^{-}(\Delta_{\widetilde\Lambda^{D}})$ is empty and so, by Theorem \ref{LH}, 
\be\label{exD}
\forall\lambda\in (-\infty,0)\,,\qquad( R_{\Sigma}\gamma_{0}\SL_{\lambda}^{+}R^{*}_{\Sigma})^{-1}\in\B(H^{1/2}(\Sigma),H^{-1/2}_{\overline\Sigma}(\Gamma))\,.
\ee  
Therefore Theorem  \ref{screen} applies to $
F^{\widetilde\Lambda^{D}}_{\lambda}$, $\lambda\in(-\infty,0)$. 
\subsubsection{Neumann screens.}\label{Ns} One considers $\Delta_{\widetilde\Lambda^{N}}$ with $$\widetilde\Lambda^{N}_{z}=-R_{\Sigma}^{*}( R_{\Sigma}\gamma_{1}\DL_{z}R^{*}_{\Sigma})^{-1}R_{\Sigma}\in\B(H^{-1/2}(\Gamma),H^{1/2}(\Gamma))\,,\qquad z\in\CO\backslash\RE\,.$$ 
$\Delta_{\widetilde\Lambda^{N}}$ is a (bounded from above) self-adjoint representation of the Laplacian on $\RE^{3}\backslash\overline\Sigma$ with homogeneous Neumann boundary conditions at $\Sigma$ (see \cite[Example 7.2]{JST}). By \cite[Theorem 2.19]{CFP}, the map $z\mapsto \widetilde\Lambda^{N}_{z}$ and the corresponding resolvent formula \eqref{resolvent} extends to $ Z_{\widetilde\Lambda^{N}}:=\rho(\Delta_{\widetilde\Lambda^{N}})\cap \CO\backslash(-\infty,0]=\CO\backslash(-\infty,0]$. By \cite[Theorem 3.7]{JST}, 
$\sigma_{\rm p}^{-}(\Delta_{\widetilde\Lambda^{N}})$ is empty and so, by Theorem \ref{LH}, 
\be\label{exN}
\forall\lambda\in (-\infty,0)\,,\qquad( R_{\Sigma}\gamma_{1}\DL_{\lambda}^{+}R^{*}_{\Sigma})^{-1}\in\B(H^{-1/2}(\Sigma),H^{1/2}_{\overline\Sigma}(\Gamma))\,.
\ee  
Therefore Theorem  \ref{screen} applies to $F^{\widetilde\Lambda^{N}}_{\lambda}$, $\lambda\in(-\infty,0)$. 
\subsubsection{Screens with semitransparent boundary conditions $\alpha_{\Sigma}\gamma_{0}u=[\gamma_{1}]u$.}\label{delta-s} Let $\alpha\in L^{\infty}(\Gamma)$ real-valued such that $\sgn(\alpha)$ is constant and $\frac1{\alpha}\in L^{\infty}(\Gamma)$; let us define $\alpha_{\Sigma}:=\alpha|\Sigma$ and $\alpha_{\Sigma}^{-1}\in \B(L^{2}_{\overline\Sigma}(\Gamma),L^{2}(\Sigma))$
 by $\alpha^{-1}_{\Sigma}\phi:=(\alpha^{-1}\phi)|\Sigma$. Since $-\left(\alpha_{\Sigma}^{-1}+ R_{\Sigma}\gamma_{0}\SL_{z}R^{*}_{\Sigma}\right)=R_{\Sigma}M^{\alpha}_{z}R^{*}_{\Sigma}$, where $M^{\alpha}_{z}$ is defined in \eqref{Malpha}, one considers $\Delta_{\widetilde \Lambda^{\alpha}}$, where 
$$\widetilde \Lambda_{z}^{\alpha}=R^{*}_{\Sigma}(R_{\Sigma}M^{\alpha}_{z}R^{*}_{\Sigma})^{-1}R_{\Sigma}\in\B(L^{2}(\Gamma))\,,\qquad z\in\CO\backslash(-\infty,0]\,.
$$
$\Delta_{\widetilde\Lambda^{\alpha}}$ is a self-adjoint representation of the (bounded from above) Laplacian on $\RE^{3}\backslash\overline\Sigma$ with boundary conditions at $\Sigma$ given by $\alpha_{\Sigma}R_{\Sigma}\gamma_{0}u=R_{\Sigma}[\gamma_{1}]u$, $R_{\Sigma}[\gamma_{0}]u=0$  (see \cite[Example 7.4]{JST}). By \cite[Theorem 2.19]{CFP}, the map $z\mapsto\widetilde\Lambda^{\alpha}_{z}$ and the resolvent formula \eqref{resolvent} extend to $Z_{\widetilde\Lambda^{\alpha}}:=\rho(\Delta_{\widetilde\Lambda^{\alpha}})\cap \CO\backslash(-\infty,0]$.  By \cite[Theorem 3.7]{JST},  $\sigma_{\rm p}^{-}(\Delta_{\widetilde \Lambda^{\alpha}})$ is empty and so, by Theorem \ref{LH},
$$\forall\lambda\in(-\infty,0)\,,\qquad \left(\alpha_{\Sigma}^{-1}+ R_{\Sigma}\gamma_{0}\SL_{\lambda}^{+}R^{*}_{\Sigma}\right)^{-1}\in\B(L^{2}(\Sigma),L^{2}_{\overline\Sigma}(\Gamma))\,.
$$  
Therefore Theorem \ref{screen} applies to $
F^{\widetilde\Lambda^{\alpha}}_{\lambda}$, $\lambda\in(-\infty,0)$.
\subsubsection{Screens with semitransparent boundary conditions ${\theta_{\Sigma}}\gamma_{1}u=[\gamma_{0}]u$.}\label{delta'-s} Let ${\theta}\in L^{p}(\Gamma)$, $p>2$; set $\theta_{\Sigma}:=\theta|\Sigma$ and define the corresponding operator $\theta_{\Sigma}\in\B(H_{\overline\Sigma}^{1/2}(\Gamma),H^{-1/2}(\Sigma))$ by $\theta_{\Sigma}\varphi:=(\theta\varphi)|\Sigma$. Since $\left({\theta_{\Sigma}}- R_{\Sigma}\gamma_{1}\DL_{z}R^{*}_{\Sigma}\right)=R_{\Sigma}M^{\theta}_{z}R^{*}_{\Sigma}$, where $M^{\theta}_{z}$ is defined in \eqref{Mtheta}, one considers $\Delta_{\widetilde \Lambda^{\theta}}$, where $$\widetilde \Lambda_{z}^{{\theta}}=R^{*}_{\Sigma}(R_{\Sigma}M_{z}^{\theta}R^{*}_{\Sigma})^{-1}R_{\Sigma}\in\B(H^{-1/2}(\Gamma),H^{1/2}(\Gamma))\,,\qquad z\in\CO\backslash\RE\,.
$$ 
$\Delta_{\widetilde\Lambda^{{\theta}}}$ is a self-adjoint representation of the (bounded from above) Laplacian on $\RE^{3}\backslash\overline\Sigma$ with boundary conditions  at $\Sigma$ given by ${\theta_{\Sigma}}R_{\Sigma}\gamma_{1}u=R_{\Sigma}[\gamma_{0}]u$, $R_{\Sigma}[\gamma_{1}]u=0$ (see \cite[Example 7.5]{JST}). By \cite[Theorem 2.19]{CFP}, the map $z\mapsto\widetilde\Lambda^{\theta}_{z}$ and the resolvent formula \eqref{resolvent} extend to $Z_{\widetilde\Lambda^{\theta}}:=\rho(\Delta_{\widetilde\Lambda^{\theta}})\cap \CO\backslash(-\infty,0]$. By \cite[Theorem 3.7]{JST}, $\sigma_{\rm p}^{-}(\Delta_{\widetilde \Lambda^{{\theta}}})$ is empty and so, by Theorem \ref{LH},   
$$
\forall\lambda\in(-\infty,0)\,,\qquad(\theta_{\Sigma}+ R_{\Sigma}\gamma_{1}\DL_{\lambda}^{+}R^{*}_{\Sigma})^{-1}\in\B(H^{-1/2}(\Sigma), H_{\overline\Sigma}^{1/2}(\Gamma))\,.
$$
Therefore Theorem \ref{screen} applies to $
F^{\widetilde\Lambda^{{\theta}}}_{\lambda}$, $\lambda\in(-\infty,0)$.
\subsubsection{Screens with local boundary  conditions.}\label{rob-s} 
Let $b_{11}\in L^{\infty}(\Gamma)$, $b^{-1}_{11}\in L^{\infty}(\Gamma)$, $b_{22}\in L^{p}(\Gamma)$, $p>2$, $b_{12}\in\C^{\kappa}(\Gamma)$, $0<\kappa<1$, with $b_{11}$ and $b_{22}$ real-valued, $b_{11}<0$. Set $b_{ij}^{\Sigma}:=b_{ij}|\Sigma$ and define the corresponding multiplication operator by $b_{ij}^{\Sigma}\phi:=(b_{ij}\phi)|\Sigma$, where $\supp(\phi)\subseteq\overline\Sigma$.
Since 
$$\left[\begin{matrix} b^{\Sigma}_{11}+ R_{\Sigma}\gamma_{0}\SL_{z}R_{\Sigma}^{*}&b^{\Sigma}_{12}+ R_{\Sigma}\gamma_{0}\DL_{z}R_{\Sigma}^{*}\\
(b^{\Sigma}_{12})^{*}+ R_{\Sigma}\gamma_{1}\SL_{z}R_{\Sigma}^{*}&b^{\Sigma}_{22}+ R_{\Sigma}\gamma_{1}\DL_{z}R_{\Sigma}^{*}\end{matrix}\right]
=(R_{\Sigma}\oplus R_{\Sigma})M^{b}_{z}(R^{*}_{\Sigma}\oplus R_{\Sigma}^{*})\,,
$$ 
where $M^{b}_{z}$ is defined in Lemma \eqref{rob}, one considers $\Delta_{\widetilde \Lambda^{b}}\,,
$ 
where 
$$
\widetilde \Lambda_{z}^{b}=(R^{*}_{\Sigma}\oplus R^{*}_{\Sigma})(M_{z}^{b})^{-1}(R_{\Sigma}\oplus R_{\Sigma})\in \B(L^{2}(\Gamma)\oplus H^{-1/2}(\Gamma), L^{2}(\Gamma)\oplus H^{1/2}(\Gamma))\,,\qquad z\in\CO\backslash\RE\,.
$$ 
$\Delta_{\widetilde\Lambda^{b}}$ is a self-adjoint representation of the (bounded from above, this follows proceeding as in \cite[page 1480]{JST}) Laplacian on $\RE^{3}\backslash\overline\Sigma$ with boundary conditions  at $\Sigma$ given by 
$$
\begin{cases}
R_{\Sigma}\gamma_{0}u=b^{\Sigma}_{11}R_{\Sigma}[\gamma_{0}]u+b^{\Sigma}_{12}R_{\Sigma}[\gamma_{1}]u\,,\quad\\
R_{\Sigma}\gamma_{1}u=(b^{\Sigma}_{12})^{*}R_{\Sigma}[\gamma_{0}]u+b^{\Sigma}_{22}R_{\Sigma}[\gamma_{1}]u\,.
\end{cases}
$$ 
By \cite[Theorem 2.19]{CFP}, the map $z\mapsto \widetilde \Lambda_{z}^{b}$ and the resolvent formula \eqref{resolvent} extend to $Z_{\widetilde\Lambda^{b}}:=\rho(\Delta_{\widetilde\Lambda^{b}})\cap \CO\backslash(-\infty,0]$. Since $\sigma_{\rm p}^{-}(\Delta_{\widetilde \Lambda^{b}})$ is empty (see \cite[Theorem 3.7]{JST}), 
$$
\left[\begin{matrix} b^{\Sigma}_{11}+ R_{\Sigma}\gamma_{0}\SL^{+}_{\lambda}R_{\Sigma}^{*}&b^{\Sigma}_{12}+ R_{\Sigma}\gamma_{0}\DL^{+}_{\lambda}R_{\Sigma}^{*}\\
(b^{\Sigma}_{12})^{*}+ R_{\Sigma}\gamma_{1}\SL^{+}_{\lambda}R_{\Sigma}^{*}&b^{\Sigma}_{22}+ R_{\Sigma}\gamma_{1}\DL^{+}_{\lambda}R_{\Sigma}^{*}\end{matrix}\right]
^{-1}\in\B(L^{2}(\Sigma)\oplus H^{-1/2}(\Sigma), L^{2}_{\overline\Sigma}(\Gamma)\oplus H_{\overline\Sigma}^{1/2}(\Gamma))$$ exists  for any $\lambda\in(-\infty,0)$ by Theorem \ref{LH}. Therefore Theorem \ref{screen} applies to $
F^{\widetilde\Lambda^{b}}_{\lambda}$, $\lambda\in(-\infty,0)$.


\end{document}